\documentclass[11pt]{article}

\usepackage[a4paper,margin=1.1in]{geometry}
\usepackage{setspace}
\usepackage[english]{babel}
\usepackage[T1]{fontenc}
\usepackage{lmodern}
\usepackage{microtype}

\usepackage{amsmath,amssymb,amsthm}
\usepackage{mathtools}
\usepackage{mathtools}
\usepackage{mathrsfs}
 \usepackage{pgf,tikz}
 \usetikzlibrary{automata}
 \usetikzlibrary{arrows}
\usepackage{blkarray}
\usepackage[numbers]{natbib}

\usepackage[hidelinks]{hyperref}

\usepackage{fancyhdr}
\theoremstyle{plain}
\newtheorem{theorem}{Theorem}[section]
\newtheorem{lemma}[theorem]{Lemma}
\newtheorem{proposition}[theorem]{Proposition}
\newtheorem{corollary}[theorem]{Corollary}
\newtheorem{problem}[theorem]{Problem}
\theoremstyle{definition}
\newtheorem{definition}[theorem]{Definition}
\newtheorem{example}[theorem]{Example}

\newtheorem*{mtheorem}{Main Theorem}

\theoremstyle{remark}
\newtheorem{remark}[theorem]{Remark}

\title{\textbf{On the non-existence of certain maximizing curves of degree $7$}}
\author{
   Łukasz Merta, Marcin Zieli\'nski
}
\date{\today}

\begin{document}
\maketitle

\begin{abstract}
In this paper we show there does not exist any arrangement of degree $7$ consisting of $k\geq 1$ conics and $d\geq 1$ lines admitting certain naturally chosen ${\rm ADE}$-singularities. 
\end{abstract}

\noindent\textbf{2020 Mathematics Subject Classification.}
14H20, 14H50, 14N20.

\medskip

\noindent\textbf{Keywords:}
Maximizing curves, conic-line arrangements,
ADE singularities, free curves, Tjurina number.

\section{Introduction}\label{s:introduction}
In this paper, we study the geometry of maximizing plane curves of degree $7$. From a historical perspective, the geometry of maximizing curves of even degree is a classical topic in the theory of algebraic surfaces, dating back to the work of Persson \cite{Persson}, who investigated the existence of such curves in connection with the construction of algebraic surfaces with large Picard numbers. More recently, this class of curves has been studied by Dimca and Pokora \cite{DimPok} in the context of so-called free curves. They showed that maximizing curves of even degree are free and attain the maximal possible global Tjurina number for a given degree. Another contribution of Dimca and Pokora is the introduction of a new class of curves, namely maximizing curves of odd degree, which we now define. Let $S = \mathbb{C}[x,y,z]$ be the coordinate ring of $\mathbb{P}^{2}_{\mathbb{C}}$ and for a polynomial $f \in S_{d}$ we define by $J_{f}$ the Jacobian ideal, i.e., the ideal generated by the partials $f_x, f_y, f_z$.
\begin{definition}
Let $C\subset \mathbb{P}^{2}_{\mathbb{C}}$ be a reduced curve of odd degree $d=2m+1$ with $m \geq 2$. Assume that $C$ admits only ${\rm ADE}$-singularities. We say that $C$ is maximizing curve if
$${\rm deg}(J_{f}) = 3m^{2}+1.$$
\end{definition}
\noindent
It is worth noting here that for reduced plane curves one has
$${\rm deg}(J_{f})= \tau(C),$$ where $\tau(C)$ is the global Tjurina number of $C$ which is defined as $\tau(C) = \sum_{p \in {\rm Sing}(C)}\tau_{p}$ with ${\rm Sing}(C)$ being the set of all singular points of $C$. It turns out, perhaps surprisingly, that maximizing plane curves of odd degree are extremely rare. In degree $d=5$, maximizing curves are completely classified: there are exactly four such curves, a result that follows as a straightforward consequence of the work of C.\,T.\,C.~Wall \cite[Section 5]{Wall}. In degree $d=7$, only two examples are currently known; both arise from very special curve arrangements \cite[Example 5.10]{DimPok}. For degree $d=9$, we are aware of just a single example of a maximizing curve \cite[Example 6.4]{DIPS}.  

Taken together, these observations suggest a striking scarcity of maximizing curves in odd degrees 
and naturally lead to the following research problem.
\begin{problem}
Construct examples of maximizing curves of odd degree $d \geq 11$.
\end{problem}
Here, we do not address the above problem. Instead, we return to the case of degree $d=7$ in order to understand the geometric constraints governing the simplest possible maximizing curve of degree $7$. The simplest possible scenario is to study maximizing line arrangements $\mathcal{L}$, i.e., these are arrangements admitting only double and triple intersection points. However, if $\mathcal{L}$ is maximizing, then ${\rm deg}(\mathcal{L}) \leq 6$. This means that, to construct maximizing curves of degree seven, we must consider arrangements consisting of more than just lines. This suggests considering arrangements that also contain conics. It turns out that there is one particularly nice maximizing curve of degree seven, namely Persson's arrangement consisting of $k=3$ conics and one line and it has two $D_{10}$ singularities, two $A_{1}$ singularities, and one $D_{6}$ singularity. On the other hand, by \cite[Theorem 3.1]{JanLes}, if $C\subset \mathbb{P}^{2}_{\mathbb{C}}$ is a reduced plane curve of degree $d=7$ admitting only $A_{t}$ singularities with $t \leq 12$, $D_l$ singularities for $l \leq 7$, $E_6$ singularities, then $C$ is never free. This suggests to formulate the following problem.
\begin{problem}
Is it possible to construct a maximizing conic-line arrangement of degree seven admitting, at worst, $A_{t}$ singularities with $t \in \{1, 3, 5, 7\}$ and $D_{l}$ singularities with $l \in \{4,6,8\}$?
\end{problem}

\noindent The main result of our paper is a \textbf{negative answer} to the above problem, see \hyperref[maintheorem]{Main Theorem}.

\vspace{8pt}
Here is the outline of our paper. We start with algebraic and geometric preliminaries devoted to tools and techniques that are used in the world of free and maximizing curves. Then we present our proof. Since it is a technical work, the proof will be divided into several steps, but we hope that our approach will allow us to keep maximal clarity of the presentation.
\section{Algebraic and geometric preliminaries}
We start with algebraic preliminaries. As indicated in the Introduction, we denote by $S$ the ring of polynomials in the three variables $x,y,z$ with complex coefficients. We will need the following important invariant that is attached to the syzygies of $J_{f}$.
\begin{definition}
Consider the graded $S$-module of Jacobian syzygies of $f$, namely $$AR(f)=\{(a,b,c)\in S^3 : af_x+bf_y+cf_z=0\}.$$
The minimal degree of non-trivial Jacobian relations for $f$ is defined as
$${\rm mdr}(f):=\min\{r : AR(f)_r\neq 0\}.$$ 
\end{definition}

Now we can introduce the notion of a free curve.
\begin{definition}
We say that a reduced curve $C = \{f = 0\}$ in the complex projective plane is \textbf{free} if the Jacobian ideal $J_{f}$ associated with $f$ is saturated with respect to the irrelevant ideal $\mathfrak{m} = \langle x, y, z \rangle$. 
\end{definition}
\begin{remark}
Using the language of modules, the freeness of $C = \{ f = 0 \}$ is equivalent to the fact that
$\mathrm{AR}(f)$ is a free $S$-module of rank $2$.
\end{remark}
We have the following nice criterion that focuses on the total Tjurina number and the minimal degree of the Jacobian relations \cite{duP}.
\begin{theorem}[Freeness criterion]
Let $C = \{ f=0 \}$ be a reduced curve in $\mathbb{P}^{2}_{\mathbb{C}}$ of degree $d$. Then the curve $C$  is free if and only if $r = {\rm mdr}(f)\leq (d-1)/2$ and
\begin{equation}
\label{duPles}
(d-1)^{2} - r(d-r-1) = \tau(C).
\end{equation}
\end{theorem}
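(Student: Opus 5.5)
The plan is to work with the sheafified syzygy module and reduce both directions to a Chern class computation on a rank $2$ vector bundle. Let $T$ be the kernel of the sheaf map $\mathcal{O}^3_{\mathbb{P}^2}\to\mathcal{O}_{\mathbb{P}^2}(d-1)$, $(a,b,c)\mapsto af_x+bf_y+cf_z$. Its image is $\mathcal{I}_\Sigma(d-1)$, where $\Sigma$ is the zero-dimensional Jacobian scheme of $C$ (reducedness of $C$ makes the singular locus finite). The length of $\Sigma$ is $\tau(C)$. Since $T$ is the kernel of a map from a locally free sheaf to a torsion-free sheaf, it is reflexive, hence locally free of rank $2$ on $\mathbb{P}^2$. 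We also have $H^0_*(T)=\bigoplus_r \Gamma(T(r))=AR(f)$.

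The first step is the Chern classes of $T$. From $0\to T\to\mathcal{O}^3\to\mathcal{I}_\Sigma(d-1)\to 0$ we get $c_1(T)=-(d-1)$. Using $c(\mathcal{I}_\Sigma(d-1))=1+(d-1)h+\tau h^2$, we get $c_2(T)=(d-1)^2-\tau(C)$.

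\emph{Forward direction.} Suppose $C$ is free, so $AR(f)\cong S(-d_1)\oplus S(-d_2)$ with $d_1\le d_2$. Then $r=d_1$, and the resolution of $S/J_f$ is
\[0\to S(-d_1-(d-1))\oplus S(-d_2-(d-1))\to S(-(d-1))^3\to S.\]
Because $J_f$ has rank $3$ in the relevant sense, the rank and degree bookkeeping forces $d_1+d_2=d-1$; equivalently, $T\cong\mathcal{O}(-d_1)\oplus\mathcal{O}(-d_2)$ with $c_1=-(d-1)$. Then $r=d_1\le (d-1)/2$. Moreover $c_2(T)=d_1d_2=r(d-1-r)$, which gives $\tau(C)=(d-1)^2-r(d-r-1)$.

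\emph{Converse.} Take a nonzero syzygy $\rho\in AR(f)_r$, i.e. a section $s\in H^0(T(r))$. The key point, and the one I expect to need care, is that $s$ vanishes only in codimension $2$. Suppose instead that all three components of $\rho$ were divisible by a form $g$ of positive degree $e$. Then $\rho/g$ would be a nonzero syzygy of degree $r-e$, contradicting minimality of $r$. So the zero scheme $Z$ of $s$ is finite, and we get the Koszul-type sequence
\[0\to\mathcal{O}(-r)\to T\to\mathcal{I}_Z(r-(d-1))\to 0.\]
Comparing second Chern classes gives
\[c_2(T)=-r\,(r-(d-1))+\operatorname{len}Z=r(d-1-r)+\operatorname{len}Z.\]
Hence
\[\tau(C)=(d-1)^2-r(d-r-1)-\operatorname{len}Z.\]
Under the hypothesis that equality holds in the displayed formula, we get $Z=\emptyset$. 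The extension then lies in
\[\operatorname{Ext}^1(\mathcal{O}(r-d+1),\mathcal{O}(-r))=H^1(\mathcal{O}(d-1-2r))=0,\]
so $T\cong\mathcal{O}(-r)\oplus\mathcal{O}(r-d+1)$. Taking $H^0_*$ shows that $AR(f)\cong S(-r)\oplus S(-(d-1-r))$ is free, so $C$ is free by the Remark.

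The hypothesis $r\le(d-1)/2$ serves only to make $r$ the smaller of the two exponents, consistent with $r={\rm mdr}(f)$. As a byproduct, the argument gives the du Plessis--Wall upper bound $\tau(C)\le (d-1)^2-r(d-r-1)$ in general. The main obstacles are verifying local freeness and the Chern class identity $c_2(T)=(d-1)^2-\tau$, and checking that minimality of $r$ really yields a codimension-$2$ zero locus.
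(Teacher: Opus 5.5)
The paper gives no proof of this statement; it only quotes it from du Plessis--Wall. Your proposal therefore stands on its own, and it is correct. It is the standard vector-bundle proof: identify $AR(f)=H^0_*(T)$ for the rank-$2$ bundle $T$, compute $c_1(T)=-(d-1)$ and $c_2(T)=(d-1)^2-\tau(C)$, and compare with the Koszul sequence of a syzygy of minimal degree $r$.

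Compared with citing the literature, your route gives a self-contained argument that proves more than is stated:
\begin{itemize}
\item it yields the du Plessis--Wall upper bound $\tau(C)\le (d-1)^2-r(d-r-1)$, with the defect equal to exactly $\operatorname{len} Z$;
\item it shows that the hypothesis $r\le (d-1)/2$ is automatic in the converse, since once $T\cong\mathcal{O}(-r)\oplus\mathcal{O}(r-d+1)$, minimality of $r$ forces $r\le d-1-r$.
\end{itemize}

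Two steps each need one more line.

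In the forward direction, replace the vague ``$J_f$ has rank $3$ in the relevant sense'' by the precise reason you give right after it. Since $T$ is locally free with $H^0_*(T)=AR(f)$, you have $T=\widetilde{AR(f)}\cong\mathcal{O}(-d_1)\oplus\mathcal{O}(-d_2)$, and then $c_1(T)=-(d-1)$ gives $d_1+d_2=d-1$.

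In the converse, after showing that $a,b,c$ have no common factor, note explicitly that $Z\subseteq V(a,b,c)$. This holds because the fibre map $T(r)\otimes k(x)\to k(x)^3$ sends $s(x)$ to $(a(x),b(x),c(x))$. Only with this inclusion does $\gcd(a,b,c)=1$ imply that $Z$ is finite, which is what the Koszul sequence requires.
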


Next, let us recall the classification of ${\rm ADE}$ singularities for curves by presenting their local normal forms captured from \cite{arnold}, namely
\begin{center}
\begin{tabular}{ll}
$A_{k}$ with $k\geq 1$ & $: \, x^{2}+y^{k+1}  = 0$, \\
$D_{k}$ with $k\geq 4$ & $: \, y^{2}x + x^{k-1}  = 0$,\\
$E_{6}$ & $: \, x^{3} + y^{4} = 0$, \\
$E_{7}$ & $: \, x^{3} + xy^{3} = 0$, \\
$E_{8}$ & $:\, x^{3}+y^{5} = 0$.
\end{tabular}
\end{center}
All reduced plane curves that admit only ${\rm ADE}$ singularities are called \textbf{simply singular curves} in the literature. Now we define our main object of study.
\begin{definition}
Let $C = \{ f=0 \}$ be a reduced simply singular curve in $\mathbb{P}^{2}_{\mathbb{C}}$ of \textbf{odd degree} $n=2m+1\geq 5$. We say that $C$ is a \textit{maximizing} curve if 
$$\tau(C) = 3m^{2}+1.$$
\end{definition}
\begin{remark}
It is worth recalling that the property of being a maximizing curve $C = \{f=0\}$ of degree $2m+1$ implies that this curve is free. Moreover, the maximality of $C$ implies that ${\rm mdr}(f)=m-1$ (see \cite{DimPok}).
\end{remark}
\begin{remark}
Observe that arrangements of lines and conics do not produce irreducible singularities
such as \(A_{2}\), \(A_{4}\), \(A_{6}\), \(E_{6}\), or \(E_{8}\), our choice of
singularities is minimal within this class of curves. Based on this observation, we will say that a reduced plane curve $C$ has only ${\rm AD}$-singularities meaning that $C$ does not admit $E_{j}$-singular points.
\end{remark}
Now we introduce the notion of a \emph{weak combinatorial type}, the concept of \emph{combinatorics} for plane curves, and the so-called \emph{B\'ezout constructability property} for a reduced plane curve $C$. Let us now explain the general principles behind these notions.

\begin{itemize}
    \item A \emph{weak combinatorial type} captures the basics of a curve without keeping track of all detailed geometric information.
    \item The \emph{combinatorics} of a curve describes how its irreducible components intersect and how singular points are distributed, together with decorations coming from the intersection indices. 
    \item \emph{B\'ezout constructability property} is a condition that ensures certain configurations of points and intersections on $C$ are possible, consistent with B\'ezout's Theorem.
\end{itemize}
\begin{definition}
Let $C = \{C_{1}, \ldots, C_{k}\}$ be a reduced plane curve whose irreducible components are all smooth.
We define the \emph{weak combinatorial type} of $C$ as
\[
C(d_{1}, \ldots, d_{s}; m_{1}, \ldots, m_{p}),
\]
where $d_{j}$ denotes the number of irreducible components of $C$ of degree $j$, and
$m_{i}$ denotes the number of singular points of $C$ having a fixed topological type $M_{i}$.
\end{definition}
\begin{definition}
\label{cs}
Let $C = \{C_{1}, \ldots, C_{k}\}$ be a reduced plane curve whose irreducible components are all smooth.
\begin{enumerate}
\item[(a)]
The \textbf{combinatorics} $CS$ for $C$ is defined as a
\textbf{decorated Levi graph}, that is, a bipartite graph with the following structure:
\begin{itemize}
\item Vertices $x_{1}, \ldots, x_{k}$ correspond to the irreducible components $C_{1}, \ldots, C_{k}$.
Each vertex $x_{i}$ is assigned the weight $\deg(C_{i})$.
\item Vertices $y_{1}, \ldots, y_{s}$ correspond to the intersection points of $C$.
\item A vertex $y_{j}$ is incident to a vertex $x_{i}$ if and only if the point $p_{j} \in C_{i}$.
\item Each vertex $y_{j}$ is decorated by the collection of intersection numbers
\[
\bigg\{\, i_{p_{j}}(C_{i_{1}}, C_{i_{2}}) \,\bigg\},
\]
where the indices $i_{1} < i_{2}$ range over all pairs such that
$p_{j} \in C_{i_{1}} \cap C_{i_{2}}$.
\end{itemize}

\item[(b)]
We say that the curve $C$, endowed with the combinatorics $CS$, is
\textbf{B\'ezout constructible} if for every pair of indices
$1 \leq i_{1} < i_{2} \leq k$ one has
\[
\deg(C_{i_{1}})\cdot \deg(C_{i_{2}})
=
\sum_{p \in \mathrm{Sing}(C)}
i_{p}(C_{i_{1}}, C_{i_{2}}).
\]
\end{enumerate}
\end{definition}
\begin{remark}
\begin{enumerate}
\item In the setting of Definition~\ref{cs}, the B\'ezout constructability property is combinatorial since it can be checked using only the data contained in the combinatorics~$CS$, i.e., the data encoded in the decorated Levi graph.
\item The fact that a given combinatorics $CS$ satisfies the B\'ezout constructability property does not necessarily imply that $CS$ is geometrically realizable via a plane curve in $\mathbb{P}^{2}_{\mathbb{C}}$.
\end{enumerate}
\end{remark}
\begin{example}
Let us consider the following combinatorial situation: we have three smooth conics $C_{1}, C_{2}, C_{3}$ such that they have three singular points of type $A_{3}$ and two points of type $A_{5}$. Observe that this combinatorics is not B\'ezout constructible since there always exists (after a suitable relabeling of indices indicating curves and singular points) a situation in which
$$4 = {\rm deg}(C_{1}) \cdot {\rm deg}(C_{2}) \neq \sum_{p \in {\rm Sing}(C)} i_p(C_1,C_2) = i_{p_{1}}(C_{1},C_{2}) + i_{p_{2}}(C_{1},C_{2}) = 2+3 = 5,$$
meaning that $p_{1} \in C_{1}\cap C_{2}$ is an $A_{3}$-singularity and $p_{2} \in C_{1} \cap C_{2}$ is an $A_{5}$-singularity.
\end{example}
The following definition will be crucial in our combinatorial considerations.
\begin{definition}\label{def: PIM}
Let $C=\{C_1,\ldots,C_k\}\subset \mathbb{P}^2_{\mathbb{C}}$
be a reduced plane curve whose irreducible components $C_i$ are smooth of degree $\deg(C_i)=d_i$.
Assume that $C$ has only $\mathrm{AD}$-singularities. Let
\[
\mathrm{Sing}(C)=\{p_1,\ldots,p_s\}
\]
be the set of singular points. To each singular point $p_j$ we associate its topological type $W_j$ with $W \in \{A,D\}$.

We define the \emph{pair--incidence matrix} of $C$, denoted by $\mathrm{PIM}(C)$, as the matrix with $\binom{k}{2}$ rows and $s$ columns defined as follows:
\begin{itemize}
    \item the rows are indexed by  pairs $C_aC_b$ with $a<b$.
    \item the columns are indexed by the singular points $p_j$ (labeled by their type $W_j$).
\end{itemize}
The entry corresponding to the pair $(C_a,C_b)$ and the point $p_j$ is given by
\[
\mathrm{PIM}(C)_{(a,b),j}
=
i_{p_j}(C_a,C_b),
\]
the local intersection multiplicity of $C_a$ and $C_b$ at $p_j$.
\end{definition}

\begin{remark}
    The arrangement $C = \{C_1, \dots, C_k\} \subset \mathbb{P}^2_{\mathbb{C}}$ with $\mathrm{Sing}(C)=\{p_1,\ldots,p_s\}$ has the B\'ezout constructability property if and only if in the $\mathrm{PIM}(C)$ matrix
    \begin{itemize}
        \item sum of values in the row labeled by $C_aC_b$ is equal to $\deg(C_a)\cdot\deg(C_b)$ for each $a < b$,
        \item for each column corresponding to a given singularity, the nonzero entries are determined by the type of the singularity. For example, a singularity of type $D_{8}$ has three smooth branches, which intersect pairwise with multiplicities $1$, $1$, and $3$. Consequently, these are precisely the values that must occur as the nonzero entries in the corresponding column.
    \end{itemize}
\end{remark}

\begin{example}
Let us consider the following arrangment of three smooth conics $C = \{C_{1}$, $C_{2}, C_{3}\}$ having exactly four singular points, namely $p_{1}, p_{2}, p_{3}$ are $A_{5}$-singularities and $p_{4}$ is a $D_{4}$-singularity. The corresponding ${\rm PIM}(\mathcal{C})$ matrix has the following shape.
\[
\begin{blockarray}{ccccc}
 & A_{5} & A_{5} & A_5 & D_4\\
\begin{block}{c|cccc|}
  C_{1}C_{2} & 3 & 0 & 0 & 1 \\
  C_{1}C_{3} & 0 & 3 & 0 & 1 \\
  C_{2}C_{3} & 0 & 0 & 3 & 1 \\
\end{block}
\end{blockarray}.
 \]
We can check that the B\'ezout constructability property holds for~$C$. Moreover, this arrangement is geometrically constructible over the complex numbers; see \cite[Remark~4.2.3]{Celal}.
\end{example}
\noindent
We now focus on the class of curves that will be studied in this work. 
Specifically, we consider plane curves of degree $7$ consisting of $k \geq 1$ conics 
and $d \geq 1$ lines. Since our analysis is restricted to a particular class of singularities, we impose the following assumption.
\begin{center}
\textbf{Assumption} ($\square$): our conic-line arrangements admit, at worst, 
$n_{2}$ singularities of type $A_{1}$, $n_{3}$ singularities of type $D_{4}$, 
$t_{p}$ singularities of type $A_{p}$ with $p \in \{3,5,7\}$,\\  and $d_{t}$ 
singularities of type $D_{t}$ with $t \in \{6,8\}$. 
\end{center}
Here we follow a well-established convention that the number of ordinary singularities of multiplicity $r$ is denoted by $n_{r}$. Moreover, observe that arrangements consisting only of conics and lines cannot produce singularities of type $D_{5}$ or $D_{7}$ -- for this reason, such singularities are excluded from our considerations. 
\begin{remark}
In the setting of our paper, which is considerably more restrictive than the general case, the weak combinatorics of a curve $C$ is encoded by a vector of non-negative integers of the form:
$$C(d,k;n_{2},n_{3},t_{3},t_{5},t_{7},d_{6},d_{8}).$$
\end{remark}
Next, we present main technical tools that will help us to exclude certain combinatorial types of curves. First, let us recall the following result \cite[Theorem A.1]{IC}.
\begin{theorem}[Pokora]
\label{HP}
Let $\mathcal{C} \subset \mathbb{P}^{2}_{\mathbb{C}}$ be an arrangement consisting of $k\geq 1$ smooth conics and $d\geq 1$ lines with ${\rm deg}(\mathcal{C}) = 2k+d \geq 7$. Assume that $\mathcal{C}$ admits singularities no worse than those in Assumption~$(\square)$. Then one has
\begin{equation}
\label{hib}
8k + n_{2} + \frac{3}{4}n_{3} \geq d + \frac{5}{2}t_{3} + 5t_{5} + \frac{29}{4}t_{7} +\frac{13}{8} d_{6} + \frac{15}{4} d_{8}.
\end{equation}
\end{theorem}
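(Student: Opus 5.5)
The plan is to derive \eqref{hib} from Langer's orbifold Bogomolov--Miyaoka--Yau inequality for log pairs $(\mathbb{P}^{2}_{\mathbb{C}},\alpha\mathcal{C})$, in the version used in earlier works of Pokora. For a reduced curve $\mathcal{C}$ of degree $n$ such that the pair is log canonical and $K_{\mathbb{P}^2}+\alpha\mathcal{C}$ is effective, it reads
\[
\sum_{p\in{\rm Sing}(\mathcal{C})} 3\Big(\alpha(\mu_{p}-1)-1+e_{orb}(p;\mathbb{P}^{2}_{\mathbb{C}},\alpha\mathcal{C})\Big)\leq (3\alpha-\alpha^{2})n^{2}-3\alpha n ,
\]
where $\mu_p$ is the Milnor number and $e_{orb}$ is the local orbifold Euler number.

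I would take $\alpha=\tfrac12$. With $n=2k+d\geq 7$, the divisor $K+\tfrac12\mathcal{C}$ has degree $n/2-3>0$, so it is effective. Log canonicity follows from the log canonical thresholds of the allowed singularities: ${\rm lct}(A_{p})=\tfrac12+\tfrac1{p+1}$ and ${\rm lct}(D_{t})=\tfrac{t}{2t-2}$. Both are at least $\tfrac12$ for all types in Assumption~$(\square)$.

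\textbf{Step 1: local contributions.} For each type I compute $\mu_p$ and $e_{orb}(p)$ at $\alpha=\tfrac12$, using Langer's formulas.
\begin{itemize}
\item For an ordinary $r$-fold point, $e_{orb}=(2-r\alpha)^2/4$. This gives $\tfrac14$ for $A_1$ and $\tfrac1{16}$ for $D_4$.
\item For $A_{2s-1}$ (two smooth branches tangent to order $s$, here $s=2,3,4$), $\alpha=\tfrac12$ lies strictly below the threshold. I would use Langer's quasi-homogeneous formula $e_{orb}=\frac{(w_1+w_2-\alpha\deg)^2}{4w_1w_2}$ with weights read off from $x^2+y^{2s}$. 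Equivalently, I would compute it via a weighted blow-up.
\item For $D_6$ and $D_8$ (a line or conic branch transversal to an $A_3$, resp.\ $A_5$, pair), the same quasi-homogeneous formula applies to $y^2x+x^{t-1}$.
\end{itemize}
This step is the main obstacle. The values must be exact, since the coefficients $\tfrac52,5,\tfrac{29}4,\tfrac{13}8,\tfrac{15}4$ are very sensitive to them. I would double-check each value against the known formulas for the $e_{orb}$ of ADE points.

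\textbf{Step 2: global combinatorial identity.} To eliminate $n^2$, I count pairwise intersections via B\'ezout. The total number of intersections, with multiplicity, is
\[
4\binom{k}{2}+2kd+\binom{d}{2}.
\]
It must equal the sum of the pairwise local intersection numbers:
\begin{itemize}
\item $A_1$ contributes $1$ and $D_4$ contributes $3$;
\item $A_3,A_5,A_7$ contribute $2,3,4$;
\item $D_6$ contributes $1+1+2=4$ and $D_8$ contributes $1+1+3=5$.
\end{itemize}
Expanding $n^2=(2k+d)^2$ then expresses it as $4k+d$ plus twice this local sum. Hence the right-hand side $\tfrac54n^2-\tfrac32n$ becomes a linear expression in $k,d$ and the singularity counts $n_2,n_3,t_i,d_j$.

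\textbf{Step 3: assemble.} I substitute the local data from Step 1 and the identity from Step 2 into the inequality, multiply through by a suitable constant, and collect terms. The $k$- and $d$-terms should combine to $8k-d$ on one side. Moving the $A_1,D_4$ contributions to the left and the remaining types to the right should yield exactly \eqref{hib}. If the constants do not match on a first attempt, the source is almost certainly a miscomputed $e_{orb}$ in Step 1, so I would recheck those values before anything else.
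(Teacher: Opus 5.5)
Your route (Langer's orbifold Bogomolov--Miyaoka--Yau inequality at $\alpha=\tfrac12$, local orbifold Euler numbers from the quasi-homogeneous formula, B\'ezout to linearize $n^2$) is the argument that produces exactly these coefficients. The paper does not reprove the result; it only quotes it from Pokora.

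There is, however, a sign error in the inequality you start from. The local term must read
\[
\sum_{p\in{\rm Sing}(\mathcal{C})} 3\big(\alpha(\mu_p-1)+1-e_{orb}(p)\big)\le(3\alpha-\alpha^2)n^2-3\alpha n .
\]
To see this, write $e_{orb}(\mathbb{P}^2,\alpha\mathcal{C})=e(\mathbb{P}^2\setminus\mathcal{C})+(1-\alpha)\,e(\mathcal{C}\setminus{\rm Sing}(\mathcal{C}))+\sum_p e_{orb}(p)$. Using $e(\mathcal{C})=3n-n^2+\sum_p\mu_p$, this equals $3-3\alpha n+\alpha n^2-\sum_p\big(\alpha(\mu_p-1)+1-e_{orb}(p)\big)$. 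Inserting this into $(\alpha n-3)^2=(K_{\mathbb{P}^2}+\alpha\mathcal{C})^2\le 3e_{orb}(\mathbb{P}^2,\alpha\mathcal{C})$ gives the display.

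Since $e_{orb}(p)<1$, your version is a strictly weaker statement, and Step 3 cannot produce \eqref{hib} from it. For example, an $A_1$ point would contribute $-\tfrac94$ instead of $\tfrac94$, which yields $19n_2$ instead of $n_2$. In fact all seven singularity counts would end up with positive coefficients on the same side as $8k$, so the result is useless. Your planned remedy of rechecking the $e_{orb}$ values would not find the problem, because those values are not where it lies.

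With the sign corrected, the plan closes exactly. Take $\alpha=\tfrac12$, with weights $(\tfrac12,\tfrac1{2s})$ for $A_{2s-1}$ and $(\tfrac1{t-1},\tfrac{t-2}{2(t-1)})$ for $D_t$.
\begin{itemize}
\item For $A_1,D_4,A_3,A_5,A_7,D_6,D_8$ one gets $e_{orb}=\tfrac14,\tfrac1{16},\tfrac18,\tfrac1{12},\tfrac1{16},\tfrac1{32},\tfrac1{48}$.
\item The local terms are then $\tfrac94,\tfrac{117}{16},\tfrac{45}8,\tfrac{35}4,\tfrac{189}{16},\tfrac{333}{32},\tfrac{215}{16}$.
\item Your B\'ezout identity turns the right-hand side into $2k-\tfrac14d+\tfrac52(n_2+3n_3+2t_3+3t_5+4t_7+4d_6+5d_8)$.
\end{itemize}
Subtracting and multiplying by $4$ gives precisely \eqref{hib}.

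One further point belongs in Step 1. Log canonicity alone does not license the quasi-homogeneous formula: for $w_1\neq w_2$ it exceeds $1$ as $\alpha\to 0$. The formula is valid for $|w_1-w_2|\le\alpha\le w_1+w_2$, and $\alpha=\tfrac12$ lies in this range for all seven types.
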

\noindent
Finally, let us present our main tool in our considerations that was proved in \cite{JanLes}.
\begin{theorem}
\label{main}
Let $C = \{ f=0 \}$ be a reduced simply singular curve in $\mathbb{P}^{2}_{\mathbb{C}}$ of \textbf{odd degree} $d=2m+1$ for $m \geq 3$ such that $C$ admits
\begin{itemize}
    \item singularities of type $A_k$ for $k \leq 4m$,
    \item singularities of type $D_l$ for $l \leq 2m+1$, and
    \item singularities of type $E_6$.
\end{itemize}
Then $C$ \textbf{cannot be} maximizing.
\end{theorem}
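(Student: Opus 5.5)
The plan is to argue by contradiction, combining the remark after the definition of maximizing curves (a maximizing curve of degree $2m+1$ is free with ${\rm mdr}(f)=m-1$) with a \emph{lower} bound on ${\rm mdr}(f)$ that comes from the mildness of the singularities. This lower bound is not stated earlier in the paper and would be imported: Dimca's theorem that for a reduced plane curve $C=\{f=0\}$ of degree $d$ with only weighted homogeneous singularities one has
\[
{\rm mdr}(f) \geq \alpha_C\cdot d - 2,
\]
where $\alpha_C$ is the minimum over ${\rm Sing}(C)$ of the Arnold exponents (log canonical thresholds) of the singular points. All ${\rm ADE}$ singularities are weighted homogeneous, so the bound applies to $C$. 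If $C$ were maximizing, we would have $m-1 = {\rm mdr}(f) \geq \alpha_C(2m+1)-2$. The goal is to show that the hypotheses force the right-hand side to be strictly larger than $m-1$, which is impossible.

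The main step is the computation of $\alpha_C$ under the assumed singularity types. From the normal forms, the Arnold exponents are:
\begin{itemize}
\item for $A_k$ ($x^2+y^{k+1}$): $\tfrac12+\tfrac1{k+1}$;
\item for $D_l$ ($y^2x+x^{l-1}$, weights $\tfrac1{l-1}$ for $x$ and $\tfrac{l-2}{2(l-1)}$ for $y$): $\tfrac12+\tfrac1{2(l-1)}$;
\item for $E_6$: $\tfrac13+\tfrac14=\tfrac7{12}=\tfrac12+\tfrac1{12}$.
\end{itemize}
Each of these decreases with the index, so the hypotheses $k\le 4m$ and $l\le 2m+1$ give
\[
\alpha_C \geq \tfrac12 + \min\Big\{\tfrac{1}{4m+1},\ \tfrac{1}{4m},\ \tfrac1{12}\Big\}.
\]

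It then remains to check, case by case, that $(2m+1)\varepsilon > \tfrac12$ for each $\varepsilon$ in this minimum. This is what forces $\alpha_C(2m+1)-2 > m+\tfrac12-2+\tfrac12 = m-1$.
\begin{itemize}
\item For $\varepsilon = \tfrac1{4m+1}$: $\tfrac{2m+1}{4m+1}>\tfrac12$ always holds.
\item For $\varepsilon = \tfrac1{4m}$: $\tfrac{2m+1}{4m}>\tfrac12$ always holds.
\item For $\varepsilon = \tfrac1{12}$: $\tfrac{2m+1}{12}>\tfrac12$ is equivalent to $m>\tfrac52$. This is exactly where the assumption $m\geq 3$ is used.
\end{itemize}
In every case we obtain ${\rm mdr}(f) > m-1$. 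Since ${\rm mdr}(f)$ is an integer this means ${\rm mdr}(f)\geq m$, contradicting ${\rm mdr}(f)=m-1$. Hence $C$ cannot be maximizing.

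The main obstacle is the imported inequality ${\rm mdr}(f)\ge \alpha_C d-2$, which rests on Hodge-theoretic or spectral information (Saito's results on the pole order filtration) and is not proved in this paper. If one wanted to avoid it, the fallback would be to use the freeness criterion with exponents $(m-1,m+1)$ together with the du Plessis--Wall bound, and to exploit the degree-$(m-1)$ syzygy as a foliation leaving $C$ invariant, bounding local indices at each ${\rm ADE}$ point. That route seems considerably messier, so I would rely on Dimca's bound and keep the remainder of the proof to the elementary exponent computation above.
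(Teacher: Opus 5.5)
Your proof is correct. It is complete once you grant the inequality ${\rm mdr}(f)\geq \alpha_C\cdot d-2$ for curves with weighted homogeneous singularities. This is the same result the paper itself uses as \cite[Theorem 1.2]{DimSer} in the proof of Corollary~\ref{ell}, so importing it is consistent with the paper's own toolkit.

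The paper gives no proof of this statement; it simply cites \cite{JanLes}. Your proposal therefore supplies a derivation rather than competing with one. It also explains the exact form of the hypotheses. Combined with ${\rm mdr}(f)=m-1$, your argument needs precisely $\alpha_p>\frac{m+1}{2m+1}=\frac12+\frac{1}{4m+2}$ at every singular point $p$. For the three allowed types this condition is:
\begin{itemize}
\item for $A_k$, equivalent to $k\leq 4m$;
\item for $D_l$, equivalent to $l\leq 2m+1$;
\item for $E_6$, equivalent to $m\geq 3$.
\end{itemize}
So the thresholds in the theorem are exactly the sharp ones for this method, which strongly suggests this is the intended argument.

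The same computation shows why $E_7$ ($\alpha=\frac59$) and $E_8$ ($\alpha=\frac{8}{15}$) are excluded when $m=3$. It also shows why $D_8$ is the borderline type: its exponent is $\alpha=\frac47=\frac{m+1}{2m+1}$ for $m=3$, so it alone escapes the theorem and survives into the paper's case analysis (as would $A_{13}$, which Assumption~$(\square)$ does not allow).

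The fallback via foliations you mention is unnecessary.
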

We will use the above results in our considerations. In particular, we will be able to exclude most of admissible weak-combinatorial types in our considerations.

\section{Proof of our main result}
Our first aim is to enumerate all weak combinatorial types of conic–line arrangements realizing the prescribed singularities. This problem reduces to solving the following system of Diophantine equations:
$$
\begin{cases}
n_{2} + 3t_{3} + 4n_{3} + 5t_{5} + 6d_{6} + 7t_{7} + 8d_{8} = \tau(\mathcal{C}) = 28,\\
n_{2} + 2t_{3} + 3n_{3} + 3t_{5} + 4t_{7} + 4d_{6} + 5d_{8} = \binom{7}{2}-k = 21 - k.
\end{cases}
$$
Using a computer-assisted search, we find that there are exactly \textbf{296} (na\"ive) weak-combinatorial types. These are listed in the Appendix at the end of the paper. We can significantly narrow the search space by applying Theorem~\ref{main}. For instance, if a given combinatorics does not contain singularities of type \(D_{8}\), then it cannot correspond to a maximizing curve of degree seven. Our classification is naturally stratified by the number of lines (or conics) that our arrangement can possess.

\paragraph{Scenario 1: five lines and one conic.} 
According to our list in the Appendix, there are exactly $103$ weak combinatorial types of arrangements consisting of one conic and five lines. Using Theorem~\ref{HP} and Theorem~\ref{main}, we can narrow this number down to just $5$ cases, namely:
\begin{multline}
\label{lc3}
C(n_{2},n_{3},t_{3},t_{5},t_{7},d_{6},d_{8}) \in \bigg\{(0,5,0,0,0,0,1), (1,4,1,0,0,0,1), (2,3,0,0,0,1,1), \\(4,1,0,0,0,2,1), (4,2,0,0,0,0,2) \bigg\}.
\end{multline}
The following structural result allows us to exclude all the listed cases in \eqref{lc3}.
\begin{theorem}
\label{s1}
Let $\mathcal{CL}$ be an arrangement of one conic and five lines with singularities no worse than those in Assumption~$(\square)$. Then $\mathcal{CL}$ cannot have $D_{8}$ singularities.
\end{theorem}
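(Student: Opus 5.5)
The plan is to argue directly from the local structure of a $D_{8}$ singularity, combined with B\'ezout's theorem applied to the possible pairs of components. No numerical inequalities such as Theorem~\ref{HP} or Theorem~\ref{main} should be needed.

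\emph{Step 1: local structure of $D_8$.} Start from the normal form $y^{2}x + x^{7} = x\,(y^{2}+x^{6}) = 0$. This factors into three smooth branches: $x=0$, $y = ix^{3}$ and $y=-ix^{3}$. Computing local intersection numbers, the branch $x=0$ meets each of the other two with multiplicity $1$. The two branches $y=\pm i x^{3}$ meet with multiplicity $3$, since the difference $2ix^{3}$ vanishes to order $3$. Being an intersection multiplicity, this number is a topological invariant of the singularity, as already noted in the remark after Definition~\ref{def: PIM}. Hence in any arrangement $\mathcal{CL}$, a $D_{8}$ point is a point where exactly three components pass, each smooth there, and two of them meet at that point with $i_{p}=3$.

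\emph{Step 2: which pairs of components can give $i_p=3$.} In $\mathcal{CL}$ each branch at $p$ is a whole component: a line, or the unique smooth conic $Q$. There are two kinds of pairs.
\begin{itemize}
\item Two distinct lines meet transversally, so $i_{p}(L_{1},L_{2})=1$.
\item A line $L$ and the conic $Q$ satisfy $\sum_{p} i_{p}(L,Q) = 2$ by B\'ezout, so $i_{p}(L,Q)\leq 2$.
\end{itemize}
Pairs of two conics do not occur, because there is only one conic. Thus every entry of $\mathrm{PIM}(\mathcal{CL})$ is at most $2$. A $D_{8}$ column, however, must contain the entry $3$, so no $D_{8}$ point can exist.

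\emph{Conclusion and main obstacle.} The argument only requires that no pair of components has local intersection number at least $3$. It therefore also rules out $A_{5}$ and $A_{7}$ points, and it shows that the only way to reach multiplicity $3$ is through two conics. The one point needing care is Step~1: I must justify that the partition of the branches of $D_{8}$ and their pairwise intersection numbers $\{1,1,3\}$ are analytic invariants, independent of the chosen coordinates. This is standard, since the branches and their intersection multiplicities are preserved under local analytic isomorphism.

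Applying the result to the list~\eqref{lc3}: every case there has $d_{8}\geq 1$, so all five weak combinatorial types are excluded.
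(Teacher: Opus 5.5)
Your proof is correct and is essentially the paper's argument: a $D_{8}$ point needs three smooth branches with pairwise intersection numbers $1,1,3$, and B\'ezout (two lines meet transversally, and a line meets the single conic with total multiplicity $2$) rules out any local intersection number $3$ in an arrangement of lines and one conic. Your extra steps only make explicit what the paper leaves implicit: the branch computation from the normal form, and the observation that smoothness of the components makes each branch a distinct component.
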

\begin{proof}
Although this argument may be well-known to experts, we briefly include a proof for completeness.
Working with our restricted class of ${\rm ADE}$-singularities, each singular point
$p \in \mathcal{CL}$ can be locally decomposed into pairs of smooth curves intersecting at $p$
with prescribed intersection indices $i_p$.
To obtain a singularity of type $D_{8}$, one needs three smooth curves,
say $\{C_1,C_2,C_3\}$, such that, up to relabeling,
\[
i_p(C_1,C_2)=1,\qquad i_p(C_2,C_3)=1,\qquad i_p(C_1,C_3)=3.
\]
Since $\mathcal{CL}$ consists only of lines and one conic, B\'ezout’s Theorem rules out the existence
of any intersection point with the intersection index equal to $3$, which completes the proof.
\end{proof}
This observation allows us to conclude this paragraph by the following.

\begin{proposition}\label{result1}
There is no maximizing arrangement of one conic and five lines admitting the singularities listed in Assumption~$(\square)$.
\end{proposition}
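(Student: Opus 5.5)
The plan is to combine the enumeration of weak combinatorial types with the elimination tools already stated, and then apply Theorem~\ref{s1}. Suppose, to the contrary, that $\mathcal{CL}$ is a maximizing arrangement of one conic and five lines with singularities as in Assumption~$(\square)$. Then $\deg(\mathcal{CL})=7$, so $m=3$ and $\tau(\mathcal{CL})=3m^{2}+1=28$. Since every singularity allowed by $(\square)$ is quasi-homogeneous, its Tjurina number equals its Milnor number. We therefore get the first Diophantine equation in the displayed system. The second equation is the standard genus/Milnor count for an arrangement of $k=1$ smooth conic and $d=5$ lines: it expresses $\sum_p(\mu_p+r_p-1)/2$ in terms of the number of components and their degrees. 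Together, the two equations force the weak combinatorial type $C(5,1;n_{2},n_{3},t_{3},t_{5},t_{7},d_{6},d_{8})$ to lie among the $103$ solutions with $k=1$ listed in the Appendix.

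Next I would discard most of these $103$ types in two passes. First, I would apply Theorem~\ref{main} with $m=3$. Its hypotheses allow $A_k$ for $k\le 12$ and $D_l$ for $l\le 7$, so any type with $d_{8}=0$ satisfies them and cannot be maximizing. This leaves only the types with $d_{8}\ge 1$. Second, I would substitute $k=1$, $d=5$ into the inequality \eqref{hib} of Theorem~\ref{HP}, namely
\[
3+n_{2}+\tfrac34 n_{3}\ \ge\ \tfrac52 t_{3}+5t_{5}+\tfrac{29}{4}t_{7}+\tfrac{13}{8}d_{6}+\tfrac{15}{4}d_{8},
\]
and remove every type that violates it. This routine finite check should leave exactly the five types in \eqref{lc3}.

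Each of the five surviving types has $d_{8}\ge 1$. By Theorem~\ref{s1}, however, an arrangement of one conic and five lines cannot have a $D_{8}$ point at all. The reason is that a $D_{8}$ point needs two of its branches to meet with local intersection index $3$, while two lines meet with index $1$ and a line meets the conic with total index $2$. So none of the five types is realizable, and no maximizing arrangement of this kind exists.

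The only genuine obstacle is the bookkeeping in the second step: checking that the filter through Theorems~\ref{main} and~\ref{HP} really reduces the Appendix list to \eqref{lc3}. I would organize this by $d_{8}\in\{1,2,3\}$, note that the first equation bounds $d_{8}\le 3$, and verify the inequality case by case. Strictly speaking, this step is not needed for the conclusion. Once Theorem~\ref{main} forces $d_{8}\ge 1$, Theorem~\ref{s1} alone already excludes every remaining type, so any miscount in the Theorem~\ref{HP} filter cannot affect the result.
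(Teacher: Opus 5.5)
Your proof is correct and follows the paper's route: you restrict to the Appendix types, use Theorem~\ref{main} (with Theorem~\ref{HP} as an optional filter) to force $d_{8}\geq 1$, and then rule out every $D_{8}$ point via Theorem~\ref{s1}. Your remark that the Theorem~\ref{HP} filter is dispensable is also right, and the enumeration itself is too, since Theorem~\ref{s1} together with Theorem~\ref{main} already gives the conclusion directly.
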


\paragraph{Scenario 2: three lines and two conics.}
According to our list in the Appendix, there are exactly \(116\) weak-combinatorial types of two conics and three lines. Using Theorem~\ref{HP} and Theorem~\ref{main}, we can narrow down the number of cases to consider which are enumerated below.  We are left with $33$ cases.
\begin{multline*}
C(n_{2},n_{3},t_{3},t_{5},t_{7},d_{6},d_{8}) \in \bigg\{
(0,2,0,0,0,2,1),(0,2,2,0,0,1,1),(0,2,4,0,0,0,1),\\(0,3,0,0,0,0,2),
(0,3,1,1,0,0,1),(1,1,1,0,0,2,1),(1,1,3,0,0,1,1),(1,2,0,1,0,1,1),\\(1,2,1,0,0,0,2),
(1,2,2,1,0,0,1),(1,3,0,0,1,0,1),(2,0,0,0,0,3,1),(2,0,2,0,0,2,1),\\(2,1,0,0,0,1,2),
(2,1,1,1,0,1,1),(2,1,2,0,0,0,2),(2,2,0,2,0,0,1),(2,2,1,0,1,0,1),\\(3,0,0,1,0,2,1),
(3,0,1,0,0,1,2),(3,0,2,1,0,1,1),(3,0,3,0,0,0,2),(3,1,0,0,1,1,1),\\(3,1,0,1,0,0,2),
(3,1,1,2,0,0,1),(3,1,2,0,1,0,1),(4,0,0,0,0,0,3),(4,0,0,2,0,1,1),\\(4,0,1,0,1,1,1),
(4,0,1,1,0,0,2),(4,1,0,1,1,0,1),(5,0,0,0,1,0,2),(6,0,0,0,2,0,1) 
\bigg\}.
\end{multline*}

The following structural results allows us to exclude the remaining cases.
\begin{proposition}\label{seven}
Let $\mathcal{CL}$ be an arrangement of $k \geq 2$ conics and and $d\geq 1$ lines with singularities no worse than those in Assumption~$(\square)$. Then 
$$t_{5}+t_{7}+d_{8} \leq \binom{k}{2}.$$
\end{proposition}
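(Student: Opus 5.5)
Each singularity of type $A_5$, $A_7$ or $D_8$ forces a pair of \emph{conics} meeting with high local intersection multiplicity. The plan is to attach to each such point an unordered pair of conics, and then show by B\'ezout's Theorem that no pair of conics is used twice. This gives an injection into the set of pairs of conics, whose cardinality is $\binom{k}{2}$.

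\textbf{Step 1: local branch structure.} An $A_{2j-1}$ singularity on our arrangement consists of two smooth branches with $i_p=j$. So an $A_5$ point has two branches meeting with $i_p=3$, and an $A_7$ point has two branches with $i_p=4$. A $D_8$ point has three smooth branches with pairwise indices $1,1,3$, as recalled in the Remark following Definition~\ref{def: PIM}. In each case there are two components $C_a,C_b$ of $\mathcal{CL}$ with $i_p(C_a,C_b)\geq 3$. Both must be conics. A line meets another line with index $1$, and a line meets a conic with total intersection number $2$, so the index is at most $2$ at any point (this is the same B\'ezout argument as in the proof of Theorem~\ref{s1}). Define $\varphi(p)=\{C_a,C_b\}$. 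This is well defined: at a $D_8$ point only one pair has index $3$, and at an $A$-point there are only two branches.

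\textbf{Step 2: injectivity.} Suppose two distinct points $p\neq q$ of these types had $\varphi(p)=\varphi(q)=\{C_a,C_b\}$. Then
\[
\sum_{r\in C_a\cap C_b} i_r(C_a,C_b)\;\geq\; i_p(C_a,C_b)+i_q(C_a,C_b)\;\geq\; 3+3=6>4=\deg C_a\cdot\deg C_b .
\]
This contradicts B\'ezout's Theorem, since $C_a\neq C_b$ are distinct irreducible conics and hence have no common component. Therefore $\varphi$ is injective from the set of points of types $A_5$, $A_7$, $D_8$ into the set of pairs of conics, which yields $t_5+t_7+d_8\leq\binom{k}{2}$.

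\textbf{Main obstacle.} The only delicate point is Step 1: justifying the branch decomposition of each singularity type within a conic-line arrangement, in particular that at a $D_8$ point the index-$3$ pair is unique and consists of two distinct conics. I would handle this by noting that all components are smooth, so each branch at $p$ is a distinct irreducible component. The pairwise intersection indices then determine the topological type of the singularity; for a $D_8$ point they are exactly $\{1,1,3\}$. With this established, the remaining steps are the B\'ezout count above.
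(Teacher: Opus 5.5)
Your proof is correct and follows essentially the same route as the paper: each $A_5$, $A_7$ or $D_8$ point is charged to a pair of conics meeting there with index at least $3$, and B\'ezout ($3+3>4$) shows that no pair of conics is charged twice. Your version is slightly more careful than the paper's, since it explicitly rules out mixed cases (e.g.\ an $A_5$ point and a $D_8$ point on the same pair of conics), which the paper's wording leaves implicit.
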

\begin{proof}
This inequality follows straightforwardly from the fact that a pair of smooth conics can have at most one singularity of type $A_{5}$ or at most one singularity of type $A_{7}$. Furthermore, an $A_{5}$ singularity is a building block of each $D_{8}$ singularity. Since there are $\binom{k}{2}$ pairs of conics, the assertion follows.
\end{proof}
As a consequence, most weak combinatorial types can be excluded. It remains to analyze seven cases, namely:
\begin{multline*}
C(n_{2},n_{3},t_{3},t_{5},t_{7},d_{6},d_{8}) \in \bigg\{
(0,2,0,0,0,2,1),(0,2,2,0,0,1,1),(0,2,4,0,0,0,1), \\
(1,1,1,0,0,2,1),(1,1,3,0,0,1,1),(2,0,0,0,0,3,1), (2,0,2,0,0,2,1)\}.
\end{multline*}
In order to approach these cases, we need to introduce some tools and results that will be crucial for our further considerations.We begin by recalling a result due to Dimca, suitably adapted to our setting; see \cite[Theorem~1.3]{POG}. Let us recall that if $C=\{f=0\}$ is a reduced free plane curve of degree $d$, then the exponents of $C$ are defined as the pair of the form ${\rm exp}(C) = (\delta_{1}, \delta_{2}) = ({\rm mdr}(f), d-1-{\rm mdr}(f)).$
\begin{theorem}[Dimca]
\label{dimm}
Let $C \subset \mathbb{P}^{2}_{\mathbb{C}}$ be a reduced curve of degree $d$ and let $L$ be a line which is not an irreducible component of $C$. Assume that $C' = C \cup L : f'  = 0$ is a free curve with exponents $(\delta_{1}', \delta_{2}')$ and it admits only ${\rm ADE}$-singularities. Then the exponents of $(\delta_{1},\delta_{2})$ (respectively $(\delta_{1},\delta_{2},\delta_{3}))$ of the free curve $C$ (respectively plus-one generated) and the number $r = |C \cap L|$ of intersection points satisfy one of the following conditions:
\begin{enumerate}
    \item $\delta_{1}' < \delta_{2}'$, $\delta_{1} = \delta_{1}'$ and $\delta_{2}=\delta_{2}' - 1$. In this case $C$ is free with $r=\delta_{1}+1$.
    \item $\delta_{1}=\delta_{1}' -1$ and $\delta_{2}=\delta_{2}'$. In this case $C$ is free and $r= \delta_{2}+1$.
    \item $\delta_{1}=\delta_{1}'$,  $\delta_{2}=\delta_{2}'$ and in this situation $C$ is plus-one generated curve with $r=d-\delta_{3}$.
\end{enumerate}
\end{theorem}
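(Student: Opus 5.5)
The plan is to compare the Tjurina numbers and the minimal degrees of Jacobian relations of $C$ and $C'=C\cup L$, and then squeeze the possibilities using the du Plessis--Wall bounds together with the freeness criterion recalled above. Write $f'=f\ell$ with $\ell$ the equation of $L$, so $\deg C'=d+1$ and $\delta_1'+\delta_2'=d$.

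\textbf{Step 1: the Tjurina numbers.} I would compute $\tau(C')-\tau(C)$. Since all singularities of $C'$ are ADE, they are quasi-homogeneous, so $\tau_p=\mu_p$ at every singular point. For a smooth branch $L$ through $p$ one has the Milnor number formula $\mu_p(f\ell)=\mu_p(f)+\mu_p(\ell)+2i_p(C,L)-1=\mu_p(f)+2i_p(C,L)-1$. Summing over the $r$ points of $C\cap L$ and using B\'ezout, $\sum_p i_p(C,L)=d$, gives
\[
\tau(C)=\tau(C')-2d+r=d^2-\delta_1'\delta_2'-2d+r .
\]

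\textbf{Step 2: the relation between ${\rm mdr}(f)$ and ${\rm mdr}(f')$.} I would show that $\delta_1'-1\le {\rm mdr}(f)\le \delta_1'$.
\begin{itemize}
\item For the lower bound: a syzygy of $f$ of degree $s$, viewed as a derivation $D$ with $D(f)=0$, can be combined with the Euler derivation into a derivation of degree $s+1$ killing $f\ell$. Concretely, take $\ell D-\tfrac{D(\ell)}{d+1}\cdot(\text{Euler})$ after correcting by a multiple of $f$; this shows ${\rm mdr}(f')\le {\rm mdr}(f)+1$.
\item For the upper bound: if $D'(f\ell)=0$, then $\ell\mid D'(\ell)\,f$ forces $D'(\ell)=c\,\ell$. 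Then $D'-\tfrac{c}{d+1}E$ yields a relation for $f$ of the same degree, so ${\rm mdr}(f)\le{\rm mdr}(f')$.
\end{itemize}

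\textbf{Step 3: case analysis.} Set $\rho={\rm mdr}(f)$. The du Plessis--Wall inequality gives $\tau(C)\le (d-1)^2-\rho(d-1-\rho)$, with equality exactly when $C$ is free. When the inequality is strict, the defect is controlled by the third exponent, which is the plus-one generated situation.

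Plugging in Step 1 turns this into an inequality linking $r$ and $\delta_1',\delta_2'$:
\begin{itemize}
\item For $\rho=\delta_1'-1$, freeness is equivalent to $r=\delta_2+1$ with $\delta_2=\delta_2'$, which is case (2).
\item For $\rho=\delta_1'$ and $\delta_1'<\delta_2'$, freeness with $\delta_2=\delta_2'-1$ is equivalent to $r=\delta_1+1$, which is case (1).
\item In the remaining situation the deficiency from freeness is computed explicitly. Using that $f'$ free implies the syzygy module of $f$ has at most three generators (coming from restricting the two generators of $AR(f')$ and the multiplication by $\ell$), one identifies $C$ as plus-one generated with exponents $(\delta_1',\delta_2',\delta_3)$. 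The formula $\tau(C)=(d-1)^2-\delta_1(d-1-\delta_1)-(\delta_3-d+\delta_1+1)$ for plus-one generated curves then gives $r=d-\delta_3$, which is case (3).
\end{itemize}

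\textbf{Main obstacle.} The hardest step is the third case: proving that $C$ is genuinely plus-one generated, rather than merely non-free, and pinning down $\delta_3$. I would first try to build an explicit exact sequence
\[
0\to AR(f')\to AR(f)\to \bigl(S/(\ell)\bigr)(\text{shift})
\]
obtained by restricting derivations to $L$. I would then check, via a Hilbert-function count using the $\tau$ formula of Step 1, that the cokernel is cyclic. A cyclic cokernel is exactly what produces the single extra generator of degree $\delta_3$.
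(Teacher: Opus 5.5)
The paper gives no proof of this statement; it quotes it from Dimca. So your argument has to stand on its own, and it has a genuine gap exactly where you place the main obstacle.

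Step~1 is fine, though you also need $\tau_p(C)=\mu_p(C)$; this holds because a subgerm of a simple germ is simple. Step~2 is fixable: if $D'(f\ell)=0$, then $D'(\ell)=h\ell$ with $h$ a form of degree $s-1$, not a constant, and the relation for $f$ is $D'+\frac{h}{d}E$.

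The problem is Step~3. The du Plessis--Wall comparisons only give inequalities. For ${\rm mdr}(f)=\delta_1'-1$ you get $r\le\delta_2'+1$, but nothing forces equality, whereas the theorem asserts that $C$ is then free. Likewise, strict inequality does not mean ``plus-one generated''; most non-free curves are not.

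So the whole trichotomy rests on one unproved claim: $AR(f)$ is generated by the image of $AR(f')$ plus one extra element. Equivalently, the image $I$ of $AR(f)\to\bar S:=S/(\ell)$, $\rho\mapsto D_\rho(\ell)\bmod\ell$, must be a principal ideal. A Hilbert-function count based on $\tau$ cannot show this, because $\tau(C)$ and $\tau(C')$ only determine Hilbert \emph{polynomials}. You learn only that $\dim I_k=k+1-(d-r)$ for $k\gg0$. That is equally consistent with $I=g\cdot J$ for a non-principal ideal $J\subset\bar S$ of finite colength, and then $AR(f)$ would need more than three generators.

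The missing ingredient is a depth (saturation) argument.
\begin{itemize}
\item Since $AR(f)=\ker(S^3\to S)$, it has depth $\ge 2$, so $H^0_{\mathfrak m}(AR(f))=H^1_{\mathfrak m}(AR(f))=0$.
\item Since $AR(f')$ is free, $H^i_{\mathfrak m}(AR(f'))=0$ for $i\le 2$.
\item The local cohomology sequence of $0\to AR(f')\to AR(f)\to I\to 0$ then gives $H^0_{\mathfrak m}(I)=H^1_{\mathfrak m}(I)=0$. So $I$ is a graded module of depth $2$ over the two-dimensional polynomial ring $\bar S$, hence free: $I=g\bar S$. Equivalently, $H^1_*$ of the split bundle $\widetilde{AR(f')}$ on $\mathbb{P}^2$ vanishes.
\item Only now does your Step~1 enter (the only place the ADE hypothesis is used). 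Comparing Hilbert polynomials gives $\deg g=\tau(C')-\tau(C)-d=d-r$.
\end{itemize}

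Hence $AR(f)$ is generated by a basis $g_1,g_2$ of $AR(f')$ and a lift $g_3$ of $g$, with the single relation $\ell g_3=ag_1+bg_2$. Indeed, for any relation $\alpha g_1+\beta g_2+\gamma g_3=0$, mapping to $\bar S$ gives $\ell\mid\gamma$. The three cases then follow:
\begin{itemize}
\item Since $\ell g_3\in AR(f')$, we have $d-r\ge\delta_1'-1$, and equality gives (2).
\item If $d-r\ge\delta_1'$, then $b\neq 0$. Otherwise $\ell g_3=ag_1$ forces one of two things. Either $\ell\mid a$, so $g_3\in AR(f')$, contradicting $g\ne 0$. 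Or $\ell$ divides $g_1$ coordinatewise, and then $g_1/\ell$ is a syzygy of $f$ of degree $\delta_1'-1<\deg g$, hence lies in $AR(f')$, contradicting ${\rm mdr}(f')=\delta_1'$.
\item Therefore $d-r+1\ge\delta_2'$. Equality makes $g_2$ redundant and gives (1). The case $d-r\ge\delta_2'$ gives (3) with $\delta_3=d-r$.
\end{itemize}
With this argument in place, Step~2 and the du Plessis--Wall comparisons are no longer needed.
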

The following corollary is straightforward. 
\begin{corollary}
\label{ell}
In the setting of Theorem \ref{dimm}, assume that $C' = C \cup L$ is a maximizing curve of degree $d' = {\rm deg}(C')=2m+1$ for some $m\geq 2$, and let $d = {\rm deg}(C)$. Then either
\begin{enumerate}
    \item $r=m$ and $C$ is free, or
    \item $r \leq m-1$ and $C$ is plus-one generated.
\end{enumerate}
\end{corollary}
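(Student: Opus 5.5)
We deduce Corollary~\ref{ell} from Theorem~\ref{dimm} together with the Remark recalling that a maximizing curve of degree $2m+1$ is free with ${\rm mdr}(f')=m-1$. First we write down the exponents of $C'$. Since $d'=2m+1$, we have
$$(\delta_1',\delta_2') = ({\rm mdr}(f'),\, d'-1-{\rm mdr}(f')) = (m-1,\, m+1).$$
In particular $\delta_1'<\delta_2'$, so all three alternatives of Theorem~\ref{dimm} are a priori possible. Note that $d=d'-1=2m$.

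Next we go through the three cases of Theorem~\ref{dimm}.

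\begin{enumerate}
\item In case (1), $C$ is free with exponents $(m-1,m)$, and $r=\delta_1+1=m$.
\item In case (2), $C$ is free with $\delta_1=m-2$ and $\delta_2=m+1$. Then $\delta_1+\delta_2=2m-1=d-1$, which is consistent, and $r=\delta_2+1=m+2$. We must exclude this case. For this we use the Tjurina count and the freeness criterion. For a free curve of degree $2m$ with exponents $(m-2,m+1)$, equation \eqref{duPles} gives
$$\tau(C)=(2m-1)^2-(m-2)(m+1).$$
On the other hand, $\tau(C')=3m^2+1$. Adding a transversal-type line meeting $C$ in $r$ points changes the local Tjurina numbers, and one expects $\tau(C')-\tau(C)$ to be controlled by $r$. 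The quick route, however, is the general identity for free curves under addition of a line. In Dimca's framework, $\tau(C')=\tau(C)+(d-r)+\ldots$ is already encoded in the theorem, so rather than recompute, we should check that case (2) forces $r=m+2$ to be compatible with the requirement ${\rm mdr}(f')=m-1$. Maximality of $C'$ means $\tau(C')$ attains the upper bound, so we argue as follows. Since $m-2<(2m-1)/2$, the curve $C$ with exponents $(m-2,m+1)$ would have
$$\tau(C)=(2m-1)^2-(m-2)(m+1)=3m^2-3m+3.$$
The difference $\tau(C')-\tau(C)=3m-2$ must be at least the contribution of the new intersections, whereas a line meeting $C$ with total intersection $2m$ in $r=m+2$ points gives a contribution computable from ADE types. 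We expect this to give a contradiction, or else to collapse case (2) into a bound. This exclusion is the main obstacle: the cleanest route is likely to use the known fact that the minimal degree of a syzygy cannot drop by adding a line with so few tangencies. Failing that, we verify it directly via the local Tjurina increments $\tau_p(C')-\tau_p(C)\ge i_p(C,L)$ at each point, with equality only when the point is smooth on $C$ or the intersection is transversal in a suitable sense.
\item In case (3), $C$ is plus-one generated with $(\delta_1,\delta_2)=(m-1,m+1)$ and $r=d-\delta_3$. For plus-one generated curves one has $\delta_3\ge\delta_2=m+1$, hence $r\le 2m-(m+1)=m-1$.
\end{enumerate}

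Combining the three cases, and using the exclusion of case (2), we obtain exactly the dichotomy stated: either $r=m$ and $C$ is free, or $r\le m-1$ and $C$ is plus-one generated.
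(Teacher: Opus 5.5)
Your handling of cases (1) and (3) of Theorem~\ref{dimm} is correct and matches the paper. There is, however, a genuine gap in case (2), where $C$ would be free with exponents $(m-2,m+1)$ and $r=m+2$. You never exclude this case; you only say you expect a contradiction.

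Moreover, the fallback you propose cannot produce one. All singularities involved are ADE, hence quasi-homogeneous, so $\tau_p=\mu_p$. At each $p\in C\cap L$ one has $\mu_p(C')=\mu_p(C)+2i_p(C,L)-1$. Summing,
\[
\tau(C')-\tau(C)=\sum_{p\in C\cap L}\bigl(2i_p(C,L)-1\bigr)=2d-r=4m-r .
\]
For $r=m+2$ this is exactly $3m-2$, the value you obtained from \eqref{duPles}. So the Tjurina bookkeeping is automatically consistent with case (2), and no analysis of local increments will rule it out. Your heuristic that the mdr ``cannot drop by adding a line'' also points the wrong way: in case (2) the mdr increases from $m-2$ to $m-1$ when $L$ is added.

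The missing idea is a lower bound on ${\rm mdr}$ for curves with simple singularities, which is exactly what the paper uses. By the Dimca--Sernesi theorem \cite[Theorem~1.2]{DimSer}, a reduced plane curve of degree $d$ with only ADE singularities satisfies ${\rm mdr}(f)>\frac{d}{2}-2$. This comes from ${\rm mdr}(f)\ge\alpha_C\, d-2$, where the minimal Arnold exponent $\alpha_C$ exceeds $\tfrac12$ for simple singularities.

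Now $C$ has degree $2m$, and as a union of components of $C'$ it has only ADE singularities: removing the smooth branch $L$ from an ADE germ leaves an ADE or smooth germ. Hence $\delta_1={\rm mdr}>m-2$, which rules out $\delta_1=m-2$. With this step inserted, your argument is complete.
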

\begin{proof}
If $C'$ is maximizing then $C'$ is free with exponents $(\delta_{1}', \delta_{2}') = (m-1, m+1)$, and by Theorem \ref{dimm} there are three cases to consider.
\begin{enumerate}
    \item $C$ is free with exponents $(\delta_{1},\delta_{2})=(m-1,m)$ and $r = m$, which is our first situation.
    \item $C$ is free with exponents $(\delta_{1},\delta_{2}) = (m-2,m+1)$. However, by \cite[Theorem 1.2]{DimSer}, the following inequality always holds:
    $$\delta_{1} > \frac{1}{2}\cdot d-2 = m-2,$$
and we get a contradiction.    
    \item $C$ is plus-one generated with $(m_1,m+1,\delta_{3})$ and $m+1 \leq \delta_{3} \leq 2m-1$. Moreover,
    $$r = d-\delta_{3} = 2m-\delta_{3} \leq 2m-m-1 = m-1.$$
\end{enumerate}
This completes the proof.
\end{proof}
Let us come back to our scenario of maximizing curves of degree seven. For our next result we will need the following definition.
\begin{definition}
Let $C \subset \mathbb{P}^{2}_{\mathbb{C}}$ be a reduced plane curve and let $L$ be a line that is not an irreducible component of $C$. Let $C' = C \cup L$ and $P \in {\rm Sing}(C')$. We define the following 
$$\triangle_{P}( \text{singularity type of } P \in C; \text{ singularity type of } P \in C') = (i_{p}(L,C), \tau_{p}(C') - \tau_{p}(C)). $$
Moreover, if $P$ is a smooth point of $C$ we just write $\triangle({\rm smooth}, \star)$.
\end{definition}
\begin{remark}
Let us briefly explain the above notation. Let $C$ be a smooth conic and $P \in L \cap C$ be a tacnode. For $C$ our point $P$ is smooth, hence
$$\triangle_{P}({\rm smooth}; A_{3}) = (2, 3).$$
\end{remark}
\begin{proposition}
\label{marcin}
Let \( C \subset \mathbb{P}^2_{\mathbb{C}} \) be a reduced plane curve of degree \(6\), 
and let \( L \) be a line which is not an irreducible component of \( C \).
Assume that all irreducible components of \( C \) are smooth.
Set
\[
C' := C \cup L .
\]
Assume that \( C' \) is a maximizing curve and that all its singularities are of the following types:
\[
A_{2k+1} \quad (k \ge 0), \qquad \text{or} \qquad D_j \quad (j \ge 4 \text{ even}).
\]
Then either\begin{enumerate}
    \item $C$ is free with exponents $(\delta_{1},\delta_{2})=(2,3)$ and $r=3$, or
    \item $C$ is plus-one generated with exponents $(2,4,4)$ and $r=2$. Moreover, the two singular points $P_{1}, P_{2} \in L \cap C$ are of type $D_{6}$, and the line $L$ is tangent to an irreducible component $C_{1}$ of $C$ at both $P_{1}$ and $P_{2}$.
\end{enumerate}
\end{proposition}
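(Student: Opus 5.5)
We apply Corollary~\ref{ell} with $m=3$: since $C'$ is maximizing of degree $7$, either $r=3$ and $C$ is free (in this case the exponents are forced to be $(2,3)$ by the proof of Corollary~\ref{ell}), or $r\le 2$ and $C$ is plus-one generated with exponents $(2,4,\delta_3)$, where $\delta_3 = 6-r \ge 4$. The first alternative is exactly conclusion~1, so everything reduces to analysing $r\le 2$.

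For $r\le 2$ we compute the Tjurina numbers. The global Tjurina number of a plus-one generated curve of degree $6$ with exponents $(2,4,\delta_3)$ is $\tau(C) = (d-1)^2 - d_1(d-1-d_1) - (\delta_3 - d + 1 + d_1)$, i.e.\ $\tau(C)=25-6-(\delta_3-1)=20-\delta_3 = 14+r$. Hence $\tau(C')-\tau(C) = 28-14-r = 14-r$. On the other hand, $\tau(C')-\tau(C) = \sum_{P\in L\cap C}\bigl(\tau_P(C')-\tau_P(C)\bigr)$, since points of $L$ off $C$ are smooth on $C'$. Now $\sum_P i_P(L,C)=6$ by B\'ezout, so we must control the ratio between the second and first coordinates of $\triangle_P$. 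Under the allowed singularity types (only $A_{odd}$ and $D_{even}$, with smooth components), we list the possible $\triangle_P$: $\triangle(\mathrm{smooth};A_1)=(1,1)$, $\triangle(\mathrm{smooth};A_3)=(2,3)$, $\triangle(\mathrm{smooth};A_5)=(3,5)$, $\triangle(A_1;D_4)=(2,3)$, $\triangle(A_3;D_6)=(3,3)$ or $(4,3)$ (according to whether $L$ is transversal to both branches or tangent to the common tangent, the latter producing $D_6$ only in the configuration where $L$ has contact $2$ with both branches, giving $i_P=4$), $\triangle(A_{2k+1};D_{2k+4})$, $\triangle(A_1;A_3)$, $\triangle(D_4;\cdot)$, and so on. In each case $\tau_P(C')-\tau_P(C)\le 2\,i_P(L,C)-1$ roughly, with finer values as listed. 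The key inequality to aim for is therefore $\tau_P(C')-\tau_P(C) \le i_P(L,C)+c_P$, where $c_P$ is small, chosen so that summing over at most two points yields $14-r$ only in the claimed configuration.

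Concretely, with $r=1$ we need a single point with $i_P=6$ and a jump of $13$; with $r=2$ we need two points with $i_{P_1}+i_{P_2}=6$ and total jump $12$. We then go through the table: a tangency of order $k$ of $L$ with a smooth branch contributes at most $2k-1$, and passing through an existing singular point adds the local $\tau$ increase computed from the normal forms. For $r=1$ the options ($A_{11}$, $A_{13}$, large $D$'s, etc.) either exceed the allowed types (a line meets a conic with $i\le 2$ and a line with $i\le1$, so high contact is impossible, and $L\cap C$ would need the lines or conics to pass through one point with too high a multiplicity, giving non-ADE or non-allowed points), or give a jump $<13$. For $r=2$ the only solution is two points each contributing $(3,6)$ or equivalent, realized precisely by $A_3\to D_6$ with $L$ tangent to a conic component $C_1$ at $P_j$ through another branch; this forces both $P_1,P_2$ to be $D_6$ and $L$ tangent to $C_1$ at both, and then $\delta_3=4$, i.e.\ exponents $(2,4,4)$.

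The main obstacle is the case-by-case local computation of $\triangle_P$ for all pairs (type in $C$, type in $C'$) compatible with the allowed singularities, and checking that equality in the total jump is attained only by the double $D_6$ configuration; I expect to organise this by the number of branches of $C$ at $P$ (one, two, or three) and bound $\tau_P(C')-\tau_P(C)$ in each, using the known values $\tau(A_k)=k$ and $\tau(D_k)=k$ together with the intersection multiplicities of the branches.
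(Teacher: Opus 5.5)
Your route is the paper's (Corollary~\ref{ell} with $m=3$, the Tjurina number of a plus-one generated sextic, then a table of $\triangle_P$ together with B\'ezout). However, two errors break the decisive $r=2$ step.

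First, there is an arithmetic slip. With $d=6$, $d_1=2$, your own formula gives $\delta_3-d+1+d_1=\delta_3-3$, not $\delta_3-1$. So $\tau(C)=22-\delta_3=16+r$, which is the paper's $18-(\delta_3-\delta_2)$ with $\delta_2=4$. The required total jump is therefore $\tau(C')-\tau(C)=12-r$, i.e.\ $11$ for $r=1$ and $10$ for $r=2$, not $13$ and $12$.

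Second, your table is wrong exactly where it matters. At most two branches of $C$ pass through any $P\in L\cap C$, since four branches would give multiplicity $4$ in $C'$. Also, $L$ cannot be tangent to the common tangent of an $A_{2k+1}$-point with $k\ge 1$, because three pairwise tangent smooth branches do not form a simple singularity. Hence $\triangle(A_{2k+1};D_{2k+4})=(2,3)$ only, and there is no $(3,3)$, $(4,3)$ or $(3,6)$. With lines and conics, the only way to get $i_P(L,C)=3$ is for $L$ to be tangent to a conic at an $A_1$-point of $C$ and transverse to the other branch. That gives $\triangle(A_1;D_6)=(3,5)$. This is the configuration you describe in words, but you mislabel it as $A_3\to D_6$ with jump $6$. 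Against your target $12$, a correct table admits no solution for $r=2$ at all (at most $5+5=10$); you reach the stated configuration only through an entry that does not exist. With the correct target $10$, two $(3,5)$'s fit exactly, giving two $D_6$ points and $\delta_3=6-r=4$.

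The underlying point you are missing is that the local jump is not merely bounded but determined. By $\mu(fg)=\mu(f)+\mu(g)+2i_P(f,g)-1$ and $\tau=\mu$ at ADE points, one has $\tau_P(C')-\tau_P(C)=2\,i_P(L,C)-1$. Hence $\sum_P\bigl(2i_P(L,C)-1\bigr)=12-r$ holds identically. No inequality of the form $\tau_P(C')-\tau_P(C)\le i_P+c_P$ can therefore single out a configuration; the Tjurina count adds nothing beyond $\delta_3=6-r$.

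What really does the work is the local bound $i_P(L,C)\le 3$ for lines and conics. In the paper it appears as ``each point contributes at most $5$.'' With $\sum_P i_P=6$, this bound gives $r\ge 2$, which is what your $r=1$ discussion is really using. For $r=2$ it forces $i_{P_1}=i_{P_2}=3$, i.e.\ $A_1\to D_6$ at both points.

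Two side remarks, both shared with the paper:
\begin{itemize}
\item This argument uses that the components are lines and conics. For arbitrary smooth components, as the statement literally allows, a contribution such as $\triangle(\mathrm{smooth};A_{11})=(6,11)$ is not excluded by local considerations.
\item With conics, $L$ cannot be tangent to the same conic at both $P_1$ and $P_2$. The two tangencies are to two different conics.
\end{itemize}
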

\begin{proof}
The only non-trivial thing is to rule out the case $r=1$. If $C$ is plus-one generated then
\[\tau(C) = (d-1)^2 - \delta_{1}(d-\delta_{1}-1)-1-(\delta_{3}-\delta_{2}) = 18 -(\delta_{3} - \delta_{2}).\]
Since $C'$ is maximizing we have $\tau(C')=28$ and $\triangle_{C',C}^{\tau}:= \tau(C') - \tau(C) = 10 + (\delta_{3}-\delta_{2})\geq 10.$
According to our assumptions, we have the following possibilities for $\triangle_{P}(\square \,; \star)$ :
\[\begin{tabular}{|c|}
\hline
$\triangle_{P}({\rm smooth}; A_{1}) = (1,1)$\\
$\triangle_{P}({\rm smooth}; A_{3}) = (2,3)$\\
$\triangle_{P}(A_1; D_{4}) = (2,3)$\\
$\triangle_{P}(A_1; D_{6}) = (3,5)$\\
$\triangle_{P}(A_3; D_{4}) = (2,3)$\\
$\triangle_{P}(A_{2k+1}; D_{4k}) = (2,3)$ \text{ with } $k\geq 2$\\
\hline
\end{tabular}\]
Observe that the case $\triangle_{P}(A_{1}; D_{6}) = (3,5)$ can occur only when the line $L$ is tangent to one of the irreducible components of $C$ at an $A_{1}$ singular point. Moreover, since we assumed that all components are smooth, this $A_{1}$ singularity must be an intersection point of two irreducible components of $C$. Since $\triangle_{C',C}^{\tau} \ge 10$, the above considerations imply that $r \ge 2$.
If $r = 2$, then there is only one admissible combination, namely
\[ \triangle_{P_{1}}(A_{1};D_{6}) = \triangle_{P_{2}}(A_{1};D_{6}) = (3,5). \]
In this case, we obtain $\triangle_{C',C}^{\tau} = 10$, which implies that $\delta_{3} = \delta_{2}$.
Consequently, the line $L$ must be tangent to one of the irreducible components of $C$
at both points $P_{1}$ and $P_{2}$.
\end{proof}

Using a computer program we can determine all distinct (up to the permutations of curves and singularities) $\mathrm{PIM}(C)$ for each of the remaining seven admissible weak combinatorics (see Proposition \ref{seven}) that are B\'ezout constructible. We found the PIM($C$) matrices only for four of these cases, namely:
\begin{align*}
C(n_{2},n_{3},t_{3},t_{5},t_{7},d_{6},d_{8}) \in \big\{
(0,2,4,0,0,0,1),
(1,1,1,0,0,2,1), \\(2,0,0,0,0,3,1), (2,0,2,0,0,2,1)\big\}.
\end{align*}

Now we are going to exclude the remaining four cases directly by the so-called Coordinatization method that is well-established in the theory of line arrangements. For the remaining part of this paper, we change the notation used in ${\rm PIM}$ matrices introduced in Definition \ref{def: PIM} and we label each conic by $C_i$ and each line by $L_j$, for clarity.

First, we are going to prove the following useful lemma.

\begin{lemma}\label{lem: bitangent}
If two conics $C_1$ and $C_2$ are tangent with multiplicity $3$, then there exists only one line tangent to both conics at two different points.

\begin{proof}
From \cite[Proposition 4.2.1]{Celal}, we can assume that
\[C_1: x^2 - yz = 0, \quad C_2 : x^2 + by^2 + cxy - yz = 0.\]
for some $b, c \in \mathbb{C}$, $c \neq 0$. The conics dual to $C_1$ and $C_2$ are then given by
\[C_1': x^2 - 4yz = 0, \quad C_2': x^2 + (c^2 - 4b)z^2 + 2cxz - 4yz = 0.\]
In order to find the common tangent lines, we need to solve the system of equations
\[\left\{\begin{array}{l}
x^2 - 4yz = 0, \\
x^2 + (c^2 - 4b)z^2 + 2cxz - 4yz = 0.
\end{array}\right.\]
If $z = 0$, then $x = 0$ and we obtain the solution $[x:y:z] = [0:1:0]$, which is dual to the line $y = 0$. This line is actually tangent to $C_1$ and $C_2$ at the same point $[0 : 0 : 1]$ (the point of tangency of $C_1$ and $C_2$). Therefore we can assume $z = 1$. It suffices to show that under this assumption the above system of equations has only one solution. By subtracting both equations, we get
\[c^2 - 4b + 2cx = 0,\]
hence $x = \frac{4b-c^2}{2c}$ and $y = \frac{x^2}{4} = \frac{(4b-c^2)^2}{16c^2}$. The obtained solution corresponds to a line
\[(32bc - 8c^3)x + (4b-c^2)^2y + 16c^2z = 0.\]
\end{proof}
\end{lemma}

\begin{proposition}
There does not exist an arrangement $\mathcal{C} = C_1 \cup C_2 \cup L_1 \cup L_2 \cup L_3$ of two conics and three lines, having the weak-combinatorial type
\[
\mathcal{C}(n_{2},n_{3},t_{3},t_{5},t_{7},d_{6},d_{8}) \in \{(0,2,4,0,0,0,1), (2,0,2,0,0,2,1)\}.\]
\end{proposition}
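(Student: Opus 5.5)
We plan to argue case by case, starting from the B\'ezout constructible $\mathrm{PIM}(\mathcal{C})$ matrices produced by the computer search, and then to exclude each matrix either with Proposition~\ref{marcin} or by the Coordinatization method. First we pin down the common structure. In both types there is exactly one $D_{8}$ point $p$. Since lines meet conics with multiplicity at most $2$, the only pair of components that can realize the intersection index $3$ inside a $D_{8}$ is the pair of conics. Hence $i_p(C_1,C_2)=3$, and one line, say $L_1$, passes through $p$ transversally to both conics. By B\'ezout, $C_1$ and $C_2$ then have exactly one further common point $q$, where they meet transversally. Since $t_5=t_7=0$ and Proposition~\ref{seven} is saturated, no other conic tangencies occur. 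We then read off from the PIM matrices where $q$ sits (a $D_4$ with a line, or an $A_1$), where the three line--line intersections sit (necessarily $D_4$ or $D_6$ points, or $A_1$ points), and which lines are tangent to which conics at the $A_3$ and $D_6$ points. For $(0,2,4,0,0,0,1)$ there are no $A_1$ points, so every line--line point and the point $q$ must lie in one of the two $D_4$ points. This already forces a very rigid incidence pattern.

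Before coordinatizing, we will try to kill matrices cheaply with Proposition~\ref{marcin}, applied to $C=\mathcal{C}\setminus L_j$ for each line. For $(0,2,4,0,0,0,1)$ there are no $D_6$ points, so every line must satisfy $r=|L_j\cap C|=3$. We will count the distinct points of $L_j$ on the other components using the PIM. We expect this to exclude some of the matrices directly, for instance a line tangent to both conics and passing through a $D_4$ point has $r=3$ only under specific coincidences. For $(2,0,2,0,0,2,1)$, a line with $r=2$ must pass through both $D_6$ points and be tangent to the same conic at both, and any other line must have $r=3$. Again we filter the PIM matrices this way.

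For the surviving matrices we coordinatize. By \cite[Proposition 4.2.1]{Celal} we take $C_1: x^2-yz=0$ and $C_2: x^2+by^2+cxy-yz=0$ with $c\neq 0$, so that $p=[0:0:1]$ and the common tangent there is $y=0$. The line $L_1$ is then $x=\alpha y$, and $q$ is computed explicitly from $C_1\cap C_2$. Whenever the PIM shows a line tangent to both conics at distinct points, Lemma~\ref{lem: bitangent} identifies it as the unique line $(32bc-8c^3)x+(4b-c^2)^2y+16c^2z=0$. The remaining tangency and concurrency conditions ($A_3$, $D_4$, $D_6$ incidences) become polynomial equations in $b,c,\alpha$ and the line coefficients, via discriminant conditions in the dual conics and $3\times 3$ determinant conditions for concurrency. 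We then eliminate variables, for example with resultants or a Gr\"obner basis, and aim to show that every solution forces $c=0$, a degenerate conic, coinciding lines, or an extra unwanted coincidence that changes the singularity types, contradicting the weak-combinatorial type. The main obstacle is this elimination step for the $(2,0,2,0,0,2,1)$ matrices. There the $D_6$ points require a line tangent to a conic at a transversal point of two other components, and these conditions are not linear. We would first exploit the symmetry of the normal form (scaling $x$, and swapping roles when the PIM allows) to reduce parameters, and only then run the elimination.
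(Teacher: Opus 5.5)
\textbf{There is a genuine gap.} Your structural setup is correct: $i_p(C_1,C_2)=3$ at the $D_8$ point, $L_1$ passes through $p$, and there is one further transversal point $q$. But the proposal stops where the proof has to happen. The contradiction is deferred to a resultant or Gr\"obner elimination that you only ``aim'' to finish and that you yourself call the main obstacle, so as written nothing is excluded.

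\textbf{The missing idea is a count of tangencies, which makes the elimination unnecessary.}

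In both weak-combinatorial types there are exactly four points where two branches meet with multiplicity $2$. In $(0,2,4,0,0,0,1)$ these are the four $A_3$ points. In $(2,0,2,0,0,2,1)$ they are the two $A_3$ points plus the two $D_6$ points, each of which contains exactly one tangent pair. The $D_4$, $A_1$ and $D_8$ points contribute none.

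Each of these four contacts is a line tangent to a conic:
\begin{itemize}
\item As you note, $C_1C_2$ has only multiplicity $1$ left after $p$.
\item Two lines always meet transversally.
\item $L_1$ already meets each conic with multiplicity $1$ at $p$, so it cannot be tangent to either conic.
\end{itemize}
A line is tangent to a given conic at most once. So the four contacts are exactly the four pairs $\{L_2,L_3\}\times\{C_1,C_2\}$. Moreover, no line touches both conics at the same point, since that would make $C_1$ and $C_2$ tangent there.

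Hence $L_2$ and $L_3$ are two distinct lines, each tangent to both conics at two distinct points. This contradicts Lemma~\ref{lem: bitangent} immediately. That is the paper's whole proof: no PIM enumeration, no use of Proposition~\ref{marcin}, no elimination.

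Your plan contains the seed of this, since you say Lemma~\ref{lem: bitangent} pins down any line tangent to both conics. What you miss is that every admissible PIM forces two such lines. Your suggested filtering by Proposition~\ref{marcin} would also remove nothing here. In the forced configurations every line meets the rest of the curve in exactly $r=3$ points:
\begin{itemize}
\item In $(0,2,4,0,0,0,1)$, $L_1$ passes through $p$, $q$ and the triple point of the three lines. Each of $L_2$ and $L_3$ passes through its two $A_3$ points and that triple point.
\item In $(2,0,2,0,0,2,1)$, $L_1$ passes through $p$ and both $D_6$ points. Each of $L_2$ and $L_3$ passes through its two tangency points and the $A_1$ point $L_2\cap L_3$.
\end{itemize}
So the difficulty is not in the $(2,0,2,0,0,2,1)$ elimination you anticipated. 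The decisive observation is a purely combinatorial one.
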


\begin{proof}
    Both of these weak-combinatorial types require for the arrangement to have four points in which two curves intersect with multiplicity $2$. Such an intersection can only be realized by a line and a conic (since conics $C_1$ and $C_2$ intersect with multiplicity $3$) and therefore we need to have two distinct lines tangent to both conics at two different points. However, this is not possible by Lemma \ref{lem: bitangent}.
\end{proof}


\begin{proposition}
There does not exist an arrangement $\mathcal{C} = C_1 \cup C_2 \cup L_1 \cup L_2 \cup L_3$ of two conics and three lines, having the weak-combinatorial type
\[
\mathcal{C}(n_{2},n_{3},t_{3},t_{5},t_{7},d_{6},d_{8}) = (2, 0, 0, 0, 0, 3, 1).\]
\end{proposition}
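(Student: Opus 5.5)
The plan is to pin down the incidence structure combinatorially and then rule out every remaining possibility by explicit coordinates. The data are $n_2=2$, $d_6=3$, $d_8=1$. This is consistent: $\tau=2+18+8=28$ and $2+12+5=19=21-k$ with $k=2$.

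\textbf{Step 1: the $D_8$ point.} A $D_8$ point has three smooth branches meeting pairwise with indices $1,1,3$. By B\'ezout, index $3$ occurs only for the pair $C_1C_2$, so $C_1$ and $C_2$ meet at a point $P$ with $i_P(C_1,C_2)=3$, and one line, say $L_1$, passes through $P$ transversally to both conics. Consequently $C_1\cap C_2=\{P,Q\}$ with $i_Q(C_1,C_2)=1$. The point $Q$ is singular, so it is either an $A_1$ point with no line through it, or a $D_6$ point with a line through it.

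\textbf{Step 2: the $D_6$ points.} Each $D_6$ point has three branches with indices $1,1,2$. The index-$2$ entry cannot come from a conic--conic pair: the only conic--conic points are $P$, which is the $D_8$ point, and $Q$, which is transversal. Two lines also never meet with index $2$. Hence each $D_6$ point is a tangency between a line and a conic, with a third curve passing through. Using the row sums of $\mathrm{PIM}$ (each line--conic row sums to $2$, each line--line row to $1$, and the $C_1C_2$ row is $3+1$), I would enumerate all B\'ezout-constructible matrices. This can be done by hand or by citing the computer search mentioned before the statement.

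Two constraints should cut the list down to very few matrices. First, by Lemma~\ref{lem: bitangent}, at most one line is tangent to both conics at distinct points. Second, I would apply Proposition~\ref{marcin} to each line $L_j$ with $C=\mathcal{C}\setminus L_j$. It forces either $|C\cap L_j|=3$, or $|C\cap L_j|=2$ with both points of type $D_6$ and $L_j$ tangent to a single component at both of them. Applied to $L_1$, which meets $C_1$ and $C_2$ only at $P$ besides their second intersections and the other lines, this should fix how $L_1$ meets the rest.

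\textbf{Step 3: coordinatization.} For each surviving matrix I would use the normal form from the proof of Lemma~\ref{lem: bitangent}:
\[
C_1: x^2-yz=0,\qquad C_2: x^2+by^2+cxy-yz=0,\qquad c\neq 0,
\]
with $P=[0:0:1]$. Then $L_1$ is a line $x=\lambda y$ through $P$; note $y=0$ is excluded since it is tangent at $P$. The point $Q$ is computed explicitly from the residual intersection of the two conics. Lines tangent to $C_i$ are parametrized via the dual conics $C_i'$ computed in that proof. The incidence conditions of the matrix (which tangent lines pass through $Q$, through $L_1\cap C_i$, or through each other's tangency points) then become a polynomial system in $b,c,\lambda$ and the tangency parameters. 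I expect this system to force a degeneration: $c=0$, two lines coinciding, or an extra tangency that changes a singularity type (for instance a $D_6$ becoming $D_8$ or $A_3$).

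The main obstacle is Step 3. The case analysis in Step 2 is finite and mechanical, but showing the resulting system has no admissible solution requires careful elimination, probably via resultants or a Gr\"obner basis computation. I would first try the case where $L_2$ is the common bitangent from Lemma~\ref{lem: bitangent}, since its equation is already explicit there, and then impose the third $D_6$ condition on $L_3$.
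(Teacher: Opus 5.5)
\textbf{Verdict: the approach is the paper's, but the proof is incomplete.} Your Steps~1 and~2 follow the paper's route: enumerate the B\'ezout-constructible $\mathrm{PIM}$ matrices, then apply Proposition~\ref{marcin}. However, you never carry out the enumeration. The contradiction is pushed to a Step~3 elimination that you only \emph{expect} to degenerate, so as written nothing excludes the configuration.

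\textbf{The missing step.} The enumeration is short and already finishes the argument.
Since $t_3=0$, line--conic tangencies occur only at the three $D_6$ points, and your $L_1$ (through $P$) is tangent to neither conic. So, after relabeling, $L_2$ is tangent to $C_1$ at $T_1$ and to $C_2$ at $T_2$. These points are distinct, since a common tangency point would have to be $P$. The line $L_3$ is tangent only to $C_k$, at $T_3$.
Because $L_2$ meets $C_j$ only at $T_j$, we get $Q\notin\{T_1,T_2\}$ and $T_3\notin L_2$.
Suppose $Q$ were an $A_1$ point. Then the third branch at $T_3$ would have to be $L_1$, so $T_3$ would be the residual point of $L_1\cap C_k$. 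But then $T_k$ would have no admissible third branch: $L_1$ would force $T_k=T_3$, and $L_3$ meets $C_k$ only at $T_3$. Hence $Q=T_3$.
Now $L_3$ meets $C_k$ only at $Q$, and $L_1$ cannot contain both $T_1$ and $T_2$ (it would equal $L_2$). So $T_k\in L_1$ and $T_l\in L_3$, where $\{k,l\}=\{1,2\}$. This is exactly the unique matrix in the paper; the paper's $L_1,L_2,L_3$ are your $L_2,L_3,L_1$.

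\textbf{Finishing the argument.} $L_1$ meets $C_1\cup C_2\cup L_2\cup L_3$ at four points: $P$, $T_k$, the residual point $X$ of $L_1\cap C_l$, and $L_1\cap L_3$. These are pairwise distinct. For example, $X=L_1\cap L_3$ would put $X$ in $L_3\cap C_l=\{Q,T_l\}$, giving four branches at $Q$ or $L_1=L_2$. So $r=4$. Proposition~\ref{marcin} applies because $\tau=28$ makes the arrangement maximizing, and it allows only $r=3$, or $r=2$ with both points of type $D_6$. This contradiction is the paper's whole proof.

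Your Step~3 is therefore unnecessary. The real work is the combinatorial step you treated as routine, not the elimination you flagged as the main obstacle.

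If you want a coordinate confirmation anyway, it is immediate in your normal form. Choose labels so that $k=1$. The tangent to $C_1$ at $Q=[-bc:c^2:b^2]$ is $2bcx+b^2y+c^2z=0$. The bitangent of Lemma~\ref{lem: bitangent} touches $C_2$ at $T_2=[16bc+4c^3:-16c^2:3c^4-8bc^2-16b^2]$. Substituting $T_2$ into the tangent line gives $3c^6\neq 0$.
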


\begin{proof}
    We have only one possible B\'ezout constructible ${\rm PIM}(\mathcal{C})$ matrix in this case:
\[
\begin{blockarray}{ccccccc}
 & A_1 & A_1 & D_6 & D_6 & D_6 & D_8\\
\begin{block}{c|cccccc|}
  C_{1}C_{2} & 0 & 0 & 0 & 0 & 1 & 3 \\
  C_{1}L_{1} & 0 & 0 & 2 & 0 & 0 & 0 \\
  C_{1}L_{2} & 0 & 0 & 0 & 0 & 2 & 0 \\
  C_{1}L_{3} & 0 & 0 & 1 & 0 & 0 & 1 \\
  C_{2}L_{1} & 0 & 0 & 0 & 2 & 0 & 0 \\
  C_{2}L_{2} & 0 & 0 & 0 & 1 & 1 & 0 \\
  C_{2}L_{3} & 0 & 1 & 0 & 0 & 0 & 1 \\
  L_{1}L_{2} & 0 & 0 & 0 & 1 & 0 & 0 \\
  L_{1}L_{3} & 0 & 0 & 1 & 0 & 0 & 0 \\
  L_{2}L_{3} & 1 & 0 & 0 & 0 & 0 & 0 \\
\end{block}
\end{blockarray}.
 \]    
However, in this arrangement, the line $L_3$ intersects the remaining curves in four different points, which contradicts Proposition \ref{marcin}.
\end{proof}

\begin{proposition}
There does not exist an arrangement $\mathcal{C} = C_1 \cup C_2 \cup L_1 \cup L_2 \cup L_3$ of two conics and three lines, having the weak-combinatorial type
\[
\mathcal{C}(n_{2},n_{3},t_{3},t_{5},t_{7},d_{6},d_{8}) = (1, 1, 1, 0, 0, 2, 1).\]
\end{proposition}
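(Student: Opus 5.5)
The plan is to follow the scheme used for the previous two propositions: list the B\'ezout constructible ${\rm PIM}(\mathcal{C})$ matrices for the weak type $(1,1,1,0,0,2,1)$, and then eliminate each one. The tools are Proposition~\ref{marcin} applied to every line, Lemma~\ref{lem: bitangent}, and, where these do not suffice, an explicit coordinatization.

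\emph{Step 1: local structure.} The $D_8$ point must be formed by the two conics $C_1,C_2$ meeting with multiplicity $3$, together with a line, say $L_3$, passing transversally through that point. Since $C_1\cdot C_2=4$, the conics have exactly one further intersection point, which is transversal. It lies in one of the singular points of type $A_1$, $D_4$ or $D_6$. The remaining contacts of multiplicity $2$ must come from line--conic tangencies, because two conics cannot have a second tangency once they meet with multiplicity $3$. These tangencies are the one at the $A_3$ point and the tangent pair inside each $D_6$, unless the tangent pair of some $D_6$ is a conic--conic pair, which is impossible here. This gives three line--conic tangencies.

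\emph{Step 2: a strong constraint from Proposition~\ref{marcin}.} Apply Proposition~\ref{marcin} to $\mathcal{C}=C\cup L_i$ for each $i$. Its second alternative requires $L_i$ to be tangent to a single smooth conic at two distinct points, which is impossible. Hence every line $L_i$ meets the union of the other four curves in exactly $r=3$ distinct points. The total intersection multiplicity of $L_i$ with the rest is $2+2+1+1=6$, so the only possible distributions over three points are $(2,2,2)$, $(3,2,1)$ and similar splittings. A $3$ can occur only at a $D_6$ point where $L_i$ is tangent to a conic at a transversal conic--conic point, or at the $D_4$ point.

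This is very restrictive. Combined with Lemma~\ref{lem: bitangent}, which allows at most one line tangent to both conics at two different points, it should reduce the list of admissible matrices to very few, probably one or two. I would let the computer enumeration produce all PIM matrices for this type and then filter them by the condition $r=3$ for each line.

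\emph{Step 3: coordinatization.} For each surviving matrix I would place the conics in the normal form $C_1: x^2-yz=0$ and $C_2: x^2+by^2+cxy-yz=0$ with $c\neq 0$, as in the proof of Lemma~\ref{lem: bitangent}. The $D_8$ line $L_3$ passes through $[0:0:1]$ and is different from $y=0$. I would then write the tangent lines using the dual conics $C_1'$ and $C_2'$. The tangency lines are either the unique common bitangent computed in Lemma~\ref{lem: bitangent} or tangent lines to one conic, parametrized by a point of $C_i'$. Next I would impose the incidences: concurrency at the $D_4$ point, passage through the transversal point $C_1\cap C_2\setminus\{[0:0:1]\}$, and the transversal line through each $D_6$ point. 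The aim is to show that the resulting polynomial system in $b$, $c$ and the line parameters forces either $c=0$ or a coincidence, such as two lines becoming equal or an extra intersection point that changes the singularity types.

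I expect this last step to be the main obstacle, since the elimination may be heavy. My first attempt would be to use the tangency data to fix the lines completely in terms of $b$ and $c$, so that only one or two incidence conditions remain to be checked, possibly with computer algebra.
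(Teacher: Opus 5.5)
Your Step 2 is where the argument breaks. The conclusion ``every $L_i$ has $r=3$'' rests on the last clause of Proposition~\ref{marcin}, namely that in the $r=2$ case $L$ is tangent to one and the same component at both points. That clause is not supported by the proposition's proof. The $\tau$-count there only gives $r\ge 2$, and for $r=2$ it gives that both points of $L\cap C$ are of type $\triangle(A_1;D_6)=(3,5)$ with $\delta_3=\delta_2$. Nothing in it says which components $L$ touches. (Applying the proposition is fine in itself, since $\tau=28$ is forced by the weak type.)

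This is exactly the case that matters here. One of the two B\'ezout-constructible matrices for $(1,1,1,0,0,2,1)$ has $L_1$ equal to the common bitangent of $C_1$ and $C_2$:
\begin{itemize}
\item $L_1$ touches $C_1$ at a $D_6$ point whose third branch is $L_2$, and $C_2$ at a $D_6$ point whose third branch is $L_3$;
\item $L_2$ is the $D_8$ line, $L_3$ is tangent to $C_1$ at the $A_3$ point, and $C_2, L_2, L_3$ form the $D_4$.
\end{itemize}
So $L_1$ meets the rest in only two points. Deleting it gives $\tau=28-10=18$, which is precisely the value for the plus-one generated alternative with exponents $(2,4,4)$. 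Nothing in the freeness machinery excludes this matrix, so your filter discards it without justification.

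The paper does not use that clause; it eliminates this matrix by coordinates:
\begin{itemize}
\item $L_1$ comes from Lemma~\ref{lem: bitangent};
\item $L_2\colon 4cx+(4b-c^2)y=0$ passes through $[0:0:1]$ and $L_1\cap C_1$;
\item the $D_4$ point is the second point of $L_2\cap C_2$, and $L_3$ passes through it and $L_1\cap C_2$;
\item $L_3$ is then not tangent to $C_1$, because the relevant discriminant is a nonzero multiple of a power of $c$.
\end{itemize}
You must either prove the clause or carry out this computation.

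Two smaller points. First, your description of where a local contribution $3$ can occur is wrong. At the $D_4$ point a line picks up only $1+1=2$. A $3$ also arises at a $D_6$ point whose transversal branch is another line. That is what happens for $L_2$ in the other matrix, the one where every line does have $r=3$: $L_2$ is tangent to $C_1$ at the $D_6$ it shares with $L_1$. A hand filter based on your description would discard that matrix as well.

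Second, for that surviving matrix your Step 3 is the paper's method, and it is short rather than a heavy elimination:
\begin{itemize}
\item $L_3$ is the tangent to $C_1$ at the transversal point $[-bc:c^2:b^2]$;
\item the $D_4$ point is the second point of $L_3\cap C_2$, and $L_1$ joins it to $[0:0:1]$;
\item the other point of $L_1\cap C_1$ is the second $D_6$, and $L_2$ is the tangent to $C_1$ there;
\item tangency of $L_2$ to $C_2$ fails, because the discriminant of the restricted quadratic is $5c^2\neq 0$.
\end{itemize}
So what is missing is one explicit computation per matrix, including the $r=2$ one.
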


\begin{proof}
We can directly check that we have two possible B\'ezout constructible ${\rm PIM}(\mathcal{C})$ matrices in this case. The first one is the following:
\[
\begin{blockarray}{ccccccc}
 & A_1 & D_4 & A_3 & D_6 & D_6 & D_8\\
\begin{block}{c|cccccc|}
  C_{1}C_{2} & 1 & 0 & 0 & 0 & 0 & 3 \\
  C_{1}L_{1} & 0 & 0 & 0 & 0 & 2 & 0 \\
  C_{1}L_{2} & 0 & 0 & 0 & 0 & 1 & 1 \\
  C_{1}L_{3} & 0 & 0 & 2 & 0 & 0 & 0 \\
  C_{2}L_{1} & 0 & 0 & 0 & 2 & 0 & 0 \\
  C_{2}L_{2} & 0 & 1 & 0 & 0 & 0 & 1 \\
  C_{2}L_{3} & 0 & 1 & 0 & 1 & 0 & 0 \\
  L_{1}L_{2} & 0 & 0 & 0 & 0 & 1 & 0 \\
  L_{1}L_{3} & 0 & 0 & 0 & 1 & 0 & 0 \\
  L_{2}L_{3} & 0 & 1 & 0 & 0 & 0 & 0 \\
\end{block}
\end{blockarray}.
 \]
From \cite[Proposition 4.2.1]{Celal}, we can assume that
\[C_1: x^2 - yz = 0, \quad C_2 : x^2 + by^2 + cxy - yz = 0,\]
for some $b, c \in \mathbb{C}$, $c \neq 0$. These conics intersect at points $[0 : 0 : 1]$ (the $D_8$ singularity) and $[-bc : c^2 : b^2]$ (the $A_1$ singularity). The line $L_1$ is tangent to $C_1$ and $C_2$ at two distinct points, therefore by Lemma \ref{lem: bitangent} it is given by
\[L_1: (32bc - 8c^3)x + (4b-c^2)^2y + 16c^2z = 0.\]
Moreover, we can explicitly compute the coordinates of both points of tangency (which are the $D_6$ singularities). The line $L_1$ is tangent to $C_1$ and $C_2$ at points $[16bc - 4c^3 : -16c^2 : -(4b-c^2)^2]$ and $[16bc + 4c^3 : -16c^2 : 3c^4 - 8bc^2 - 16b^2]$, respectively. The line $L_2$ passes through the first point and through the $D_8$ singularity $[0 : 0 : 1]$, therefore it is given by
\[L_2: 4cx + (4b - c^2)y = 0\]
From there, we can compute the coordinates of the $D_4$ singularity, which is the second intersection point of $C_2$ and $L_2$ (besides $[0 : 0 : 1]$). We obtain the coordinates 
\[[-16bc + 4c^3 : 16c^2 : 16b^2 - 8bc^2 + 5c^4].\]
We can now compute the equation of $L_3$. This line is passing through the $D_4$ singularity and one of the $D_6$ singularities (the intersection of $C_2$ and $L_1$). We have
\[L_3: (32bc  -16c^3)x + (16b^2 - 16bc^2 - c^4)y + 16c^2z = 0\]
Finally, the obtained line has to be tangent to $C_1$ (the $A_3$ singularity). Hence the system of equations
\[\left\{\begin{array}{l}
x^2 - yz = 0, \\
(32bc - 16c^3)x + (16b^2 - 16bc^2 - c^4)y + 16c^2z = 0
\end{array}\right.\]
needs to have only one solution. We can assume $y = 1$ (because $y = 0$ implies $x = 0$ and $z = 0$, a contradiction). Hence $z = x^2$ and after substituting it into the second equation, we obtain
\[16c^2x^2 + (32bc - 16c^3)x + (16b^2 - 16bc^2 - c^4) = 0.\]
The determinant of this equation has to be equal to $0$. However, it is equal to $360c^2$ and by our assumptions $c \neq 0$, which is a contradiction.

The second possible ${\rm PIM}(\mathcal{C})$ matrix is

\[
\begin{blockarray}{ccccccc}
 & A_1 & D_4 & A_3 & D_6 & D_6 & D_8\\
\begin{block}{c|cccccc|}
  C_{1}C_{2} & 0 & 0 & 0 & 1 & 0 & 3 \\
  C_{1}L_{1} & 0 & 0 & 0 & 0 & 1 & 1 \\
  C_{1}L_{2} & 0 & 0 & 0 & 0 & 2 & 0 \\
  C_{1}L_{3} & 0 & 0 & 0 & 2 & 0 & 0 \\
  C_{2}L_{1} & 0 & 1 & 0 & 0 & 0 & 1 \\
  C_{2}L_{2} & 0 & 0 & 2 & 0 & 0 & 0 \\
  C_{2}L_{3} & 0 & 1 & 0 & 1 & 0 & 0 \\
  L_{1}L_{2} & 0 & 0 & 0 & 0 & 1 & 0 \\
  L_{1}L_{3} & 0 & 1 & 0 & 0 & 0 & 0 \\
  L_{2}L_{3} & 1 & 0 & 0 & 0 & 0 & 0 \\
\end{block}
\end{blockarray}.
 \]
As before, from \cite[Proposition 4.2.1]{Celal}, we can assume that
\[C_1: x^2 - yz = 0, \quad C_2 : x^2 + by^2 + cxy - yz = 0,\]
for some $b, c \in \mathbb{C}$, $c \neq 0$. These conics intersect at points $[0 : 0 : 1]$ (our $D_8$ singularity) and $[-bc : c^2 : b^2]$ (one of the $D_6$ singularities) with multiplicities $3$ and $1$, respectively. Based on the table above, the line $L_3$ is tangent to $C_1$ at $[-bc : c^2 : b^2]$, therefore
\[L_3 : 2bcx + b^2y + c^2z = 0. \]
The line $L_3$ intersects $C_2$ at two points, one of which is $[-bc : c^2 : b^2]$ and the second one is the $D_4$ singularity. By solving the system of equations
\[\left\{\begin{array}{l}
2bcx + b^2y + c^2z = 0,\\
x^2 + by^2 + cxy - yz = 0,
\end{array}\right.\]
we obtain the coordinates of the $D_4$ singularity, namely $\big[-\frac{b}{c} - c : 1 : \frac{b^2}{c^2} + 2b\big]$. The line $L_1$ passes through this point as well as the $D_8$ singularity $[0 : 0 : 1]$, therefore it is given by
\[L_1 : x + \left(\frac{b}{c} + c\right)y = 0.\]
We can now compute the coordinates of the second $D_6$ singularity, which is one of the intersection points of $C_1$ and $L_1$ (besides the $D_8$ singularity). Similarly as before, we can now compute the coordinates of the second $D_6$ singularity: $\big[-\frac{b}{c}-c : 1 : \frac{b^2}{c^2} + 2b + c^2\big]$. Note that the line $L_2$ is tangent to $C_1$ at this point. We therefore obtain
\[L_2 : 2\left(-\frac{b}{c}-c\right)x + \left(\frac{b^2}{c^2} + 2b + c^2\right)y - z = 0.\]
This line has to be tangent to $C_2$, which means the system of equations
\[\left\{\begin{array}{l}
2\left(-\frac{b}{c}-c\right)x + \left(\frac{b^2}{c^2} + 2b + c^2\right)y - z = 0,\\
x^2 + by^2 + cxy - yz = 0,
\end{array}\right.\]
needs to have exactly one solution. After eliminating $z$ and simplifying, we obtain
\[x^2 + \left(\frac{2b}{c} + 3c\right)xy + \left(\frac{b^2}{c^2} + 3b + c^2\right)y^2 = 0\]
and the determinant of this equation has to be equal to $0$. However, the determinant is equal to $5c^2$ and by our assumptions $c \neq 0$, which is a contradiction.
\end{proof}

We conclude this section with the following proposition, which follows from the preceding results.

\begin{proposition}\label{result2}
There is no maximizing arrangement of two conics and three lines admitting the singularities listed in Assumption~$(\square)$.
\end{proposition}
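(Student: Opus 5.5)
This statement is essentially a bookkeeping consequence of the preceding reductions, so the proof assembles them in order. Suppose, for contradiction, that $\mathcal{C}$ is a maximizing arrangement of two conics $C_1, C_2$ and three lines $L_1, L_2, L_3$ satisfying Assumption~$(\square)$. Since $\deg \mathcal{C}=7$, we have $m=3$ and $\tau(\mathcal{C})=28$. Counting the pairwise intersections via $\delta$-invariants gives $21-k=19$. Hence the weak combinatorial type of $\mathcal{C}$ solves the Diophantine system at the start of this section with $k=2$, $d=3$, so it is one of the $116$ types listed in the Appendix.

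\textbf{Step 1 (numerical reduction).} I first apply Theorem~\ref{main} with $m=3$. The singularities in Assumption~$(\square)$ other than $D_8$ all satisfy $A_k$ with $k\le 7\le 12$ and $D_l$ with $l\le 7$. Hence any type with $d_8=0$ cannot be maximizing, and we may assume $d_8\ge 1$. Next, Pokora's inequality (Theorem~\ref{HP}) with $k=2$, $d=3$ discards further types. These two filters leave the $33$ listed cases.

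\textbf{Step 2 (pairs of conics).} Proposition~\ref{seven} with $k=2$ gives $t_5+t_7+d_8\le 1$. Since $d_8\ge1$, this forces $t_5=t_7=0$ and $d_8=1$, which leaves exactly the seven types displayed after Proposition~\ref{seven}.

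\textbf{Step 3 (B\'ezout constructibility).} By the computer enumeration of PIM matrices, the types $(0,2,2,0,0,1,1)$, $(1,1,3,0,0,1,1)$ and $(0,2,0,0,0,2,1)$ admit no B\'ezout constructible PIM matrix, so they are not realizable.

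\textbf{Step 4 (the last four types).} These are excluded by the three propositions just proved:
\begin{itemize}
\item $(0,2,4,0,0,0,1)$ and $(2,0,2,0,0,2,1)$ would require two distinct common tangent lines to conics meeting with multiplicity $3$, which Lemma~\ref{lem: bitangent} forbids;
\item $(2,0,0,0,0,3,1)$ contradicts Proposition~\ref{marcin};
\item $(1,1,1,0,0,2,1)$ is ruled out by the coordinatization computation, where the discriminants $360c^2$ and $5c^2$ do not vanish.
\end{itemize}

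Every weak combinatorial type is therefore excluded, which proves the statement. I expect no real obstacle, since the substantive content lies in the earlier results. The only points needing care are the correctness of the computer-assisted steps, namely the Appendix enumeration and the PIM search in Step 3. I would make sure to state explicitly that these exhaust all cases.
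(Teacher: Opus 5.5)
Your proposal is correct and follows the paper's own argument: Theorem~\ref{main} and Theorem~\ref{HP} cut the list to $33$ types, Proposition~\ref{seven} cuts it to seven, and the PIM search cuts it to four, which are then excluded by Lemma~\ref{lem: bitangent}, Proposition~\ref{marcin} and the coordinatization computations. The three types you list as eliminated by the PIM search, $(0,2,0,0,0,2,1)$, $(0,2,2,0,0,1,1)$ and $(1,1,3,0,0,1,1)$, are exactly the ones the paper drops at that step.
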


\paragraph{Scenario 3: one line and three conics.}
According to our list in the Appendix, there are exactly \(77\) weak combinatorics of one conic and five lines. Using Theorem \ref{HP} and Theorem~\ref{main}, we can narrow down the number of cases to consider to a finite subcollection, which we enumerate below. We are left with $32$ cases.
\begin{multline}
\label{lc1}
\mathcal{C}(n_{2},n_{3},t_{3},t_{5},t_{7},d_{6},d_{8}) \in \bigg\{
(0,0,0,0,0,2,2),(0,0,1,1,0,2,1),(0,0,2,0,0,1,2),\\
(0,0,3,1,0,1,1),(0,0,4,0,0,0,2),(0,0,5,1,0,0,1),(0,1,0,0,0,0,3),\\
(0,1,0,2,0,1,1),(0,1,1,0,1,1,1),(0,1,1,1,0,0,2),(0,1,2,2,0,0,1),\\
(0,1,3,0,1,0,1),(0,2,0,1,1,0,1),(1,0,0,0,1,2,1),(1,0,0,1,0,1,2),\\
(1,0,1,0,0,0,3),(1,0,1,2,0,1,1),(1,0,2,0,1,1,1),(1,0,2,1,0,0,2),\\
(1,0,3,2,0,0,1),(1,0,4,0,1,0,1),(1,1,0,0,1,0,2),(1,1,0,3,0,0,1),\\
(1,1,1,1,1,0,1),(2,0,0,1,1,1,1),(2,0,0,2,0,0,2),(2,0,1,0,1,0,2),\\
(2,0,1,3,0,0,1),(2,0,2,1,1,0,1),(2,1,0,0,2,0,1),(3,0,0,2,1,0,1),\\
(3,0,1,0,2,0,1) \bigg\}. 
\end{multline}

As before, we use a computer program to determine for which of these weak combinatorics we can find the corresponding B\'ezout constructible ${\rm PIM}(\mathcal{C})$ matrices. It turns out that such a matrix exists only for the following $12$ cases:
\begin{multline*}
\mathcal{C}(n_{2},n_{3},t_{3},t_{5},t_{7},d_{6},d_{8}) \in \bigg\{
(0,0,0,0,0,2,2),(0,0,1,1,0,2,1),(0,0,4,0,0,0,2),\\
(0,1,0,0,0,0,3),(0,1,3,0,1,0,1),(1,0,0,1,0,1,2),(1,0,1,2,0,1,1),\\
(1,0,3,2,0,0,1),(1,1,1,1,1,0,1),(2,0,0,1,1,1,1),(2,1,0,0,2,0,1),\\
(3,0,1,0,2,0,1) \bigg\}. 
\end{multline*}

In the following propositions, we prove that none of these weak combinatorics can be realized over $\mathbb{C}$.

\begin{proposition}
There does not exist an arrangement $\mathcal{C} = C_1 \cup C_2 \cup C_3 \cup L$ of one line and three conics with a weak combinatorics
\begin{align*}
C(n_{2},n_{3},t_{3},t_{5},t_{7},d_{6},d_{8}) \in \big\{
(0,0,0,0,0,2,2),
(0,1,3,0,1,0,1),\\ (2,1,0,0,2,0,1), (3,0,1,0,2,0,1) \big\}.
\end{align*}
\end{proposition}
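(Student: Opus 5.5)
The plan is to exclude these four weak combinatorics with a purely local argument at the single line $L$, using Proposition~\ref{marcin}. We apply Proposition~\ref{marcin} with $C = C_1\cup C_2\cup C_3$, a sextic with smooth components, and $C' = C\cup L$ maximizing. It then says that $L$ meets $C$ in either $r=3$ points, or in exactly $r=2$ points that are both of type $D_6$, with $L$ tangent to one fixed conic at both of them.

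So for each of the four types we list the Bézout constructible ${\rm PIM}(\mathcal{C})$ matrices produced by the computer search, and read off the row entries for $C_iL$, $i=1,2,3$. Each row $C_iL$ sums to $2$. We then determine the number $r$ of distinct points on $L$, and the types of those points, in each matrix.

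The key observation concerns the type $(0,0,0,0,0,2,2)$. There are no $A_1$, $A_3$ or $D_4$ points, and since Bézout forces $L$ to pass through singular points of $C'$ only, every point of $L\cap C$ is a $D_6$ or $D_8$ point. A $D_8$ point consists of two conics meeting with multiplicity $3$ and a third branch transversal to both. If $L$ is that third branch at a $D_8$ point $P$, then $P$ is not $D_6$, so the second alternative of Proposition~\ref{marcin} fails. We must then have $r=3$, which we test against the row sums. If $r=2$ with a $D_8$ point, or $r\neq 2,3$, we get a contradiction. For the types involving $t_7$, the $A_7$ points are conic–conic contacts of order $4$ and do not lie on $L$. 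We therefore count how many of the remaining points ($A_1$, $A_3$, $D_4$, $D_6$, $D_8$) $L$ can hit. We expect, for instance, that in $(2,1,0,0,2,0,1)$ and $(3,0,1,0,2,0,1)$ the line meets $C$ in $4$ points, or in $2$ points not both $D_6$, which is excluded.

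For any matrix passing the count $r=3$, we fall back on the coordinatization method used in Scenario~2. We normalize the conics via \cite[Proposition 4.2.1]{Celal} at a $D_8$ or $A_7$ point, parametrize $L$ through the prescribed points, and impose the remaining tangency conditions to get a discriminant that is a nonzero multiple of $c^2$, as before. If two common tangency conditions on the same pair of conics arise, we use Lemma~\ref{lem: bitangent}.

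The main obstacle is case bookkeeping. The enumeration of PIM matrices per type comes from the computer, and we must check that every matrix violates Proposition~\ref{marcin}. If some matrix has $r=3$, for example in $(0,1,3,0,1,0,1)$ where $L$ could pass through the $D_4$ point and two $A_3$ tangencies, the explicit coordinate computation for three conics will be the hard part. There I would first try placing the $A_7$ contact of $C_1,C_2$ in normal form $x^2-yz$, $x^2-yz+by^2$, and deriving a contradiction from the position of $C_3$.
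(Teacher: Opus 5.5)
\textbf{Gap: your $r$-count leaves two of the four types open.}

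The count of intersection points works for half of the types. In $(2,1,0,0,2,0,1)$ and $(3,0,1,0,2,0,1)$ the two $A_7$ points use up $C_1C_2$ and $C_1C_3$, so the $D_8$ point must be $\{C_2,C_3,L\}$, and $L\cap C_1$ is disjoint from $L\cap(C_2\cup C_3)$. The unique B\'ezout constructible matrix then gives $r=4$. Proposition~\ref{marcin} excludes both types at once, as you expected. This is sharper than the paper's wording, which asserts $r=3$ in all four cases.

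The other two types are different. In $(0,0,0,0,0,2,2)$ the unique matrix makes $L$ the transversal branch at both $D_8$ points, and $L$ also passes through the $D_6$ point where $C_2$ and $C_3$ are tangent. In $(0,1,3,0,1,0,1)$, $L$ passes through the $D_8$ and $D_4$ points, which are the two points of $C_1\cap C_3$, and is tangent to $C_2$. So $L$ does not pass through two $A_3$ tangencies, as you guessed. In both types $r=3$, the count excludes nothing, and your proposal has no argument left. The coordinatization is only announced. Nothing justifies expecting the last condition to collapse to a nonzero multiple of $c^2$. These are genuinely different configurations from the Scenario~2 ones, for example three conics with an $A_7$ contact.

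The missing idea is that alternative (1) of Proposition~\ref{marcin} gives more than $r=3$: it forces $C=C_1\cup C_2\cup C_3$ to be free with exponents $(2,3)$. You only used the value of $r$ and dropped this freeness conclusion. Deleting $L$ from the two surviving matrices gives three-conic arrangements with singularities $2A_5+D_6+A_3$ and $A_7+A_5+A_1+2A_3$. Both have $\tau(C)=19=25-2\cdot 3$, exactly the value required of a free sextic with these exponents, so no Tjurina count rules them out.

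The paper closes these cases by observing that neither configuration appears in the classification of free arrangements of three conics, \cite[Theorem A]{MZZ}. To finish your proof you need that classification, or an actually executed coordinate computation for these two configurations. As written, that is precisely the part left undone.
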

\begin{proof}
    Each of these weak combinatorics have only one possible B\'ezout constructible ${\rm PIM}(\mathcal{C})$ matrix and in each case we have $\#((C_1 \cup C_2 \cup C_3) \cap L) = 3$. Therefore by Proposition \ref{marcin}, after deleting the line $L_1$ we should obtain a free arrangement of three conics. However, it can be checked that in each case the obtained arrangement is not on the list of all possible free arrangements of three conics presented in \cite[Theorem A]{MZZ}.
\end{proof}
\begin{lemma}\label{lem: a5}
    If two distinct smooth conics $C_1$ and $C_2$ intersect at singularities $A_5$ and $A_1$, then the equation of $C_2$ can be written in the form
    \[
    C_2: \lambda C_1 + \ell_1\ell_2 = 0,
    \]
    where $\lambda \in \mathbb{C} \setminus \{0\}$, $\ell_1$ is the equation of a line tangent to $Q_1$ at the $A_5$ singularity and $\ell_2$ is the equation of a line passing through both singularities.
\end{lemma}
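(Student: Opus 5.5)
We will prove this with the classical pencil argument: consider the pencil of conics spanned by $C_1$ and $C_2$ and locate a degenerate member. Here $Q_1$ in the statement means $C_1$. Write $C_1=\{q_1=0\}$ and $C_2=\{q_2=0\}$. An $A_5$ singularity at $P$ means $i_P(C_1,C_2)=3$, and an $A_1$ singularity at $Q$ means $i_Q(C_1,C_2)=1$. By B\'ezout these are the only two intersection points, since $3+1=4$. Let $\ell_1$ be the tangent line to $C_1$ at $P$ and $\ell_2$ the line $PQ$. Note that $Q\notin \ell_1$, because $\ell_1$ meets $C_1$ only at $P$. Hence $\ell_1\neq \ell_2$, and $\ell_1\ell_2$ is a reduced conic.

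\textbf{Step 1: the degenerate conic $\ell_1\ell_2$ meets $C_1$ in the right divisor.} On $C_1$, the line $\ell_1$ cuts out $2P$, and $\ell_2$ cuts out $P+Q$. So $\ell_1\ell_2$ cuts the divisor $3P+Q$ on $C_1$, which is exactly the intersection divisor $C_1\cdot C_2$.

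\textbf{Step 2: identify $q_2$ in the pencil.} Consider the restriction map $H^0(\mathcal{O}_{\mathbb{P}^2}(2))\to H^0(\mathcal{O}_{C_1}(2))$. Its kernel is spanned by $q_1$. Both $q_2|_{C_1}$ and $(\ell_1\ell_2)|_{C_1}$ are nonzero sections with the same divisor $3P+Q$, since $C_1\cong\mathbb{P}^1$ and $\mathcal{O}(2)|_{C_1}\cong\mathcal{O}_{\mathbb{P}^1}(4)$. Therefore they are proportional: $q_2|_{C_1}=\mu\,(\ell_1\ell_2)|_{C_1}$ with $\mu\neq 0$. It follows that $q_2-\mu\ell_1\ell_2=\lambda' q_1$ for some $\lambda'\in\mathbb{C}$. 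Rescaling $q_2$ by $\mu^{-1}$ (which does not change $C_2$) gives $q_2=\lambda q_1+\ell_1\ell_2$.

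\textbf{Step 3: $\lambda\neq 0$.} If $\lambda=0$, then $C_2=\ell_1\cup\ell_2$, which is not smooth. This contradicts the assumption that $C_2$ is a smooth conic.

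For a fully elementary version, we can instead use the normal form $C_1: x^2-yz$ with $P=[0:0:1]$ and $\ell_1=\{y=0\}$. Parametrize $C_1$ as $[t:t^2:1]$. The restriction $q_2(t,t^2,1)$ is a quartic in $t$ with roots $0$ (triple) and $t_Q$, so it equals $c\,t^3(t-t_Q)$. This matches $y\cdot(\text{equation of }PQ)$ evaluated on the parametrization, namely $t^2(t-t_Q)t$ up to a constant. The only delicate point, and the main obstacle, is justifying that the divisor data determines the restriction up to a scalar. This is the standard fact that a section of $\mathcal{O}_{\mathbb{P}^1}(4)$ is determined up to a scalar by its zeros with multiplicity. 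It also explains why $i_P=3$ forces the multiplicity of $P$ in $C_1\cdot(\ell_1\ell_2)$ to come from tangency of $\ell_1$ (contributing $2$) plus $\ell_2$ passing through $P$ (contributing $1$).
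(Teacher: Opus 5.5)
The paper states this lemma without any proof and uses it directly in the case $(0,0,1,1,0,2,1)$, so there is no argument of the authors to compare with. Your pencil argument is correct and complete, and it supplies the missing justification.

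All the essential points are in place:
\begin{itemize}
\item For two smooth branches, $A_5$ and $A_1$ mean $i_P(C_1,C_2)=3$ and $i_Q(C_1,C_2)=1$.
\item Because $C_1$ is smooth, these are exactly the vanishing orders of $q_2|_{C_1}$. So $q_2|_{C_1}$ and $(\ell_1\ell_2)|_{C_1}$ are nonzero sections of the same line bundle with the same divisor $3P+Q$.
\item Hence they are proportional. Their ratio is a nowhere-vanishing regular function on the connected projective curve $C_1$, so you do not even need the identification with $\mathcal{O}_{\mathbb{P}^1}(4)$.
\item The kernel of restriction is $\mathbb{C}\,q_1$ because $q_1$ is irreducible.
\item $\lambda\neq 0$ follows from the smoothness of $C_2$.
\end{itemize}
Your reading of $Q_1$ as $C_1$ is right. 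Your conclusion also matches how the lemma is used later in the paper. With the $A_5$ point at $[p:1:p^2]$ and the $A_1$ point at $[0:0:1]$, one gets $\ell_1=2px-p^2y-z$ and $\ell_2=x-py$, which are exactly the factors in the paper's equation for $C_2$.

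There is one cosmetic gap in your elementary variant. If $Q=[0:1:0]$, then $t_Q=\infty$, so $q_2(t,t^2,1)$ is the cubic $c\,t^3$ rather than a quartic, and $\ell_2=\{x=0\}$. You can fix this either by choosing coordinates with $Q\neq[0:1:0]$, or by using the homogeneous parametrization $[s:t]\mapsto[st:t^2:s^2]$. In either case the comparison of binary quartics then goes through verbatim.
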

\begin{proposition}
There does not exist an arrangement $\mathcal{C} = C_1 \cup C_2 \cup C_3 \cup L$ of one line and three conics with a weak combinatorics
\[
\mathcal{C}(n_{2},n_{3},t_{3},t_{5},t_{7},d_{6},d_{8}) = (0, 0, 1, 1, 0, 2, 1).\]
\end{proposition}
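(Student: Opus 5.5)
The plan follows the pattern of the previous propositions. First I would pin down all B\'ezout constructible ${\rm PIM}(\mathcal{C})$ matrices for the weak combinatorics $(0,0,1,1,0,2,1)$. The singular points are one $A_3$, one $A_5$, two $D_6$ and one $D_8$, and indeed $\tau = 3+5+2\cdot 6+8=28$. A branch pair with intersection index $3$ can only come from two conics. Hence the $D_8$ point involves a pair of conics tangent with multiplicity $3$, say $C_1C_2$, together with a third branch transversal to both, which is either $C_3$ or $L$. The $A_5$ point is an order-$3$ contact between two conics; by the argument of Proposition~\ref{seven} it must occur on a different pair, say $C_1C_3$ or $C_2C_3$. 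The row sums ($4$ for each conic pair, $2$ for each conic--line pair) then leave very little freedom for placing the $A_3$ (index $2$) and the two $D_6$ points (indices $1,1,2$). I expect only one or two matrices to survive, and I would list them explicitly.

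For each surviving matrix I would first count $r=\#\big((C_1\cup C_2\cup C_3)\cap L\big)$ and test it against Proposition~\ref{marcin}. If $r\ge 4$, or $r=2$ without $L$ being tangent to a single conic at two $D_6$ points, the matrix is excluded immediately. If $r=3$, the three conics must form a free arrangement with exponents $(2,3)$, and I would compare the induced configuration with the list in \cite[Theorem A]{MZZ}, exactly as in the previous proposition. A case that survives both tests is handled by coordinatization.

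For coordinatization I would take $C_1: x^2-yz=0$ and $C_2: x^2+by^2+cxy-yz=0$ with $c\neq 0$, by \cite[Proposition 4.2.1]{Celal}, with the $D_8$ point at $[0:0:1]$. The conic $C_3$ meets one of $C_1, C_2$ at an $A_5$ point and presumably at one transversal point. I would write it via Lemma~\ref{lem: a5} as $C_3:\lambda C_1+\ell_1\ell_2=0$, where $\ell_1$ is tangent to $C_1$ at the $A_5$ point and $\ell_2$ joins the two intersection points; the $A_5$ point is parametrized as $[t:t^2:1]$ on $C_1$. Next I would impose the remaining incidences: $C_3$ (or $L$) passes through $[0:0:1]$ transversally, which is the third branch of the $D_8$; the prescribed transversal and tangential intersections of $C_3$ with the other conic hold; and $L$ is tangent or transversal at the $D_6$ and $A_3$ points. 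Each condition is either a linear incidence or a vanishing discriminant of a binary quadratic obtained by restricting to a line.

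The main obstacle is this last elimination. Unlike the two-conic case, the free parameters $b,c,\lambda,t$ together with the line make the system genuinely multivariate. What I would try first is to use the incidence conditions to solve linearly for $\lambda$ and for the coefficients of $\ell_2$ and $L$. I would then reduce the tangency conditions to one or two discriminant equations in $b,c,t$. Finally, I would show that these force either $c=0$, or a coincidence of singular points or components (for instance $C_3=C_1$, or $L$ passing through the $D_8$ point), which contradicts the combinatorics.
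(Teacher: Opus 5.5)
Your overall route (fix the PIM, then coordinatize with \cite[Proposition 4.2.1]{Celal} and Lemma~\ref{lem: a5}) is the paper's. However, the shortcut you plan to try first misfires here, and the computation that actually proves the statement is only announced.

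Completing the enumeration gives a single matrix. The third branch at the $D_8$ point must be the third conic: if it were $L$, a short count with the row sums shows that one $A_3$ and two $D_6$ points cannot supply the remaining incidences. Use your labelling: $C_1,C_2$ have contact $3$ at $[0:0:1]$, and $C_3$ has the $A_5$ contact with $C_1$ and passes transversally through $[0:0:1]$. Then $C_3$ is tangent to $C_2$ at the $A_3$ point. The line $L$ meets $C_1\cup C_2\cup C_3$ only at the two $D_6$ points: it is tangent to $C_1$ at the second point of $C_1\cap C_2$, and tangent to $C_3$ at a transversal point of $C_2\cap C_3$. So $r=2$, and your filter would discard this matrix because $L$ is not tangent to a single conic at both points.

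That literal reading of the last clause of Proposition~\ref{marcin} cannot be used. By B\'ezout a line is never tangent to one conic at two distinct points, so that reading would make case 2 vacuous for every conic--line arrangement. Moreover, the argument given there only shows that both points are $D_6$ points coming from $A_1$ points of $C$ with $L$ tangent to one branch, which is exactly the present configuration. The comparison with \cite[Theorem A]{MZZ} is not available either, since for $r=2$ the three conics form a plus-one generated arrangement, not a free one. So this case has to be killed by coordinates, as in the paper.

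That elimination is what your proposal is missing, and it is more rigid than you anticipate, since $L$ carries no free parameter. Write the $A_5$ point as $[p:1:p^2]$. Your chart $[t:t^2:1]$ misses $[0:1:0]$, which is not excluded a priori; only $[0:0:1]$ is. Then $C_3=\lambda(x^2-yz)+(x-py)(2px-p^2y-z)$, and $L$ is the tangent to $C_1$ at $[-bc:c^2:b^2]$, namely $2bcx+b^2y+c^2z=0$. The steps are:
\begin{enumerate}
\item With $q=b/c$, tangency of $L$ to $C_3$ gives the discriminant $(p+q)^3(p+q-4\lambda)$. Since $p+q=0$ would make the $A_5$ point coincide with that $D_6$ point, you get $\lambda=(p+q)/4$, and the point of tangency is $[2p-q:3:-4pq-q^2]$.
\item Requiring this point to lie on $C_2$ gives $2c^2(pc+b)(3c^2+2cp+2b)=0$, hence $b=-(3c^2+2cp)/2$.
\item Substituting the parametrization $[u:1:u^2+cu+b]$ of $C_2$ into the equation of $C_3$ gives, up to a constant, $(c+2p-2u)\big(8u^2+(12c-16p)u+8p^2-12cp-9c^2\big)$. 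The linear factor is the $D_6$ point, so the $A_3$ tangency requires a double root of the quadratic. Its discriminant is $432c^2\neq 0$, a contradiction.
\end{enumerate}
Without this computation (or an equivalent one) the proposal does not yet prove the statement.
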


\begin{proof}
    The following ${\rm PIM}(\mathcal{C})$ matrix is the only admissible one which is B\'ezout constructible:
    \[
    \begin{blockarray}{cccccc}
     & A_3 & A_5 & D_6 & D_6 & D_8 \\
    \begin{block}{c|ccccc|}
      C_{1}C_{2} & 0 & 3 & 0 & 0 & 1 \\
      C_{1}C_{3} & 0 & 0 & 0 & 1 & 3 \\
      C_{1}L & 0 & 0 & 0 & 2 & 0 \\
      C_{2}C_{3} & 2 & 0 & 1 & 0 & 1 \\
      C_{2}L & 0 & 0 & 2 & 0 & 0 \\
      C_{3}L & 0 & 0 & 1 & 1 & 0 \\
    \end{block}
    \end{blockarray}
    \]
    From \cite[Proposition 4.2.1]{Celal}, we can assume that
    \[
    C_1 : x^2 - yz = 0, \quad C_3 : x^2 + by^2 + cxy - yz = 0,
    \]
    with $b, c \in \mathbb{C}$ and $c \neq 0$. These conics intersect at points $[0 : 0 : 1]$ (our $D_8$ singularity) and $[-bc : c^2 : b^2]$ (one of the $D_6$ singularities) with multiplicities $3$ and $1$, respectively. The $A_5$ singularity lies on $C_1$ and therefore it has coordinates $[p : 1 : p^2]$ for some $p \in \mathbb{C}$, $p \neq \frac{-b}{c}$. From Lemma \ref{lem: a5}, the equation of $C_2$ can be written as
    \begin{align*}
    C_2 &:\lambda(x^2 - yz) + (x - py)(2px - p^2y - z) \\
    &= (\lambda + 2p)x^2 + p^3y^2 - 3p^2xy - xz + (p - \lambda)yz = 0.
    \end{align*}
    for some $\lambda \in \mathbb{C}\setminus\{0\}.$ Based on the ${\rm PIM}(\mathcal{C})$ matrix, the line $L$ is tangent to $C_1$ at $[-bc : c^2 : b^2]$ and therefore it is given by $2bcx + b^2y + c^2z = 0$. The second $D_6$ singularity is one of the intersection points of $C_2$ and $C_3$ and the line is tangent to $C_2$ at this point. Therefore the system of equations
    \[
    \left\{\begin{array}{ll}
    C_2: & (\lambda + 2p)x^2 + p^3y^2 - 3p^2xy - xz + (p - \lambda)yz = 0, \\
    L: & 2bcx + b^2y + c^2z = 0
    \end{array}\right.
    \]
    needs to have exactly one solution. From there, we obtain
    \begin{equation}\label{quadr}
    (\lambda + 2p + 2q)x^2 + (2\lambda q - 3p^2 - 2pq + q^2)xy + (\lambda q^2 + p^3 - pq^2)y^2 = 0,
    \end{equation}
    where $q = \frac{b}{c}$. The determinant of the above equation is equal to
    \[
    (p + q)^3(p + q - 4\lambda).
    \]
    and since $L$ has to be tangent to $C_2$, this determinant has to be equal to $0$. Note that $p + q = 0$ implies $p = -\frac{b}{c}$, contradicting our assumptions. Therefore we have $\lambda = \frac{1}{4}(p + q)$. After factorization, the equation \eqref{quadr} reduces to
    \[
    \frac{1}{4}(p + q)\big(3x - (2p-q)y\big)^2 = 0,
    \]
    hence $3x - (2p-q)y = 0$ and therefore the coordinates of the second $D_6$ singularity are $[2p - q : 3 : -4pq-q^2]$, which can be rewritten as $[2pc^2 - bc : 3c^2 : -4pbc - b^2]$. Since this point has to also lie on $C_3$, we obtain yet another equation:
    \[
    (2pc^2 - bc)^2 + b(3c^2)^2 + c(2pc^2 - bc)(3c^2) - 3c^2(-4pbc-b^2) = 0,
    \]
    which factorizes into $2c^2(pc + b)(3c^2 + 2cp + 2b) = 0$. Since $c \neq 0$ and $p \neq -\frac{b}{c}$, we obtain $b = \frac{-3c^2-2cp}{2}$. As a result, the equation of $C_2$ can be rewritten as
    \[
    C_2: (16p-3c)x^2 + 8p^3y^2 - 24p^2xy - 8xz + (3c + 8p)yz = 0.
    \]
    The only condition left for this arrangement to exist is that $C_2$ is tangent to $C_3$ at some point, the remaining $A_3$ singularity. Since $C_3$ can be parametrized as
    \[
    C_3 = \{[0 : 0 : 1]\} \cup \{[u : 1 : u^2 + b + cu] : u \in \mathbb{C}\},
    \]
    we can substitute this parametrization to the equation of $C_2$ ($x = u$, $y = 1$, $z = u^2 + b + cu$) to determine the intersection points of $C_2$ and $C_3$. Using the equality $b = \frac{-3c^2-2cp}{2}$, we obtain
    \[
    -8u^3+(24p-8c)u^2+(15c^2+16cp-24p^2)u+\frac{-9c^3-30c^2p-16cp^2+16p^3}{2}=0,
    \]
    which can be factorized as
    \[
    \frac{1}{2}(c+2p-2u)\big(8u^2+(12c-16p)u-9c^2-12cp+8p^2\big) = 0.
    \]
    In order to have the point of tangency, the determinant of the second factor has to be equal to $0$. However, it is equal to $432c^2$ and by our assumptions $c \neq 0$, which gives a contradiction.
\end{proof}

\begin{proposition}
There does not exist an arrangement $\mathcal{C} = C_1 \cup C_2 \cup C_3 \cup L$ of three conics and one line, having the weak-combinatorial type
\[
\mathcal{C}(n_{2},n_{3},t_{3},t_{5},t_{7},d_{6},d_{8}) = (0, 0, 4, 0, 0, 0, 2).\]
\end{proposition}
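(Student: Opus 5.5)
The plan is to first pin down all B\'ezout constructible ${\rm PIM}(\mathcal{C})$ matrices for this weak combinatorics, and then kill each one either with Proposition~\ref{marcin} or by coordinatization, as in the previous propositions. The type has two $D_{8}$ points and four $A_{3}$ points, and no other singularities.

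\textbf{Step 1: the matrices.} Each $D_{8}$ point has three branches with pairwise intersection numbers $1,1,3$. Since a line meets a conic with multiplicity at most $2$, the entry $3$ must come from a pair of conics, so each $D_{8}$ point is a $3$-fold tangency of two conics. The third branch is either the line $L$ or the remaining conic. Each $A_{3}$ point contributes a single entry $2$, coming either from a conic--conic tangency or from $L$ tangent to a conic. The row sums are $4$ for the three conic--conic pairs and $2$ for the three conic--line pairs. The column contributions are $5+5+4\cdot 2=18=12+6$, so everything is consistent and only a handful of matrices survive (I would confirm the list by the same computer search used above). A useful constraint: two conics share at most one point of contact of order $3$. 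Hence the two $D_{8}$ points use two different conic pairs, or the same pair with its remaining intersection forced to be a simple point. The latter is impossible since there are no $A_{1}$ or $D_{4}$ points, so the two $D_{8}$ points use two different pairs.

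\textbf{Step 2: use Proposition~\ref{marcin}.} For each matrix, compute $r=|L\cap(C_{1}\cup C_{2}\cup C_{3})|$. There are no $D_{6}$ points, so alternative (2) of Proposition~\ref{marcin} is impossible, and we must have $r=3$. If $r=3$, then $C_{1}\cup C_{2}\cup C_{3}$ must be a free arrangement of three conics with exponents $(2,3)$. Its singularities are read off from the matrix by deleting $L$: a $D_{8}$ through $L$ becomes an $A_{5}$, and a tangency of $L$ disappears. I then compare with the classification \cite[Theorem A]{MZZ}. I expect most matrices to die here: either $r\neq 3$ (e.g.\ $L$ tangent to two conics gives $r\le 4$ with the wrong contribution count), or the residual three-conic arrangement is not on the list. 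One can also check the Tjurina count $\tau(C)=18$ required for exponents $(2,3)$ against the residual singularities.

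\textbf{Step 3: coordinatize the survivors.} For any surviving matrix, take the normal form \cite[Proposition 4.2.1]{Celal}:
\[
C_{1}: x^{2}-yz=0,\qquad C_{2}: x^{2}+by^{2}+cxy-yz=0,\qquad c\neq 0,
\]
for a pair giving one $D_{8}$ at $[0:0:1]$. If $L$ is bitangent to $C_{1}$ and $C_{2}$, Lemma~\ref{lem: bitangent} fixes it explicitly. Otherwise, parametrize the relevant points on $C_{1}$ as $[p:1:p^{2}]$. The third conic is then written through the pencil determined by its $3$-fold contact point (similar to Lemma~\ref{lem: a5}, using $\lambda C_{i}+\ell^{2}\ell'$-type forms adapted to order-$3$ contact, i.e.\ $\lambda C_{i}+\ell\,\ell'$ with $\ell$ tangent and $\ell'$ through the point). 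The remaining tangency conditions become vanishing discriminants of binary quadratics. I expect, as in the previous cases, that after successive eliminations the final discriminant is a nonzero constant times a power of $c$ (or of some forbidden factor such as $p+b/c$), yielding a contradiction.

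\textbf{Main obstacle.} The main obstacle is Step 3 in the case where the second $D_{8}$ involves the third conic tangent to order $3$ with $C_{1}$ or $C_{2}$ at a point other than $[0:0:1]$. There the parametrization of $C_{3}$ has more free parameters, and the elimination may not factor cleanly. I would first try to push such cases through Step 2, hoping Proposition~\ref{marcin} and \cite{MZZ} already exclude them.
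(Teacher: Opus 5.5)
Your proposal has a genuine gap, and it sits at the only configuration that actually occurs. If you carry out Step~1, every matrix in which $L$ is a branch of a $D_8$ point dies: some row is left needing a simple intersection point, and there is no $A_1$, $D_4$ or $D_6$ to host it. What remains is a unique B\'ezout constructible matrix, up to relabeling:
\begin{itemize}
\item both $D_8$ points are made of the three conics;
\item $C_1$ has order-$3$ contact with $C_2$ at one $D_8$ and with $C_3$ at the other;
\item $C_2$ and $C_3$ cross transversally at both $D_8$ points and are tangent at a fourth point, giving an $A_3$;
\item $L$ is tangent to $C_1,C_2,C_3$ at three further distinct points.
\end{itemize}
Note that your Step~1 remark is wrong: you claim the leftover simple point of a pair with order-$3$ contact would need an $A_1$ or $D_4$, but here it sits at the other $D_8$.

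For this matrix $r=3$. The residual curve $C_1\cup C_2\cup C_3$ has singularities $2D_8+A_3$, so $\tau=19=(6-1)^2-2\cdot 3$. This is exactly the Tjurina number of a free sextic with exponents $(2,3)$. Your value $\tau(C)=18$ is the plus-one generated value with $\delta_3=\delta_2$, and using it would produce a false exclusion. In fact this residual arrangement is free and appears in \cite[Proposition 3.20]{MZZ}. So Step~2 excludes nothing, and the statement rests entirely on Step~3. That is precisely the case you call the main obstacle and leave at ``I expect''.

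The missing idea is that this residual arrangement is rigid, so no free parameters survive. You can cite \cite[Proposition 3.20]{MZZ}, or derive the normal form in your own setup, as follows.
\begin{itemize}
\item Use the automorphisms of $C_1$ fixing $[0:0:1]$ to move the simple point of $C_1\cap C_2$ to $[0:1:0]$ (i.e.\ $b=0$), and to rescale $c$ to $-2$.
\item Conics with contact of order $\geq 3$ with $C_1$ at $[0:1:0]$ that pass through $[0:0:1]$ are $x^2+\alpha xz-yz$ with $\alpha\neq 0$.
\item Tangency to $C_2\colon x^2-2xy-yz=0$ at a third point forces $\alpha(\alpha-2)=0$, so $C_3\colon x^2+2xz-yz=0$.
\end{itemize}
Since $L$ is tangent to $C_1$ and $C_2$ at distinct points, Lemma~\ref{lem: bitangent} with $b=0$, $c=-2$ forces $L\colon 4x+y+4z=0$. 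Substituting $y=-4x-4z$ into $C_3$ gives $x^2+6xz+4z^2$, whose discriminant is $20\neq 0$. Hence $L$ is not tangent to $C_3$, a contradiction; this is the paper's argument.
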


\begin{proof}
The following ${\rm PIM}(\mathcal{C})$ matrix is the only admissible one which is B\'ezout constructible:
\[
\begin{blockarray}{ccccccc}
 & A_3 & A_3 & A_3 & A_3 & D_8 & D_8 \\
\begin{block}{c|cccccc|}
  C_{1}C_{2} & 0 & 0 & 0 & 0 & 1 & 3 \\
  C_{1}C_{3} & 0 & 0 & 0 & 0 & 3 & 1 \\
  C_{1}L & 0 & 0 & 2 & 0 & 0 & 0 \\
  C_{2}C_{3} & 0 & 0 & 0 & 2 & 1 & 1 \\
  C_{2}L & 2 & 0 & 0 & 0 & 0 & 0 \\
  C_{3}L & 0 & 2 & 0 & 0 & 0 & 0 \\
\end{block}
\end{blockarray}
\]
By deleting $L$ from $\mathcal{C}$, we obtain a free arrangement of three conics $C_1$, $C_2$ and $C_3$, with one $A_3$ singularity and two $D_8$ singularities. From \cite[Proposition 3.20]{MZZ}, we may assume that 
\begin{align*}
    &C_1: x^2 - yz = 0, \\
    &C_2: x^2 - 2xy - yz = 0, \\
    &C_3: x^2 + 2xz - yz = 0.
\end{align*}
The line $L$ has to be tangent to all of these three conics. From the proof of Lemma \ref{lem: bitangent} for $C_1$ and $C_2$ (with $b = 0$ and $c = -2$) we can deduce that
\[L: 4x + y + 4z = 0.\]
However, this line is not tangent to $C_3$.
\end{proof}

\begin{proposition}
There does not exist an arrangement $\mathcal{C} = C_1 \cup C_2 \cup C_3 \cup L$ of three conics and one line, having the weak-combinatorial type
\[
\mathcal{C}(n_{2},n_{3},t_{3},t_{5},t_{7},d_{6},d_{8}) = (0, 1, 0, 0, 0, 0, 3).\]
\end{proposition}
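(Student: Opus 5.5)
The plan is to first pin down the unique B\'ezout constructible ${\rm PIM}(\mathcal{C})$ for this weak type, then reduce to an arrangement of three conics via Proposition~\ref{marcin}, and finally exclude that configuration by coordinatization.

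\textbf{Step 1: the combinatorics.} The type has $\tau=4+3\cdot 8=28$ and $3+3\cdot 5=18=21-3$ intersection points counted as above. Each $D_8$ needs a pair of branches meeting with multiplicity $3$. This cannot be a line--conic pair, so it is a pair of conics. By Proposition~\ref{seven} we have $d_8=3\le\binom{3}{2}$, so each pair of conics $C_aC_b$ carries exactly one $D_8$, at which $C_a,C_b$ meet with multiplicity $3$.

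Next I would show that the third branch at every $D_8$ must be $L$. If the third branch at the $D_8$ of $C_1C_2$ were $C_3$, then $C_1C_3$ and $C_2C_3$ would each receive $1$ there, in addition to $3$ at their own $D_8$. Adding the $D_4$ contribution, and using that $D_4$ needs three branches while $L$ has only $2$ intersection with each conic, the row sums break Bezout. I would confirm this with the same computer enumeration used before.

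This should leave the unique matrix in which $L$ passes through the three $D_8$ points. There $L$ is transversal to both conics, so each line--conic row equals $1+1=2$. The $D_4$ is an ordinary triple point of $C_1,C_2,C_3$, so each conic row equals $3+1=4$.

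\textbf{Step 2: reduction.} Now $r=|(C_1\cup C_2\cup C_3)\cap L|=3$. By Proposition~\ref{marcin}, $C=C_1\cup C_2\cup C_3$ is free with exponents $(2,3)$. Deleting $L$ turns each $D_8$ into an $A_5$. So $C$ has three $A_5$ points and one $D_4$, with $\tau(C)=19=25-6$, consistent with freeness.

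This arrangement is exactly the one in the example following Definition~\ref{def: PIM}, so, unlike the earlier cases, I do not expect \cite[Theorem A]{MZZ} alone to rule it out. The remaining obstruction is geometric: the three $A_5$ points must be collinear.

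\textbf{Step 3: coordinatization.} Using \cite[Proposition 4.2.1]{Celal}, normalize
\[C_1: x^2-yz=0,\qquad C_2: x^2+by^2+cxy-yz=0,\qquad c\neq 0.\]
Then the $A_5$ point of $C_1C_2$ is $[0:0:1]$ and their transversal point $[-bc:c^2:b^2]$ is the $D_4$ point.

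The $A_5$ point of $C_1C_3$ is a point $[p:1:p^2]$ on $C_1$. By Lemma~\ref{lem: a5}, this $A_5$ together with the $D_4$ as the $A_1$ gives
\[C_3=\lambda(x^2-yz)+(x-py)(2px-p^2y-z),\qquad \lambda\neq 0.\]
Two conditions remain. First, $C_3$ must meet $C_2$ with multiplicity $3$ at a single point $Q$. This would be checked by substituting the parametrization $[u:1:u^2+b+cu]$ of $C_2$ into $C_3$. Since the $D_4$ root is already known, the cofactor is a cubic in $u$, and we require it to be a perfect cube. Second, $Q$ must lie on the line through $[0:0:1]$ and $[p:1:p^2]$, namely $x=py$.

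I would solve these equations in $(b,c,p,\lambda)$. I expect them to force $c=0$, or $p=-b/c$ (the $A_5$ point colliding with the $D_4$), or $\lambda=0$, each contradicting the assumptions, just as in the preceding propositions.

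The main obstacle is the perfect-cube condition on the cubic. My first attempt would be to impose $Q$ on $x=py$ first: this fixes $u=p$, so the cubic must equal $\alpha(u-p)^3$. Comparing coefficients then gives linear equations in $\lambda$ and $b$, and I hope these reduce to a nonzero multiple of a power of $c$, as happened before.
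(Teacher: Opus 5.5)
\textbf{Step 1 is false, and this leaves a whole case uncovered.} Each pair $C_aC_b$ does carry exactly one $D_8$ with contact $3$. At most one $D_8$ can have a conic as its third branch, since two would push some conic--conic row to $3+1+1$. But one such $D_8$ is compatible with B\'ezout.

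Suppose the $D_8$ of $C_2C_3$ has $C_1$ as third branch. Then the rows $C_1C_2$ and $C_1C_3$ are saturated at $3+1$. So the $D_4$ cannot contain $C_1$ and must be $C_2\cap C_3\cap L$, while the $D_8$'s of $C_1C_2$ and $C_1C_3$ take $L$ as third branch. Every row then closes: $C_2C_3=3+1$ and $C_1L=C_2L=C_3L=1+1$. Your remark that $L$ meets each conic with total multiplicity $2$ is no obstruction, because each line--conic row is exactly $2$.

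So there are two B\'ezout constructible ${\rm PIM}(\mathcal{C})$ matrices, and you only treat the one in which $L$ passes through the three $D_8$ points. In the missing case one also has $r=3$. Proposition~\ref{marcin} then gives a free arrangement of three conics with $A_1+2A_5+D_8$, and you must still show that the $A_1$ point and the two $A_5$ points are not collinear. The paper does this using the normal form of \cite[Proposition 3.24]{MZZ}.

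\textbf{Step 3 is also set up incorrectly.} In Lemma~\ref{lem: a5}, $\ell_2$ must join the $A_5$ point $[p:1:p^2]$ to the $A_1$ point of $C_1C_3$, which here is the $D_4$ point $[-bc:c^2:b^2]$. Your $x-py$ instead joins $[p:1:p^2]$ to $[0:0:1]$; you carried it over from the $(0,0,1,1,0,2,1)$ case, where the transversal point really is $[0:0:1]$. With your formula, $C_3$ passes through $[0:0:1]$ and misses the $D_4$. So the ``known $D_4$ root'' is not a root, and the equations describe the wrong configuration.

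Once corrected, your plan works and is short:
\begin{itemize}
\item Put $q=-b/c$, $\ell_2=(p+q)x-pqy-z$ and $C_3=C_1+\mu\ell_1\ell_2$ with $\mu\neq 0$.
\item Substituting $[u:1:u^2+c(u-q)]$ into $C_3$ gives $(u-q)\,g(u)$ with $g(u)=\mu\big((u-p)^2+c(u-q)\big)(u-p+c)-c$.
\item The contact point of $C_2,C_3$ must lie on $L=\{x=py\}$, i.e.\ $u=p$, so $g$ would have to be $\mu(u-p)^3$.
\item But the coefficient of $(u-p)^2$ in $g$ is $2c\mu\neq 0$, a contradiction.
\end{itemize}
This is a self-contained substitute for the paper's use of \cite[Proposition 3.17]{MZZ} and its non-collinearity check. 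Asking only for a perfect cube gives $p-q=c/3$ and $\mu=27/c^2$, matching that normal form. However, it disposes of only one of the two matrices.
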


\begin{proof}
We have two possible B\'ezout constructible ${\rm PIM}(\mathcal{C})$ matrices. The first one is the following:
\[
\begin{blockarray}{ccccc}
 & D_4 & D_8 & D_8 & D_8 \\
\begin{block}{c|cccc|}
  C_{1}C_{2} & 0 & 3 & 0 & 1 \\
  C_{1}C_{3} & 0 & 0 & 3 & 1 \\
  C_{1}L & 0 & 1 & 1 & 0 \\
  C_{2}C_{3} & 1 & 0 & 0 & 3 \\
  C_{2}L & 1 & 1 & 0 & 0 \\
  C_{3}L & 1 & 0 & 1 & 0 \\
\end{block}
\end{blockarray}
\]
By deleting $L$ from $\mathcal{C}$, we obtain a free arrangement of three conics with one $A_1$ singularity, two $A_5$ singularities and one $D_8$ singularity. From \cite[Proposition 3.24]{MZZ} we may assume that
\begin{align*}
    &C_1: 12x^2 + (i\sqrt{3}+3)xz - 12yz = 0,\\
    &C_2: x^2 - yz = 0, \\
    &C_3: x^2 + 18(i\sqrt{3}+3)y^2 - 18xy - yz = 0.
\end{align*}
The line $L$ needs to pass through the $A_1$ singularity and two $A_5$ singularities. From the above equations it can be deduced that the coordinates of these three points are
\[\big[i\sqrt{3}+3 : 1 : 6i\sqrt{3}+6\big], \; [0 : 1 : 0] \text{ and } \big[6 : 1 : 18i\sqrt{3} - 18\big].\]
However, these points are not collinear.

The second possible ${\rm PIM}(\mathcal{C})$ matrix is
\[
\begin{blockarray}{ccccc}
 & D_4 & D_8 & D_8 & D_8 \\
\begin{block}{c|cccc|}
  C_{1}C_{2} & 1 & 0 & 3 & 0 \\
  C_{1}C_{3} & 1 & 0 & 0 & 3 \\
  C_{1}L & 0 & 0 & 1 & 1 \\
  C_{2}C_{3} & 1 & 3 & 0 & 0 \\
  C_{2}L & 0 & 1 & 1 & 0 \\
  C_{3}L & 0 & 1 & 0 & 1 \\
\end{block}
\end{blockarray}
\]
By deleting $L$ from $\mathcal{C}$, we obtain a free arrangement of three conics with one $D_4$ singularity and three $A_5$ singularities. From \cite[Proposition 3.17]{MZZ}, we may assume that
\begin{align*}
    &C_1: x^2 - yz = 0, \\
    &C_2: 3x^2 + y^2 + 3xy - 3yz = 0, \\
    &C_3: x^2 + 27z^2 + 9xz - yz = 0. 
\end{align*}
The coordinates of three $A_5$ singularities are
\[[0 : 0 : 1], \; [0 : 1 : 0] \text{ and } [-6 : 9 : 1].\]
All three points has to lie on $L$. However, they are not collinear.
\end{proof}

\begin{proposition}
There does not exist an arrangement $\mathcal{C} = C_1 \cup C_2 \cup C_3 \cup L$ of three conics and one line, having the weak-combinatorial type
\[
\mathcal{C}(n_{2},n_{3},t_{3},t_{5},t_{7},d_{6},d_{8}) = (1, 0, 0, 1, 0, 1, 2).\]
\end{proposition}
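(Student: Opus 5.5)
The weak combinatorics $(1,0,0,1,0,1,2)$ consists of one $A_1$, one $A_5$, one $D_6$ and two $D_8$ points. The plan is to pin down the $\mathrm{PIM}(\mathcal{C})$ matrix combinatorially, delete the line using Proposition~\ref{marcin}, and then rule out the remaining configuration with the explicit normal forms from \cite{MZZ}, as in the previous propositions.

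\emph{Step 1: the conic--conic entries.} Every point of type $A_5$ or $D_8$ contains a pair of conics meeting with multiplicity $3$, and here $t_5+d_8=3=\binom{3}{2}$. So, as in the proof of Proposition~\ref{seven}, each pair $C_aC_b$ has exactly one contact of order $3$. By Bézout, each pair then has exactly one further transversal intersection. In particular, no two conics are tangent with multiplicity $2$.

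\emph{Step 2: the $D_6$ point and the line $L$.} A $D_6$ point needs a tangency of order $2$ plus a third transversal branch. By Step 1, the tangency must be $L$ tangent to some conic $C_i$, with a second conic $C_j$ passing transversally through the point. The row sums $C_aL=2$ then leave only a few placements of $L$ through the $D_8$ points (as the transversal third branch) and through the $A_1$ point. I expect the computer/hand enumeration to give a unique admissible matrix. In it, $L$ passes through the $D_6$ point, exactly one $D_8$ point, and the $A_1$ point (which is then $L\cap C_k$), so $r=|(C_1\cup C_2\cup C_3)\cap L|=3$. If other matrices survive, each is handled the same way.

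\emph{Step 3: deleting $L$.} By Proposition~\ref{marcin}, $C=C_1\cup C_2\cup C_3$ is free with exponents $(2,3)$, so $\tau(C)=19$. After removing $L$, the $D_6$ point becomes an $A_1$, the $D_8$ point on $L$ becomes an $A_5$, the $A_1$ point on $L$ disappears, and the remaining $A_5$ and $D_8$ points are unchanged. This gives singularities $A_1,A_5,A_5,D_8$, and indeed $1+5+5+8=19$. This is exactly the configuration of \cite[Proposition 3.24]{MZZ}, so we may use the explicit equations of $C_1,C_2,C_3$ already written down in the first case of the preceding proposition.

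\emph{Step 4: the final contradiction.} The line $L$ must be the tangent line to the appropriate conic $C_i$ at the $A_1$ point of $C$, which is now fixed explicitly. It must also pass through one of the two explicit $A_5$ points, namely the one becoming $D_8$, with the relabelling fixed by the matrix. I will check directly that the tangent lines at the $A_1$ point to either conic through it miss both $A_5$ points, using the explicit coordinates from the previous proof. This gives the contradiction.

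The main obstacle is Step 2: ensuring that every Bézout-constructible matrix has been found and that each leads to $r=3$. If some matrix gave $r=2$, I would use the second alternative of Proposition~\ref{marcin}, which forces two $D_6$ points; that is impossible here since $d_6=1$.
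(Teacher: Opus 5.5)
Your proposal is correct and follows essentially the same route as the paper: the unique B\'ezout constructible matrix has $L$ passing through the $D_6$ point, one $D_8$ point and the $A_1$ point. Proposition~\ref{marcin} then identifies $C_1\cup C_2\cup C_3$ as the free arrangement with singularities $A_1,A_5,A_5,D_8$ from \cite[Proposition 3.24]{MZZ}, and the tangent line at its $A_1$ point misses the $A_5$ point that must become $D_8$. The uniqueness you predicted in Step~2 does hold and can be seen by hand: $C_iL$ is used up at the $D_6$ point, which forces $L$ through the contact point of $C_j,C_k$ and through the $A_1$ point. Testing both tangent lines against both $A_5$ points is a harmless superset of the paper's single check, and all four incidences do fail.
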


\begin{proof}
The following ${\rm PIM}(\mathcal{C})$ matrix is the only admissible one which is B\'ezout constructible:
\[
\begin{blockarray}{cccccc}
 & A_1 & A_5 & D_6 & D_8 & D_8 \\
\begin{block}{c|ccccc|}
  C_{1}C_{2} & 0 & 0 & 0 & 3 & 1 \\
  C_{1}C_{3} & 0 & 0 & 1 & 0 & 3 \\
  C_{1}L & 0 & 0 & 1 & 1 & 0 \\
  C_{2}C_{3} & 0 & 3 & 0 & 0 & 1 \\
  C_{2}L & 1 & 0 & 0 & 1 & 0 \\
  C_{3}L & 0 & 0 & 2 & 0 & 0 \\
\end{block}
\end{blockarray}
\]
By deleting $L$ from $\mathcal{C}$, we obtain a free arrangement of three conics with one $A_1$ singularity, two $A_5$ singularities and one $D_8$ singularity. From \cite[Proposition 3.24]{MZZ} we may assume that
\begin{align*}
    &C_1: x^2 + 18(i\sqrt{3}+3)y^2 - 18xy - yz = 0, \\
    &C_2: 12x^2 + (i\sqrt{3}+3)xz - 12yz = 0, \\
    &C_3: x^2 - yz = 0.
\end{align*}
The conics $C_1$ and $C_3$ intersect at points $[0 : 0 : 1]$ and $\big[i\sqrt{3}+3 : 1 : 6i\sqrt{3}+6\big]$, with respective multiplicities $3$ and $1$. Based on the ${\rm PIM}(\mathcal{C})$ matrix, the line $L$ has to be tangent to $C_3$ at $\big[i\sqrt{3}+3 : 1 : 6i\sqrt{3}+6\big]$, hence this line is given by
\begin{equation}\label{arr6_c4}L : 2(i\sqrt{3} + 3)x - (6i\sqrt{3} + 6)y - z = 0.
\end{equation}
Moreover, according to the ${\rm PIM}(\mathcal{C})$ matrix, the line $L$ also passes through the point of tangency of $C_1$ and $C_2$. This point has coordinates
\[\big[6 : 1 : 18i\sqrt{3} - 18\big],\]
but it does not lie on the line given by \eqref{arr6_c4}.
\end{proof}

\begin{proposition}
There does not exist an arrangement $\mathcal{C} = C_1 \cup C_2 \cup C_3 \cup L$ of three conics and one line, having the weak-combinatorial type
\[
\mathcal{C}(n_{2},n_{3},t_{3},t_{5},t_{7},d_{6},d_{8}) = (1, 0, 1, 2, 0, 1, 1).\]
\end{proposition}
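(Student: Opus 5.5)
The plan is to pin down the unique B\'ezout constructible ${\rm PIM}(\mathcal{C})$ by hand, reduce to a known free arrangement of three conics via Proposition~\ref{marcin}, and finish by an explicit coordinate check, exactly as in the case $(1,0,0,1,0,1,2)$.

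\textbf{Step 1: the PIM matrix.} The singularities are one $A_1$, one $A_3$, two $A_5$, one $D_6$ and one $D_8$.
\begin{enumerate}
\item[(a)] The $D_8$ point requires a pair of branches with intersection index $3$. Such an index cannot involve the line, so the $D_8$ point is a common point of all three conics, with indices $3,1,1$.
\item[(b)] Each $A_5$ point is a conic--conic contact of index $3$. Since $t_5+d_8=3=\binom{3}{2}$, Proposition~\ref{seven} forces each pair of conics to carry exactly one index-$3$ contact.
\item[(c)] After relabeling, $C_1C_3$ has its $3$ at $D_8$, while $C_1C_2$ and $C_2C_3$ each have $1$ at $D_8$ and $3$ at an $A_5$ point. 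Their rows then sum to $4$, so these two pairs meet nowhere else.
\item[(d)] The pair $C_1C_3$ still needs one transverse point. The line rows must total $6$.
\item[(e)] The $D_6$ point consists of three branches with indices $(2,1,1)$. Since the $A_5$ points are already used, it must be this remaining point of $C_1\cap C_3$, with $L$ tangent to one of $C_1,C_3$ there.
\item[(f)] The $A_3$ point is $L$ tangent to a conic, and the $A_1$ point is $L$ meeting a conic transversally. Bookkeeping the row sums, the tangency at $A_3$ must be to $C_2$. The $A_1$ point is then $L$ meeting whichever of $C_1,C_3$ it is not tangent to at $D_6$.
\end{enumerate}
So up to the symmetry $C_1\leftrightarrow C_3$ there is a single matrix. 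In it $L$ meets $C_1\cup C_2\cup C_3$ in exactly $r=3$ points, namely $D_6$, $A_3$ and $A_1$.

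\textbf{Step 2: deleting the line.} By Proposition~\ref{marcin}, $C_1\cup C_2\cup C_3$ is free. Its singularities are one $A_1$ (the former $D_6$), two $A_5$ and one $D_8$. This is precisely the arrangement of \cite[Proposition 3.24]{MZZ}, so I take its normal form as in the previous proposition. The two conics through both the $D_8$ point $[0:0:1]$ and the transverse point $P=[i\sqrt{3}+3:1:6i\sqrt{3}+6]$ play the roles of $C_1,C_3$; the third conic $12x^2+(i\sqrt3+3)xz-12yz=0$ plays $C_2$.

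\textbf{Step 3: the two candidate lines.} The line $L$ is the tangent line at $P$ either to $x^2-yz=0$ or to $x^2+18(i\sqrt3+3)y^2-18xy-yz=0$. The first tangent is already computed in \eqref{arr6_c4}; the second comes from the gradient at $P$. For each candidate I restrict the equation of $C_2$ to the line, obtaining a binary quadratic. I then check that its discriminant is nonzero, so $L$ is not tangent to $C_2$ and cannot produce the $A_3$ point. I should also verify that the line does not pass through the $A_5$ points, since that would change the singularity types.

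\textbf{Main obstacle.} The real work is in Step~1: certifying that no other B\'ezout constructible distribution exists, for instance $D_6$ placed elsewhere or the $A_1$ point being conic--conic. Step~3 is a routine computation with algebraic numbers. I expect the discriminant to be a nonzero multiple of a power of $(i\sqrt3+3)$, though I would verify this symbolically.
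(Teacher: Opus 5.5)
Your route is the paper's. You use the same PIM (yours becomes the paper's after renaming $C_2\leftrightarrow C_3$), delete $L$ to get the free three-conic arrangement, take the normal form of \cite[Proposition 3.24]{MZZ}, and check that the tangent line at $P$ is not tangent to $12x^2+(i\sqrt{3}+3)xz-12yz=0$. The paper uses freeness tacitly. You justify it correctly via Proposition~\ref{marcin} with $r=3$. This is legitimate because the weak type has $\tau=28=3\cdot 3^2+1$, so $\mathcal{C}$ is automatically maximizing.

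One step of your hand derivation of the PIM is not justified as written. In (a), knowing that the index-$3$ pair at the $D_8$ point consists of two conics does not force the third branch to be a conic; it could be $L$. Your own bookkeeping excludes this case. Suppose $L$ were the third branch, with the index-$3$ pair being $C_1C_3$. After placing the $D_8$ point and the two $A_5$ points, all three conic pairs and the pairs $LC_1$, $LC_3$ have only one unit of intersection left. So the only available index-$2$ contact is between $L$ and $C_2$. But the $A_3$ point and the $D_6$ point are distinct and each needs an index-$2$ contact, which is a contradiction. With this line added, Step 1 is complete.

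Checking both candidate tangent lines is more careful than the paper. The paper tests only the tangent to $x^2-yz=0$ and silently treats the choice of which conic $L$ touches at the $D_6$ point as harmless. Both of your checks succeed. Put $a=3+i\sqrt{3}$, so $a^2=6+6i\sqrt{3}$, $a^3=24\sqrt{3}\,i$ and $P=[a:1:a^2]$.
\begin{itemize}
\item The tangent to $x^2-yz$ at $P$ is $2ax-a^2y-z=0$. Restricting $12x^2+axz-12yz$ to it gives $(12+2a^2)x^2-(a^3+24a)xy+12a^2y^2$, with discriminant $a^4(a^2-48)\neq 0$.
\item The tangent to $x^2+18ay^2-18xy-yz$ at $P$ is $(2a-18)x+(18a-a^2)y-z=0$. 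The restriction is $(-30-6i\sqrt{3})x^2+(252+60i\sqrt{3})xy-(576+144i\sqrt{3})y^2$, with discriminant $-864(7+i\sqrt{3})\neq 0$.
\end{itemize}
The second discriminant is not a multiple of a power of $a$ as you guessed, but it is nonzero, which is all you need. Your extra check that $L$ avoids the $A_5$ points is unnecessary, since non-tangency alone already contradicts the PIM.
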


\begin{proof}
The following ${\rm PIM}(\mathcal{C})$ matrix is the only admissible one which is B\'ezout constructible:
\[
\begin{blockarray}{ccccccc}
 & A_1 & A_3 & A_5 & A_5 & D_6 & D_8 \\
\begin{block}{c|cccccc|}
  C_{1}C_{2} & 0 & 0 & 0 & 0 & 1 & 3 \\
  C_{1}C_{3} & 0 & 0 & 0 & 3 & 0 & 1 \\
  C_{1}L & 0 & 0 & 0 & 0 & 2 & 0 \\
  C_{2}C_{3} & 0 & 0 & 3 & 0 & 0 & 1 \\
  C_{2}L & 1 & 0 & 0 & 0 & 1 & 0 \\
  C_{3}L & 0 & 2 & 0 & 0 & 0 & 0 \\
\end{block}
\end{blockarray}
\]
By deleting $L$ from $\mathcal{C}$, we obtain a free arrangement of three conics with one $A_1$ singularity, two $A_5$ singularities and one $D_8$ singularity. From \cite[Proposition 3.24]{MZZ} we may assume that
\begin{align*}
    &C_1: x^2 - yz = 0, \\
    &C_2: x^2 + 18(i\sqrt{3} + 3)y^2 - 18xy - yz = 0, \\
    &C_3: 12x^2 + (i\sqrt{3} + 3)xz - 12yz = 0.
\end{align*}
According to the ${\rm PIM}(\mathcal{C})$, the line $L$ is tangent to $C_1$ at a point, where $C_1$ and $C_2$ intersect transversally. This point has coordinates
\[\big[i\sqrt{3} + 3 : 1 : 6i\sqrt{3} + 6\big]\]
and therefore the line $L$ is given by
\[L: 2(i\sqrt{3} + 3)x - (6i\sqrt{3} + 6)y - z = 0.\]
However, the obtained line is not tangent to $C_3$, which is a contradiction with the ${\rm PIM}(\mathcal{C})$ matrix.
\end{proof}

\begin{proposition}
There does not exist an arrangement $\mathcal{C} = C_1 \cup C_2 \cup C_3 \cup L$ of three conics and one line, having the weak-combinatorial type
\[
C(n_{2},n_{3},t_{3},t_{5},t_{7},d_{6},d_{8}) = (1, 0, 3, 2, 0, 0, 1).\]
\end{proposition}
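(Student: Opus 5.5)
Weak type $(1,0,3,2,0,0,1)$: one $A_1$, three $A_3$, two $A_5$, one $D_8$. I will follow the same pattern as the preceding propositions. The first step is to pin down the B\'ezout constructible ${\rm PIM}(\mathcal{C})$ matrices, by the computer enumeration already used.

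The $D_8$ point uses an $A_5$ contact between two conics, say $C_1,C_3$, plus a transversal third branch. By Proposition~\ref{seven}, $t_5+d_8=3=\binom{3}{2}$. Hence each pair of conics carries exactly one contact of order $3$, and the remaining conic through the $D_8$ point is the transversal branch. Completing the rows, each conic pair has residual intersection $1$. The line must absorb intersection $2$ with each conic. Four rows sum to $2$ ($C_1L$, $C_2L$, $C_3L$, and the row of the pair carrying the $D_8$), and three $A_3$ columns plus an $A_1$ must fill them. I expect the $A_1$ to be the residual transversal point of two conics and the three $A_3$'s to be the tangencies of $L$ with each $C_i$. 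Then $L$ meets $C_1\cup C_2\cup C_3$ in three points, which is case (1) of Proposition~\ref{marcin}. Deleting $L$ therefore leaves a free arrangement of three conics with one $A_1$, two $A_5$ and one $D_8$. By \cite[Proposition 3.24]{MZZ} these conics are projectively unique and can be written as in the previous two propositions: $x^2-yz$, $x^2+18(i\sqrt3+3)y^2-18xy-yz$, and $12x^2+(i\sqrt3+3)xz-12yz$.

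The key step is then a direct check that no line is tangent to all three conics. I would pass to the dual conics, as in Lemma~\ref{lem: bitangent}, and intersect them. Starting from two conics sharing the order-$3$ contact, Lemma~\ref{lem: bitangent} gives exactly one common tangent line distinct from the one at the contact point. Concretely, with $b=18(i\sqrt3+3)$, $c=-18$ for the pair $x^2-yz$ and the second conic, that lemma yields the explicit line
\[
(32bc-8c^3)x+(4b-c^2)^2y+16c^2z=0 .
\]
The tangent at the contact point $[0:0:1]$, namely $y=0$, cannot serve, since the tangency points with the two conics must be distinct $A_3$ points. I would then verify that this line is not tangent to the third conic: substitute and check that the discriminant of the resulting binary quadratic is nonzero.

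The main obstacle is the PIM enumeration: I must rule out alternative matrices. One alternative is $L$ passing through the $A_1$ as a transversal point of $L$ with a conic, which changes how the $A_3$'s are distributed. In that case the $A_3$'s must include a conic–conic tangency, and I would eliminate it with Proposition~\ref{marcin}, since $r\neq 2,3$ or the $D_6$ requirement fails. Any surviving alternative I would treat by the same coordinatization. The final discriminant computation is routine but involves $i\sqrt3$ arithmetic.
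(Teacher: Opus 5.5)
Your proposal follows the paper's proof essentially step for step. You pin down the unique PIM, in which $L$ is tangent to all three conics. You delete $L$ via case (1) of Proposition~\ref{marcin} to get the free three-conic arrangement in the normal form of \cite{MZZ}. You take the unique bitangent from Lemma~\ref{lem: bitangent} with $b=18(i\sqrt3+3)$, $c=-18$, which normalizes to $84x+(63-42i\sqrt3)y+(8i\sqrt3+12)z=0$, and you check that it is not tangent to the third conic.

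The only blemish is your PIM bookkeeping. Only the conic pair whose order-$3$ contact sits at the $D_8$ point has residual $1$; the other two pairs are filled by an $A_5$ plus the transversal branch at $D_8$. That residual $1$ can only be the $A_1$, since each $A_3$ contributes $2$. So the alternative you worry about, and also the case where $L$ is the transversal branch at $D_8$ (then only one $C_iL$ row has room for a $2$), are excluded by row sums alone, without Proposition~\ref{marcin}.
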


\begin{proof}
The following ${\rm PIM}(\mathcal{C})$ matrix is the only admissible one which is B\'ezout constructible:
\[
\begin{blockarray}{cccccccc}
 & A_1 & A_3 & A_3 & A_3 & A_5 & A_5 & D_8 \\
\begin{block}{c|ccccccc|}
  C_{1}C_{2} & 0 & 0 & 0 & 0 & 3 & 0 & 1 \\
  C_{1}C_{3} & 0 & 0 & 0 & 0 & 0 & 3 & 1 \\
  C_{1}L & 0 & 0 & 0 & 2 & 0 & 0 & 0 \\
  C_{2}C_{3} & 1 & 0 & 0 & 0 & 0 & 0 & 3 \\
  C_{2}L & 0 & 2 & 0 & 0 & 0 & 0 & 0 \\
  C_{3}L & 0 & 0 & 2 & 0 & 0 & 0 & 0 \\
\end{block}
\end{blockarray}
\]
By deleting $L$ from $\mathcal{C}$, we obtain a free arrangement of three conics with one $A_1$ singularity, two $A_5$ singularities and one $D_8$ singularity. From \cite[Proposition 3.23]{MZZ} we may assume that
\begin{align*}
    &C_1: 12x^2 + (i\sqrt{3} + 3)xz - 12yz = 0, \\
    &C_2: x^2 - yz = 0, \\
    &C_3: x^2 + 18(i\sqrt{3} + 3)y^2 - 18xy - yz = 0.
\end{align*}
Based on the ${\rm PIM}(\mathcal{C})$ matrix, the line $L$ has to be tangent to all three conics, at three distinct points. From the proof of Lemma \ref{lem: bitangent} (with $b = 18(i\sqrt{3}+3)$ and $c = -18$), the unique line tangent to $C_2$ and $C_3$ at two different points is given by
\[84x + (63 - 42i\sqrt{3})y + (8i\sqrt{3} + 12)z = 0.\]
However, it can be checked that this line is not tangent to $C_1$.
\end{proof}

\begin{proposition}
There does not exist an arrangement $\mathcal{C} = C_1 \cup C_2 \cup C_3 \cup L$ of three conics and one line, having the weak-combinatorial type
\[
C(n_{2},n_{3},t_{3},t_{5},t_{7},d_{6},d_{8}) = (1, 1, 1, 1, 1, 0, 1).\]
\end{proposition}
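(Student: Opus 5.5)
The plan follows the pattern of the preceding propositions. First I will pin down the admissible ${\rm PIM}(\mathcal{C})$ matrices, then delete the line $L$ and apply Proposition~\ref{marcin}, and finally finish with coordinates. The singularities are $A_1, D_4, A_3, A_5, A_7, D_8$, with $\tau = 1+4+3+5+7+8 = 28$.

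\textbf{Step 1: the PIM matrix.} An $A_7$ point has intersection index $4$, so it can only be a conic--conic tangency, and it uses up a whole pair, say $C_1C_2$. An $A_5$ point has index $3$, so it is also conic--conic, say on $C_2C_3$ after relabeling. The $D_8$ point needs a pair of conics meeting with index $3$; since $C_1C_2$ is exhausted, this pair must be $C_1C_3$, and its third branch (index $1$ with both) is either $L$ or $C_2$. It cannot be $C_2$, because $C_1C_2$ has no room left. Hence $L$ passes through the $D_8$ point, transversally to $C_1$ and $C_3$. The remaining conic--conic intersections are one point of $C_2C_3$ and one point of $C_1C_3$, each with index $1$. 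These must be absorbed by the $A_1$ and the $D_4$ point. The $A_3$ point must then be a tangency of $L$ with some conic, and since $L$ already meets $C_1$ and $C_3$ at the $D_8$ point, the tangency is with $C_2$. Balancing the rows $C_iL$ (each of total $2$) then forces the $D_4$ point to lie on $L$ together with $C_1$ and $C_3$, or on $L$ together with $C_2$ and $C_3$. I will check by the same computer enumeration used above which of these assignments is Bézout constructible; I expect one, possibly two, matrices.

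\textbf{Step 2: delete $L$.} In each surviving matrix I count $r = \#\big((C_1\cup C_2\cup C_3)\cap L\big)$. The value $r=2$ is excluded by Proposition~\ref{marcin}, since it would force two $D_6$ points and there are none. So $r = 3$, and $C_1\cup C_2\cup C_3$ must be free with exponents $(2,3)$. Removing $L$ turns $D_8$ into $A_5$, $D_4$ into $A_1$, $A_3$ into a smooth point and $A_1$ into a smooth point. The three-conic curve therefore has an $A_7$, two $A_5$ points and one $A_1$. Its Tjurina number is $7+5+5+1=18$, which is indeed consistent with freeness with exponents $(2,3)$.

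\textbf{Step 3: conclude.} I will look up this arrangement in the classification \cite[Theorem A]{MZZ}. If it does not appear there, we are done immediately. If it does, I will take the normal form from \cite{MZZ}. The line $L$ is then determined by two points: the $A_5$ point of $C_1C_3$ (the future $D_8$) and the appropriate $A_1$ point (the future $D_4$). Finally I will check whether this line is tangent to $C_2$, via the discriminant of the restricted quadratic, as in the earlier propositions.

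The main obstacle is Step 3 in the case where \cite{MZZ} does provide such a free arrangement, possibly with a parameter. Then the tangency condition becomes an honest polynomial equation, and I would have to show that it has no admissible root. That is, every root must force a degenerate configuration, such as $c=0$ or a collision of singular points.
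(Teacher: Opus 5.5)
Your overall route is the paper's: the unique B\'ezout constructible ${\rm PIM}(\mathcal{C})$, deleting $L$ and applying Proposition~\ref{marcin} to get a free curve of three conics, the normal form from \cite{MZZ}, the line through the $D_8$ and $D_4$ points, and a tangency test. Your Step~1 reproduces the paper's matrix after swapping $C_2$ and $C_3$. The option ``$D_4$ on $C_2,C_3,L$'' is impossible, because the row $C_2L$ is already used up by the $A_3$ tangency, so the matrix is unique.

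The real problem is Step~2. By your own Step~1, the $A_1$ point is the transversal point of $C_2\cap C_3$. It is not on $L$, so it survives the deletion. Hence $C_1\cup C_2\cup C_3$ has an $A_7$, two $A_5$ and \emph{two} $A_1$ points, and $\tau=19$. Moreover, $18$ is not consistent with exponents $(2,3)$: freeness in degree $6$ forces $\tau=25-2\cdot 3=19$. This slip is not cosmetic. The configuration you wrote down, $A_7+2A_5+A_1$ on three conics, is not even B\'ezout constructible ($4+3+3+1=11\neq 12$), so it cannot appear in \cite[Theorem A]{MZZ}. Your branch ``if it does not appear there, we are done immediately'' would then produce a spurious proof.

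With the correct list, the curve does appear (\cite[Proposition 3.18]{MZZ}), and with a rigid normal form. So the parametric obstacle you anticipate never arises. However, the one computation that carries the whole proof is left undone in your proposal. In your labeling one may take $C_1: x^2-yz$, $C_2: x^2+i\sqrt{3}z^2-yz$ and $C_3: 8x^2-3(i\sqrt{3}+1)y^2+8i\sqrt{3}xy-8yz$. Then $C_1\cap C_3=\{[0:0:1],[2i\sqrt{3}+6:8i\sqrt{3}+8:3]\}$, so
$L:(8i\sqrt{3}+8)x-(2i\sqrt{3}+6)y=0$, i.e.\ $y=\bigl(2+\tfrac{2i\sqrt{3}}{3}\bigr)x$. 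On $L$, the equation of $C_2$ becomes $x^2-\bigl(2+\tfrac{2i\sqrt{3}}{3}\bigr)xz+i\sqrt{3}z^2$. Its discriminant $\tfrac{8-4i\sqrt{3}}{3}$ is nonzero, so $L$ is not tangent to $C_2$, contradicting the $A_3$ entry. This is exactly the paper's argument.

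Since $C_1$ and $C_2$ play symmetric roles in the three-conic curve, for a complete argument you should also test the other matching of your conics to the normal form. That matching fails too. The line $(2+2i\sqrt{3})x+(i\sqrt{3}-3)y+4z=0$ through the two points of $C_2\cap C_3$ meets $C_1$, via the substitution $[t:t^2:1]$, in a quadratic with discriminant $40-8i\sqrt{3}\neq 0$, so it is not tangent to $C_1$.
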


\begin{proof}
The following ${\rm PIM}(\mathcal{C})$ matrix is the only admissible one which is B\'ezout constructible:
\[
\begin{blockarray}{ccccccc}
 & A_1 & D_4 & A_3 & A_5 & A_7 & D_8 \\
\begin{block}{c|cccccc|}
  C_{1}C_{2} & 0 & 1 & 0 & 0 & 0 & 3 \\
  C_{1}C_{3} & 0 & 0 & 0 & 0 & 4 & 0 \\
  C_{1}L & 0 & 1 & 0 & 0 & 0 & 1 \\
  C_{2}C_{3} & 1 & 0 & 0 & 3 & 0 & 0 \\
  C_{2}L & 0 & 1 & 0 & 0 & 0 & 1 \\
  C_{3}L & 0 & 0 & 2 & 0 & 0 & 0 \\
\end{block}
\end{blockarray}
\]
By deleting the line $L$ from $\mathcal{C}$, we obtain a free arrangement of three conics with two $A_1$ singularities, two $A_5$ singularities and one $A_7$ singularity. From \cite[Proposition 3.18]{MZZ} we may assume that
\begin{align*}
    &C_1: x^2 - yz = 0, \\
    &C_2: 8x^2 - 3(i\sqrt{3}+1)y^2 + 8i\sqrt{3}xy - 8yz = 0, \\
    &C_3: x^2 + i\sqrt{3}z^2 - yz = 0.
\end{align*}
From the ${\rm PIM}(\mathcal{C})$ matrix, the line $L$ has to pass through both points of intersection of $C_1$ and $C_2$. These two points have coordinates $[0 : 0 : 1]$ and $\big[2i\sqrt{3}+6 : 8i\sqrt{3} + 8 : 3 \big]$ and therefore the line $L$ is given by
\[L : (8i\sqrt{3}+8)x - (2i\sqrt{3} + 6)y = 0.\]
However, this line is not tangent to $C_3$, which is a contradiction with the ${\rm PIM}(\mathcal{C})$ matrix.
\end{proof}

\begin{proposition}
There does not exist an arrangement $\mathcal{C} = C_1 \cup C_2 \cup C_3 \cup L$ of three conics and one line, having the weak-combinatorial type
\[
C(n_{2},n_{3},t_{3},t_{5},t_{7},d_{6},d_{8}) = (2,0,0,1,1,1,1).\]
\end{proposition}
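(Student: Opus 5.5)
The plan is the one used for the preceding cases of Scenario 3: fix the combinatorics through the ${\rm PIM}(\mathcal{C})$ matrix, delete the line $L$, and use Proposition \ref{marcin} together with the classification of free arrangements of three conics in \cite{MZZ}.

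\textbf{Step 1: the PIM matrix.} I first determine all B\'ezout constructible ${\rm PIM}(\mathcal{C})$ matrices for the type $(2,0,0,1,1,1,1)$. The columns are $A_1, A_1, A_5, A_7, D_6, D_8$, and the rows are $C_1C_2, C_1C_3, C_2C_3, C_1L, C_2L, C_3L$. The constraints are as follows.
\begin{itemize}
\item Every conic pair has row sum $4$, and every conic--line pair has row sum $2$.
\item The $A_7$ must be an intersection of two conics with multiplicity $4$, which uses up that whole row.
\item The $A_5$ and the $A_5$ building block of the $D_8$ need conic pairs with multiplicity $3$. Their rows can only be completed by a transversal point.
\item The $D_6$ consists of three branches with pairwise multiplicities $2,1,1$.
\end{itemize}
Up to relabelling, I expect this to pin down essentially one matrix. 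Take $C_1C_3$ to carry the $A_7$. Then the $D_8$ and the $A_5$ occupy $C_1C_2$ and $C_2C_3$. The third branch of the $D_8$, and the transversal points completing these rows, are forced to involve $L$ and the remaining singularities ($D_6$, $A_1$). From the matrix I read off $r=\#\big((C_1\cup C_2\cup C_3)\cap L\big)$.

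\textbf{Step 2: apply Proposition \ref{marcin}.}
\begin{itemize}
\item If $r\ge 4$, Proposition \ref{marcin} gives an immediate contradiction.
\item If $r=2$, Proposition \ref{marcin} forces both points of $L\cap C$ to be $D_6$ points at which $L$ is tangent to the same component. This is impossible, since $d_6=1$.
\item The interesting case is $r=3$. Then $C_1\cup C_2\cup C_3$ is a free arrangement of three conics with exponents $(2,3)$.
\end{itemize}

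\textbf{Step 3: the case $r=3$.} I compute the singularities of $C_1\cup C_2\cup C_3$ from the matrix by removing the $L$-branches: the $D_8$ becomes $A_5$ or $D_4$-type data, and the $D_6$ becomes $A_1$ or $A_3$. I then check this weak type against \cite[Theorem A]{MZZ}.
\begin{itemize}
\item If it is absent from that list, we are done.
\item If it is present, I take the explicit normal form from the relevant proposition in \cite{MZZ}. I expect $A_1, A_5, A_7$-type data here, so the normal form would be one of those with $C_1: x^2-yz$ and similar equations. I then compute the relevant special points: the transversal points of conic pairs and the points of tangency. The matrix imposes conditions on $L$: it passes through two prescribed points, or it is tangent to a prescribed conic at a prescribed point. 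These conditions determine $L$ uniquely, and the contradiction comes from checking that this $L$ violates a remaining condition, either non-collinearity or a nonzero discriminant of the restricted quadratic.
\end{itemize}

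\textbf{Main obstacle.} The hardest step is Step 1: enumerating the admissible matrices correctly, and in particular deciding where the $D_6$ branches and the two $A_1$ points sit. A mistake there would give the wrong $r$. If more than one matrix survives, each must be run through Steps 2--3 separately. A secondary difficulty in Step 3 is identifying which normal form in \cite{MZZ} to use, and then computing explicitly. If no normal form applies, I would fall back to the coordinatization used for $(0,0,1,1,0,2,1)$. That means starting from $C_1: x^2-yz$, with the partner conic in the form $x^2+by^2+cxy-yz$ (or $\lambda C_1+\ell^2$ for the $A_7$ pair), imposing the incidences, and reaching a nonzero multiple of $c^2$ as a forced zero.
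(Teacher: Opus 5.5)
Your strategy is the paper's own: find the unique ${\rm PIM}(\mathcal{C})$, get $r=3$, use Proposition~\ref{marcin} to make $C_1\cup C_2\cup C_3$ free, take the normal form from \cite{MZZ}, and derive an incidence contradiction. Your Step~2 case analysis on $r$ is correct. The gap is that the two steps carrying all the content are only announced (``I expect'', ``the contradiction comes from checking''). Here the contradiction is not structural: it is a numerical non-vanishing in a specific normal form, so it has to be computed.

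Step~1 closes quickly. Put the $A_7$ on $C_1C_2$; that row is then full. The order-$3$ pairs of the $A_5$ and of the $D_8$ must then be $C_1C_3$ and $C_2C_3$, say in this order. The third branch of the $D_8$ must be $L$. For the $D_6$, any triple containing both $C_1$ and $C_2$ is excluded. The triple $\{C_2,C_3,L\}$ is also excluded, since its leftover budgets are $1,1,1$ and leave no room for a multiplicity~$2$. So the $D_6$ is $\{C_1,C_3,L\}$ with $i(C_1,L)=2$, and the two $A_1$ fill $C_2C_3$ and $C_2L$. Hence the matrix is unique and $r=3$. The residual arrangement $C_1\cup C_2\cup C_3$ has type $2A_1+2A_5+A_7$, with $\tau=19$ as exponents $(2,3)$ require. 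Also, deleting one branch of a $D_8$ leaves $A_5$ or $A_1$, never $D_4$.

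For Step~3 you must actually use \cite[Proposition 3.18]{MZZ}:
$C_1: x^2-yz=0$, $C_2: x^2+i\sqrt{3}z^2-yz=0$, $C_3: 8x^2-3(i\sqrt{3}+1)y^2+8i\sqrt{3}xy-8yz=0$.
\begin{itemize}
\item The transversal point of $C_1\cap C_3$ is $[2i\sqrt{3}+6:8i\sqrt{3}+8:3]$. The tangent to $C_1$ there is $(4i\sqrt{3}+12)x-3y-(8i\sqrt{3}+8)z=0$.
\item Substituting $[t:t^2+i\sqrt{3}:1]$ into $C_3$ gives a quartic with a triple root at $t=1$. So the order-$3$ contact point of $C_2\cap C_3$ (the $D_8$ point) is $[1:1+i\sqrt{3}:1]$.
\item The line evaluates there to $1-7i\sqrt{3}\neq 0$, so it misses the $D_8$ point.
\end{itemize}
In this normal form both conics of the $A_7$ pair meet $C_3$ in $A_5+A_1$. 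You should therefore also rule out the swapped identification. The transversal point of $C_2\cap C_3$ is at $t_0=(2i\sqrt{3}-3)/3$, and the tangent to $C_2$ there passes through $[0:0:1]$ only if $t_0^2=i\sqrt{3}$, which fails. With these computations the argument is complete.
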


\begin{proof}
The following ${\rm PIM}(\mathcal{C})$ matrix is the only admissible one which is B\'ezout constructible:
\[
\begin{blockarray}{ccccccc}
 & A_1 & A_1 & A_5 & A_7 & D_6 & D_8 \\
\begin{block}{c|cccccc|}
  C_{1}C_{2} & 0 & 0 & 0 & 4 & 0 & 0 \\
  C_{1}C_{3} & 0 & 0 & 3 & 0 & 1 & 0 \\
  C_{1}L & 0 & 0 & 0 & 0 & 2 & 0 \\
  C_{2}C_{3} & 0 & 1 & 0 & 0 & 0 & 3 \\
  C_{2}L & 1 & 0 & 0 & 0 & 0 & 1 \\
  C_{3}L & 0 & 0 & 0 & 0 & 1 & 1 \\
\end{block}
\end{blockarray}
\]
By deleting the line $L$ from $\mathcal{C}$, we obtain a free arrangement of three conics with two $A_1$ singularities, two $A_5$ singularities and one $A_7$ singularity. From \cite[Proposition 3.18]{MZZ} we may assume that
\begin{align*}
    &C_1: x^2 - yz = 0, \\
    &C_2: x^2 + i\sqrt{3}z^2 - yz = 0, \\
    &C_3: 8x^2 - 3(i\sqrt{3}+1)y^2 + 8i\sqrt{3}xy - 8yz = 0.
\end{align*}
Based on the ${\rm PIM}(\mathcal{C})$ matrix, the line $L$ is tangent to $C_1$ at the point in which $C_1$ intersects $C_3$ with multiplicity $1$. This point has coordinates $\big[2i\sqrt{3} + 6 : 8i\sqrt{3} + 8 : 3\big]$ and therefore the line $L$ is given by
\begin{equation}\label{arr10_c4}
L : (4i\sqrt{3} + 12)x - 3y - (8i\sqrt{3} + 8)z = 0.
\end{equation}
However, from the ${\rm PIM}(\mathcal{C})$ matrix, the line $L$ passes through the point in which $C_2$ and $C_3$ intersect with multiplicity $3$, i.e.\ the point $\big[1 : u\sqrt{3} + 1 : 1\big]$. It can be checked that the line given by \eqref{arr10_c4} does not pass through this point.
\end{proof}

Combining the results proved above, we arrive at the following proposition.

\begin{proposition}\label{result3}
There is no maximizing arrangement of three conics and one line admitting the singularities listed in Assumption~$(\square)$.
\end{proposition}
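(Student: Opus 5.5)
The plan is to assemble the case analysis carried out in Scenario 3. First, a maximizing arrangement of three conics and one line has degree $7$, so $m=3$ and $\tau(\mathcal{C})=28$. Together with the count of intersection data ($\binom{7}{2}-k=18$), this gives exactly the $77$ naive weak-combinatorial types listed in the Appendix with $k=3$, $d=1$.

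Next I would cut this list down. Theorem~\ref{main} applies with $m=3$: any type with $d_8=0$ has only singularities $A_t$ with $t\le 7\le 12$ and $D_l$ with $l\le 6\le 7$, so it cannot be maximizing. Theorem~\ref{HP} applies with $k=3$, $d=1$. Proposition~\ref{seven} gives $t_5+t_7+d_8\le 3$. After these filters we are left with the $32$ types in \eqref{lc1}. For each of them, the computer enumeration of B\'ezout constructible $\mathrm{PIM}(\mathcal{C})$ matrices leaves only the $12$ listed types; every other type already fails B\'ezout constructibility, so it is not realizable.

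The remaining $12$ types are then excluded one by one by the propositions above:
\begin{itemize}
\item The four types whose unique matrix has $|(C_1\cup C_2\cup C_3)\cap L|=3$ are ruled out via Proposition~\ref{marcin} and the classification of free conic triples in \cite{MZZ}.
\item The type $(0,0,1,1,0,2,1)$ is ruled out by direct coordinatization, using Lemma~\ref{lem: a5}.
\item The type $(0,0,4,0,0,0,2)$ is ruled out using Lemma~\ref{lem: bitangent}.
\item The types $(0,1,0,0,0,0,3)$, $(1,0,0,1,0,1,2)$, $(1,0,1,2,0,1,1)$, $(1,0,3,2,0,0,1)$, $(1,1,1,1,1,0,1)$ and $(2,0,0,1,1,1,1)$ are ruled out by deleting $L$. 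Since $L$ meets the conics in exactly $3=m$ points, Corollary~\ref{ell} shows that the three conics form a free arrangement. Each is then normalized using the explicit normal forms from \cite{MZZ}, and a collinearity or tangency condition on $L$ fails.
\end{itemize}
Since every admissible type is excluded, no maximizing arrangement exists.

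The main obstacle is completeness. I need to check that the $12$ remaining types are exactly the ones handled, and that in each of the "delete $L$" cases the hypothesis of Corollary~\ref{ell} really forces freeness, i.e.\ $r=3$. That requires confirming from the matrix that $L$ meets the conics in exactly three distinct points. This holds by inspection of the displayed matrices, since each line row has three distinct nonzero columns or one tangency plus another point appropriately. The normal forms cited from \cite{MZZ} must also cover all projective realizations of each free conic triple. I would rely on those cited classifications as stated.
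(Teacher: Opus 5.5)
Your proposal is correct and follows the paper's own argument: filtering by Theorems~\ref{HP} and~\ref{main}, then the computer check of B\'ezout constructibility down to $12$ types, and then the same case-by-case exclusions (Proposition~\ref{marcin} with the classification of free conic triples, the coordinatization via Lemma~\ref{lem: a5}, and deletion of $L$, where $r=3=m$ forces the conic triple to be free before the normal forms are used). One harmless inaccuracy is that the paper reaches the $32$ types in \eqref{lc1} using only Theorems~\ref{HP} and~\ref{main}; several of these, e.g.\ $(1,1,0,3,0,0,1)$ and $(2,0,0,2,0,0,2)$, violate the bound $t_5+t_7+d_8\le 3$ of Proposition~\ref{seven}, so adding that filter leaves a proper subset rather than exactly these $32$, and the B\'ezout check removes them either way.
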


Based on the results presented in Propositions \ref{result1}, \ref{result2} and \ref{result3}, we can now state the main theorem of this paper.

\begin{mtheorem}\label{maintheorem}
There is no conic-line arrangement $C \subset \mathbb{P}^2_{\mathbb{C}}$ of degree seven admitting $A_t$ singularities with $t \in \{1,3,5,7\}$ and $D_l$ singularities with $l \in \{4,6,8\}$. 
\end{mtheorem}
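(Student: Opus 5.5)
The Main Theorem has to be read as asserting that no \emph{maximizing} conic-line arrangement of degree seven has only the listed singularities. The plan is to reduce it to the three scenario propositions already proved. The first step is the reduction to arrangements consisting of $k\geq 1$ conics and $d\geq 1$ lines with $2k+d=7$. Since $d$ is odd, $d\geq 1$ holds automatically. If $k=0$ the curve is a line arrangement with only double and triple points; as recalled in the Introduction, a maximizing line arrangement has degree at most $6$, so this case is excluded. The admissible splittings are therefore $(k,d)\in\{(1,5),(2,3),(3,1)\}$. In all three the irreducible components are smooth, and the singularities permitted by the theorem are exactly those of Assumption~$(\square)$, because conic-line arrangements cannot produce $D_5$ or $D_7$.

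The second step is to assume $\mathcal{C}$ is maximizing, so that $\tau(\mathcal{C})=3\cdot 3^2+1=28$. Its weak combinatorial type must then solve the Diophantine system at the start of Section~3, where the second equation records the count $\binom{7}{2}-k$ from the genus/Bézout identity for smooth components meeting in the given local types. Hence the type appears among the $296$ listed types, and it falls into exactly one of Scenarios~1--3 according to $(k,d)$. Proposition~\ref{result1} excludes the case $(1,5)$, Proposition~\ref{result2} excludes $(2,3)$, and Proposition~\ref{result3} excludes $(3,1)$. Each of these propositions was obtained by the same pipeline: Theorem~\ref{main} forces at least one $D_8$ point (indeed some singularity beyond its range, and $D_8$ is the only one available for $m=3$), Theorem~\ref{HP} and Proposition~\ref{seven} prune the remaining types, Bézout-constructible PIM matrices filter further, and the surviving cases are ruled out by Proposition~\ref{marcin} or by coordinatization.

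The main obstacle is not in this final assembly but in making sure the case split is exhaustive. The points to verify are that the list of $296$ types is complete for the Diophantine system, and that no configuration escapes the PIM enumeration; for instance, one must confirm that a singular point lying on components that do not meet pairwise is still correctly accounted for. I would double-check that the second equation is valid: it follows from the formula for the total $\delta$-invariant, $\sum \delta_p = \binom{6}{2}+\ldots$, i.e.\ $\sum_p \delta_p = \sum_{i<j}\deg C_i\deg C_j = 21-k$ for smooth components, using $\delta(A_{2s+1})=s+1$ and the corresponding values for the $D$-types. With this verified, the theorem follows by combining the three propositions.
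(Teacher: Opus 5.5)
Your proposal is correct and follows the paper's own argument. You read the statement as concerning maximizing arrangements, which is the intended reading, split by $(k,d)\in\{(1,5),(2,3),(3,1)\}$, and invoke Propositions~\ref{result1}, \ref{result2} and \ref{result3}; this is exactly how the paper assembles the Main Theorem. Your additional checks are consistent with the paper and only make explicit what it leaves implicit: the exclusion of $k=0$ via the degree bound for maximizing line arrangements, and the $\delta$-invariant derivation of $\sum_p \delta_p = 21-k$.
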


\section*{Acknowledgements}

We would like to thank Piotr Pokora for valuable suggestions and insightful comments, as well as for their support throughout the preparation of this paper.

\vspace{20pt}
Affiliation of the authors: Department of Mathematics, University of the National Education Commission, Krakow, Podchor\k{a}\.{z}ych 2, PL-30-084 Krak\'{o}w, Poland.

\medskip

\L{}ukasz Merta: \texttt{lukasz.merta@uken.krakow.pl}

Marcin Zieli\'nski: \texttt{marcin.zielinski@uken.krakow.pl}

\newpage
\section*{Appendix: Enumeration of all admissible weak-combinatorics}
\paragraph{One conic and five lines.}
We have $103$ possible weak-combinatorial types:
\begin{multline*}
C(n_{2},n_{3},t_{3},t_{5},t_{7},d_{6},d_{8}) \in
\bigg\{ (0,4,0,0,0,2,0),(0,4,2,0,0,1,0),(0,4,4,0,0,0,0), \\ 
(0,5,0,0,0,0,1), (0,5,1,1,0,0,0), (1,3,1,0,0,2,0), (1,3,3,0,0,1,0), (1,3,5,0,0,0,0), \\ 
(1,4,0,1,0,1,0), (1,4,1,0,0,0,1), (1,4,2,1,0,0,0), (1,5,0,0,1,0,0),(2,2,0,0,0,3,0), \\ 
(2,2,2,0,0,2,0), (2,2,4,0,0,1,0), (2,2,6,0,0,0,0), (2,3,0,0,0,1,1), (2,3,1,1,0,1,0), \\ 
(2,3,2,0,0,0,1), (2,3,3,1,0,0,0), (2,4,0,2,0,0,0), (2,4,1,0,1,0,0),(3,1,1,0,0,3,0), \\ 
(3,1,3,0,0,2,0),(3,1,5,0,0,1,0),(3,1,7,0,0,0,0),(3,2,0,1,0,2,0),(3,2,1,0,0,1,1), \\ 
(3,2,2,1,0,1,0),(3,2,3,0,0,0,1),(3,2,4,1,0,0,0),(3,3,0,0,1,1,0),(3,3,0,1,0,0,1), \\ 
(3,3,1,2,0,0,0),(3,3,2,0,1,0,0),(4,0,0,0,0,4,0),(4,0,2,0,0,3,0),(4,0,4,0,0,2,0), \\ 
(4,0,6,0,0,1,0),(4,0,8,0,0,0,0),(4,1,0,0,0,2,1),(4,1,1,1,0,2,0),(4,1,2,0,0,1,1), \\ 
(4,1,3,1,0,1,0),(4,1,4,0,0,0,1),(4,1,5,1,0,0,0),(4,2,0,0,0,0,2),(4,2,0,2,0,1,0), \\ 
(4,2,1,0,1,1,0),(4,2,1,1,0,0,1),(4,2,2,2,0,0,0),(4,2,3,0,1,0,0),(4,3,0,1,1,0,0), \\ 
(5,0,0,1,0,3,0),(5,0,1,0,0,2,1),(5,0,2,1,0,2,0),(5,0,3,0,0,1,1),(5,0,4,1,0,1,0), \\
 (5,0,5,0,0,0,1),(5,0,6,1,0,0,0),(5,1,0,0,1,2,0),(5,1,0,1,0,1,1),(5,1,1,0,0,0,2), \\
 (5,1,1,2,0,1,0),(5,1,2,0,1,1,0),(5,1,2,1,0,0,1),(5,1,3,2,0,0,0),(5,1,4,0,1,0,0), \\ 
 (5,2,0,0,1,0,1),(5,2,0,3,0,0,0),(5,2,1,1,1,0,0),(6,0,0,0,0,1,2),(6,0,0,2,0,2,0), \\ 
 (6,0,1,0,1,2,0),(6,0,1,1,0,1,1),(6,0,2,0,0,0,2),(6,0,2,2,0,1,0),(6,0,3,0,1,1,0), \\ 
 (6,0,3,1,0,0,1),(6,0,4,2,0,0,0),(6,0,5,0,1,0,0),(6,1,0,1,1,1,0),(6,1,0,2,0,0,1), \\ 
 (6,1,1,0,1,0,1),(6,1,1,3,0,0,0),(6,1,2,1,1,0,0),(6,2,0,0,2,0,0),(7,0,0,0,1,1,1), \\ 
 (7,0,0,1,0,0,2),(7,0,0,3,0,1,0),(7,0,1,1,1,1,0),(7,0,1,2,0,0,1),(7,0,2,0,1,0,1), \\ 
 (7,0,2,3,0,0,0),(7,0,3,1,1,0,0),(7,1,0,2,1,0,0),(7,1,1,0,2,0,0),(8,0,0,0,2,1,0), \\ 
 (8,0,0,1,1,0,1),(8,0,0,4,0,0,0),(8,0,1,2,1,0,0),(8,0,2,0,2,0,0),(9,0,0,1,2,0,0) \bigg\}.  
\end{multline*}

\paragraph{Two conics and three lines.}

We have $116$ possible weak-combinatorial types:
\begin{multline*}
C(n_{2},n_{3},t_{3},t_{5},t_{7},d_{6},d_{8}) \in \bigg\{ (0,1,0,0,0,4,0),(0,1,2,0,0,3,0),(0,1,4,0,0,2,0), \\(0,1,6,0,0,1,0),(0,1,8,0,0,0,0),(0,2,0,0,0,2,1),(0,2,1,1,0,2,0), (0,2,2,0,0,1,1), \\
(0,2,3,1,0,1,0),(0,2,4,0,0,0,1), (0,2,5,1,0,0,0), 
(0,3,0,0,0,0,2),(0,3,0,2,0,1,0), \\
(0,3,1,0,1,1,0),(0,3,1,1,0,0,1),(0,3,2,2,0,0,0),(0,3,3,0,1,0,0),(0,4,0,1,1,0,0), \\
(1,0,1,0,0,4,0),(1,0,3,0,0,3,0),(1,0,5,0,0,2,0),(1,0,7,0,0,1,0),(1,0,9,0,0,0,0), \\
(1,1,0,1,0,3,0),(1,1,1,0,0,2,1),(1,1,2,1,0,2,0),(1,1,3,0,0,1,1),(1,1,4,1,0,1,0), \\
(1,1,5,0,0,0,1),(1,1,6,1,0,0,0),(1,2,0,0,1,2,0),(1,2,0,1,0,1,1),(1,2,1,0,0,0,2), \\
(1,2,1,2,0,1,0),(1,2,2,0,1,1,0),(1,2,2,1,0,0,1),(1,2,3,2,0,0,0),(1,2,4,0,1,0,0), \\
(1,3,0,0,1,0,1),(1,3,0,3,0,0,0),(1,3,1,1,1,0,0),(2,0,0,0,0,3,1),(2,0,1,1,0,3,0), \\
(2,0,2,0,0,2,1),(2,0,3,1,0,2,0),(2,0,4,0,0,1,1),(2,0,5,1,0,1,0),(2,0,6,0,0,0,1), \\
(2,0,7,1,0,0,0),(2,1,0,0,0,1,2),(2,1,0,2,0,2,0),(2,1,1,0,1,2,0),(2,1,1,1,0,1,1),\\
(2,1,2,0,0,0,2),(2,1,2,2,0,1,0),(2,1,3,0,1,1,0),(2,1,3,1,0,0,1),(2,1,4,2,0,0,0),\\
(2,1,5,0,1,0,0),(2,2,0,1,1,1,0),(2,2,0,2,0,0,1),(2,2,1,0,1,0,1),(2,2,1,3,0,0,0),\\
(2,2,2,1,1,0,0),(2,3,0,0,2,0,0),(3,0,0,0,1,3,0),(3,0,0,1,0,2,1),(3,0,1,0,0,1,2),\\
(3,0,1,2,0,2,0),(3,0,2,0,1,2,0),(3,0,2,1,0,1,1),(3,0,3,0,0,0,2),(3,0,3,2,0,1,0),\\
(3,0,4,0,1,1,0),(3,0,4,1,0,0,1),(3,0,5,2,0,0,0),(3,0,6,0,1,0,0),(3,1,0,0,1,1,1),\\
(3,1,0,1,0,0,2),(3,1,0,3,0,1,0),(3,1,1,1,1,1,0),(3,1,1,2,0,0,1),(3,1,2,0,1,0,1),\\
(3,1,2,3,0,0,0),(3,1,3,1,1,0,0),(3,2,0,2,1,0,0),(3,2,1,0,2,0,0),(4,0,0,0,0,0,3),\\
(4,0,0,1,1,2,0),(4,0,0,2,0,1,1),(4,0,1,0,1,1,1),(4,0,1,1,0,0,2),(4,0,1,3,0,1,0),\\
(4,0,2,1,1,1,0),(4,0,2,2,0,0,1),(4,0,3,0,1,0,1),(4,0,3,3,0,0,0),(4,0,4,1,1,0,0),\\
(4,1,0,0,2,1,0),(4,1,0,1,1,0,1),(4,1,0,4,0,0,0),(4,1,1,2,1,0,0),(4,1,2,0,2,0,0),\\
(5,0,0,0,1,0,2),(5,0,0,2,1,1,0),(5,0,0,3,0,0,1),(5,0,1,0,2,1,0),(5,0,1,1,1,0,1),\\
(5,0,1,4,0,0,0),(5,0,2,2,1,0,0),(5,0,3,0,2,0,0),(5,1,0,1,2,0,0),(6,0,0,0,2,0,1),\\
(6,0,0,3,1,0,0),(6,0,1,1,2,0,0),(7,0,0,0,3,0,0) \bigg\}. 
 \end{multline*}

\paragraph{Three conics and one line.}

We have $77$ possible weak-combinatorial types:
\begin{multline*}
C(n_{2},n_{3},t_{3},t_{5},t_{7},d_{6},d_{8}) \in \bigg\{
(0,0,0,0,0,2,2),(0,0,0,2,0,3,0),(0,0,1,0,1,3,0),\\(0,0,1,1,0,2,1),
(0,0,2,0,0,1,2),(0,0,2,2,0,2,0),(0,0,3,0,1,2,0),(0,0,3,1,0,1,1),\\(0,0,4,0,0,0,2),
(0,0,4,2,0,1,0),(0,0,5,0,1,1,0),(0,0,5,1,0,0,1),(0,0,6,2,0,0,0),\\(0,0,7,0,1,0,0),
(0,1,0,0,0,0,3),(0,1,0,1,1,2,0),(0,1,0,2,0,1,1),(0,1,1,0,1,1,1),\\(0,1,1,1,0,0,2),
(0,1,1,3,0,1,0),(0,1,2,1,1,1,0),(0,1,2,2,0,0,1),(0,1,3,0,1,0,1),\\(0,1,3,3,0,0,0),
(0,1,4,1,1,0,0),(0,2,0,0,2,1,0),(0,2,0,1,1,0,1),(0,2,0,4,0,0,0),\\(0,2,1,2,1,0,0),
(0,2,2,0,2,0,0),(1,0,0,0,1,2,1),(1,0,0,1,0,1,2),(1,0,0,3,0,2,0),\\(1,0,1,0,0,0,3),
(1,0,1,1,1,2,0),(1,0,1,2,0,1,1),(1,0,2,0,1,1,1),(1,0,2,1,0,0,2),\\(1,0,2,3,0,1,0),
(1,0,3,1,1,1,0),(1,0,3,2,0,0,1),(1,0,4,0,1,0,1),(1,0,4,3,0,0,0),\\(1,0,5,1,1,0,0),
(1,1,0,0,1,0,2),(1,1,0,2,1,1,0),(1,1,0,3,0,0,1),(1,1,1,0,2,1,0),\\(1,1,1,1,1,0,1),
(1,1,1,4,0,0,0),(1,1,2,2,1,0,0),(1,1,3,0,2,0,0),(1,2,0,1,2,0,0),\\(2,0,0,0,2,2,0),
(2,0,0,1,1,1,1),(2,0,0,2,0,0,2),(2,0,0,4,0,1,0),(2,0,1,0,1,0,2),\\(2,0,1,2,1,1,0),
(2,0,1,3,0,0,1),(2,0,2,0,2,1,0),(2,0,2,1,1,0,1),(2,0,2,4,0,0,0),\\(2,0,3,2,1,0,0),
(2,0,4,0,2,0,0),(2,1,0,0,2,0,1),(2,1,0,3,1,0,0),(2,1,1,1,2,0,0),\\(3,0,0,1,2,1,0),
(3,0,0,2,1,0,1),(3,0,0,5,0,0,0),(3,0,1,0,2,0,1),(3,0,1,3,1,0,0),\\(3,0,2,1,2,0,0),
(3,1,0,0,3,0,0),(4,0,0,2,2,0,0),(4,0,1,0,3,0,0) \bigg\}. 
\end{multline*}

\end{document}